\documentclass[12pt]{amsart}

\usepackage{mathrsfs, amssymb,amsthm, amsfonts, amsmath}
\usepackage[all]{xy}
\usepackage{upgreek}
\usepackage{amsmath}

\usepackage{tabularx,multirow,makecell}
\newcolumntype{Y}{>{\centering\arraybackslash}X}
\usepackage{setspace}
\sloppy \pagestyle{plain}

\textwidth=16cm \textheight=23cm \oddsidemargin=0cm
\evensidemargin=0cm \topmargin=-20pt

\pagenumbering{arabic}

\newtheorem{theorem}[equation]{Theorem}
\newtheorem*{theorem*}{Theorem}
\newtheorem{lemma}[equation]{Lemma}
\newtheorem{corollary}[equation]{Corollary}

\newtheorem{proposition}[equation]{Proposition}

\theoremstyle{definition}

\newtheorem{definition}[equation]{Definition}
\newtheorem*{definition*}{Definition}

\theoremstyle{remark}
\newtheorem{remark}[equation]{Remark}

\makeatletter\@addtoreset{equation}{section}
\makeatother


\usepackage{textcomp}
\usepackage{hyperref}

\textwidth=16cm \textheight=23cm \oddsidemargin=0cm
\evensidemargin=0cm \topmargin=-20pt

\newcommand{\CC}{\mathbb{C}}

\newcommand{\QQ}{\mathbb{Q}}

\newcommand{\ZZ}{\mathbb{Z}}
\newcommand{\PP}{\mathbb{P}}

\newcommand{\OOO}{{\mathscr{O}}}

\newcommand{\KKK}{{\mathscr{K}}}

\newcommand{\chit}{\chi_{\mathrm{top}}}
\newcommand{\ad}{\mathrm{a}}
\newcommand{\cc}{\mathrm{c}}
\newcommand{\bb}{\mathrm{b}}
\newcommand{\hh}{\mathrm{h}}

\newcommand{\Aut}{\operatorname{Aut}}
\newcommand{\GL}{\operatorname{GL}}

\newcommand{\Bir}{\operatorname{Bir}}
\newcommand{\PGL}{\operatorname{PGL}}

\newcommand{\Fix}{\operatorname{Fix}}

\newcommand{\rk}{\operatorname{rk}}

\newcommand{\red}{\operatorname{red}}

\def \ge {\geqslant}
\def \le {\leqslant}

\date{}

\title{Automorphism groups of compact complex surfaces}

\author{Yuri Prokhorov}
\author{Constantin Shramov}

\address{
Steklov Mathematical Institute of Russian Academy of Sciences, 8 Gubkina st.,
Moscow, 119991, Russia
\newline
National Research University Higher School of Economics, Laboratory of Algebraic Geometry, 6 Usacheva str., Moscow, 119048, Russia
}

\email{prokhoro@mi.ras.ru}
\email{costya.shramov@gmail.com}

\thanks{This work is supported by the Russian Science Foundation under grant \textnumero 18-11-00121.}

\begin{document}

\begin{abstract}
We study automorphism groups and birational automorphism groups of compact complex surfaces.
We show that the automorphism group of such a surface $X$ is always Jordan, and
the birational automorphism group is Jordan unless $X$ is birational to a product of an elliptic and a rational curve.
\end{abstract}

\maketitle
\tableofcontents

\section{Introduction}

It often happens that some infinite subgroups exhibit a nice and simple behavior on the level
of their finite subgroups. An amazing example of such a situation is given by
the following result due to
C.\,Jordan (see~\cite[Theorem~36.13]{Curtis-Reiner-1962}).

\begin{theorem}
\label{theorem:Jordan}
There is a constant~\mbox{$J=J(n)$} such that for every
finite subgroup~\mbox{$G\subset\GL_n(\CC)$}
there exists
a normal abelian subgroup~\mbox{$A\subset G$} of index at most $J$.
\end{theorem}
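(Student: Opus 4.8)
The plan is to reduce to the unitary group and then combine a local ``almost‑commuting forces commuting'' principle with the compactness of $U(n)$. Given a finite $G\subset\GL_n(\CC)$, the Hermitian form $h(x,y)=\sum_{g\in G}\langle gx,gy\rangle$ (with $\langle\cdot,\cdot\rangle$ the standard Hermitian product) is $G$‑invariant and positive definite, so in an $h$‑orthonormal basis $G$ becomes a subgroup of $U(n)$; from now on $G\subset U(n)$. Equip matrices with the operator norm $\|\cdot\|$, which is invariant under multiplication by unitaries on either side. Everything rests on the elementary estimate
\[
\|ghg^{-1}h^{-1}-1\|\;\le\;2\,\|g-1\|\,\|h-1\|\qquad(g,h\in U(n)),
\]
immediate from $gh-hg=(g-1)(h-1)-(h-1)(g-1)$.

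The core statement — and the step I expect to be the real obstacle — is a \emph{Jordan lemma}: for a suitable $\varepsilon=\varepsilon(n)>0$, comparable to $\sin(\pi/n)$, every \emph{finite} subgroup $H\subset U(n)$ generated by $\{h\in H:\|h-1\|<\varepsilon\}$ is abelian. I would prove this by induction on $n$, the case $n=1$ being trivial. If $H$ consists of scalars it is abelian; otherwise pick a non‑scalar $g_0\in H$ with $\|g_0-1\|$ minimal. For any generator $h$ one has $\|[g_0,h]-1\|\le 2\varepsilon\|g_0-1\|<\|g_0-1\|$ (using $\varepsilon<\tfrac12$), so if $[g_0,h]$ were non‑scalar it would be closer to $1$ than $g_0$, contradicting minimality; hence $[g_0,h]=\mu\cdot 1$ is scalar. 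Then $g_0hg_0^{-1}=\mu h$, so multiplication by $\mu$ acts freely on the spectrum of $h$ and $\mathrm{ord}(\mu)$ divides $n$; a non‑trivial such $\mu$ satisfies $|\mu-1|\ge 2\sin(\pi/n)$, whereas $|\mu-1|=\|[g_0,h]-1\|<4\varepsilon$. Choosing $\varepsilon$ small enough that $4\varepsilon<2\sin(\pi/n)$ forces $\mu=1$, so $g_0$ commutes with every generator and is central in $H$. Being non‑scalar and central, $g_0$ splits $\CC^n$ into at least two $H$‑invariant eigenspaces $V_1,\dots,V_m$, each of dimension $<n$; restriction embeds $H$ into $\prod_j U(V_j)$, the restricted generators still satisfy $\|h|_{V_j}-1\|<\varepsilon\le\varepsilon(\dim V_j)$ (here one uses that $\varepsilon(n)$ decreases in $n$), so each factor is abelian by induction and hence so is $H$.

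To finish I would invoke compactness of $U(n)$: cover it by finitely many, say $N=N(n)$, open balls of radius $\varepsilon/2$. For a finite $G\subset U(n)$, set $H=\langle\,g\in G:\|g-1\|<\varepsilon\,\rangle$, abelian by the Jordan lemma. If $g_1,g_2\in G$ lie in a common ball then $\|g_1^{-1}g_2-1\|=\|g_2-g_1\|<\varepsilon$, so $g_1^{-1}g_2\in H$ and $g_1H=g_2H$; hence distinct cosets of $H$ have representatives in distinct balls and $[G:H]\le N$. Finally $H$ need not be normal, but its normal core $A=\bigcap_{g\in G}gHg^{-1}$ is a normal abelian subgroup with $[G:A]$ dividing $[G:H]!$ (via the action of $G$ on $G/H$), so $J(n):=N(n)!$ works. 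The genuinely hard ingredient is the Jordan lemma, and the delicate point inside it is precisely the case where $[g_0,h]$ is a non‑trivial scalar — which is exactly why $\varepsilon$ must be allowed to shrink as $n$ grows, rather than being an absolute constant.
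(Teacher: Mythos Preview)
Your argument is correct and is, in fact, the classical Frobenius--Bieberbach proof of Jordan's theorem. The paper does not give its own proof of this statement at all: it simply records it as the result of Jordan and cites \cite[Theorem~36.13]{Curtis-Reiner-1962}. What you have written is essentially the argument that sits behind that citation, so there is nothing to compare on the level of strategy.

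A couple of minor remarks on the execution. First, the phrase ``multiplication by $\mu$ acts freely on the spectrum of $h$'' is a little informal when eigenvalues may repeat; the cleanest way to extract $\mathrm{ord}(\mu)\mid n$ is just to take determinants in $g_0hg_0^{-1}=\mu h$. Second, the passage to the normal core is unnecessary: since conjugation by a unitary is an isometry, the generating set $\{g\in G:\|g-1\|<\varepsilon\}$ is already conjugation-invariant, so the subgroup $H$ it generates is normal in $G$ from the start. This lets you take $J(n)=N(n)$ rather than $N(n)!$, though of course either bound suffices for the qualitative statement.
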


This motivates the following definition.

\begin{definition}[{see \cite[Definition~2.1]{Popov2011}}]
\label{definition:Jordan}
A group~$\Gamma$ is called \emph{Jordan}
(alternatively, we say
that~$\Gamma$ \emph{has Jordan property})
if there is a constant~$J$ such that
for every finite subgroup~\mbox{$G\subset\Gamma$} there exists
a normal abelian subgroup $A\subset G$ of index at most~$J$.
\end{definition}

In other words, Theorem~\ref{theorem:Jordan} says that the group $\GL_n(\CC)$ is
Jordan. The same applies to any linear algebraic group, since it can be realized
as a subgroup
of a general linear group.

It was noticed by J.-P.\,Serre that Jordan property sometimes holds
for groups of birational automorphisms.

\begin{theorem}[{\cite[Theorem~5.3]{Serre2009},
\cite[Th\'eor\`eme~3.1]{Serre-2008-2009}}]
\label{theorem:Serre}
The group of birational automorphisms of $\PP^2$
over the field $\CC$ (or any other field of characteristic $0$) is Jordan.
\end{theorem}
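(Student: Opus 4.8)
The plan is to reduce, by a $G$-equivariant minimal model argument, to a biregular action on a surface whose automorphism group is under control, and then to apply Theorem~\ref{theorem:Jordan}. So, starting from a finite subgroup $G\subset\Bir(\P^2)$ (the group being taken over $\CC$, or over any fixed field of characteristic $0$), I would first \emph{regularize}: there is a smooth projective rational surface $X$ carrying a faithful biregular action of $G$, i.e.\ an embedding $G\hookrightarrow\Aut(X)$, with $X$ being $G$-equivariantly birational to $\P^2$. One produces $X$ by taking the closure of the graph of the $G$-action in a product of copies of $\P^2$ and then resolving its singularities $G$-equivariantly. Next I would run the $G$-equivariant minimal model program on $X$: since $X$ is rational, $K_X$ is not nef, so one may assume $X$ is a $G$-Mori fibre space, and by the classification of $G$-minimal rational surfaces (Manin, Iskovskikh) either \textbf{(a)} $X$ is a del Pezzo surface with $\Pic(X)^G\cong\ZZ$, or \textbf{(b)} $X$ carries a $G$-equivariant conic bundle $\pi\colon X\to\P^1$ with $\Pic(X)^G\cong\ZZ^2$.

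In case~(a), since $-3K_X$ is very ample for every del Pezzo surface, it embeds $X$ into some $\P^N$ with $N$ bounded independently of $X$ (recall $K_X^2\le 9$), and $\Aut(X)$ is then the stabiliser of $X$ in $\PGL_{N+1}(\CC)$, hence a linear algebraic group; by Theorem~\ref{theorem:Jordan} it is Jordan with a constant depending only on $N$, so a single constant works for all del Pezzo surfaces. In case~(b), the $G$-action on the base gives $1\to G_0\to G\to G_1\to 1$ with $G_1\subset\Aut(\P^1)=\PGL_2(\CC)$, and $G_0$ (the elements preserving every fibre) acts faithfully on a general fibre $F\cong\P^1$, hence embeds into $\PGL_2(\CC)$ as well. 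Since every finite subgroup of $\PGL_2(\CC)$ is cyclic, dihedral, or one of $A_4,S_4,A_5$, each of $G_0$ and $G_1$ has an abelian subgroup of index $\le 60$, so $G$ has an abelian subgroup $A$ of index $\le 60^2$; replacing $A$ by the kernel of the $G$-action on the coset set $G/A$ makes it a \emph{normal} abelian subgroup of index $\le (60^2)!$. Taking the larger of the two constants from (a) and (b) shows $\Bir(\P^2)$ is Jordan; over a general field of characteristic $0$ one runs the same argument, passing to the algebraic closure wherever algebraic closedness is used (the general conic-bundle fibre and the boundedness of finite subgroups of $\PGL_2$).

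I do not expect the final case analysis to be the difficulty --- once the two classical inputs are granted it is short. The real content lies in those inputs: the $G$-equivariant regularization and resolution of the Cremona action, and the Manin--Iskovskikh classification of $G$-minimal rational surfaces into del Pezzo surfaces and conic bundles; I would take both as known. The remaining care points are minor: checking that $G_0$ acts faithfully on a general fibre (an element acting trivially on infinitely many fibres acts trivially on $X$, its fixed locus being closed and a dense union of fibres being Zariski dense), and noting that the del Pezzo degree --- and hence the dimension $N$ --- is uniformly bounded, so that Theorem~\ref{theorem:Jordan} yields a constant independent of $X$.
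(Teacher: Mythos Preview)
The paper does not prove Theorem~\ref{theorem:Serre}; it is quoted from \cite{Serre2009} and \cite{Serre-2008-2009} as a known input and invoked later in the proof of Theorem~\ref{theorem:main}. Your overall strategy --- regularize, run the $G$-equivariant MMP, and land on a del Pezzo surface or a $G$-conic bundle --- is indeed the standard route, and your treatment of case~(a) is correct.

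Case~(b), however, has a genuine gap. The inference ``each of $G_0$ and $G_1$ has an abelian subgroup of index $\le 60$, so $G$ has an abelian subgroup $A$ of index $\le 60^2$'' is false as a purely group-theoretic statement: an extension of two groups, each possessing a bounded-index abelian subgroup, need not itself have one. Concretely, take $G=\ZZ/p\rtimes\ZZ/q$ with $q\mid p-1$ and faithful action; then $G_0=\ZZ/p$ and $G_1=\ZZ/q$ are both cyclic, yet every abelian subgroup of $G$ has index at least $q$, which is unbounded as $p,q$ grow. (This failure of the Jordan property under extensions is exactly what underlies the examples in~\cite{Zarhin10}.) The conclusion you want \emph{is} true in the conic-bundle situation, but one must use the geometry again, not just the exact sequence. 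The clean fix is a fixed-point argument: the image of $G$ in $\Aut(\P^1)\cong\PGL_2(\CC)$ has an orbit of bounded size on the base, so a bounded-index subgroup $G'\subset G$ preserves a single fibre $F_b$; acting on $F_b$ (a smooth conic or a pair of lines), $G'$ again has an orbit of bounded size, so a further bounded-index subgroup fixes a point $x\in X$ and hence embeds into $\GL\big(T_{x,X}\big)\cong\GL_2(\CC)$ by Proposition~\ref{proposition:fixed-point}. Now Theorem~\ref{theorem:Jordan} applies. This is essentially the mechanism the paper appeals to (via \cite[Theorem~4.2]{ProkhorovShramov-RC}) when it handles the rational case inside the proof of Theorem~\ref{theorem:main}.
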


Yu.\,Zarhin pointed out in~\cite{Zarhin10} that there are projective complex
surfaces
whose birational automorphism groups are not Jordan; they are birational to
products $E\times\PP^1$, where $E$ is an elliptic curve.
The following result of V.\,Popov classifies projective
surfaces with non-Jordan birational automorphism
groups.

\begin{theorem}[{\cite[Theorem~2.32]{Popov2011}}]
\label{theorem:Popov}
Let $X$ be a projective surface over $\CC$.
Then the group of birational automorphisms of $X$ is not Jordan if and only if~$X$
is birational to~\mbox{$E\times\PP^1$}, where $E$ is an
elliptic curve.
\end{theorem}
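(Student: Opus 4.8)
The plan is to run through the Enriques--Kodaira classification of projective surfaces up to birational equivalence, using that $\Bir(X)$ depends only on the birational class of $X$. Every projective surface over $\CC$ is birational to exactly one of: (a) a rational surface; (b) a product $C\times\P^1$ with $C$ a smooth curve of genus $g(C)\ge 1$; or (c) a minimal surface $Y$ with $\kappa(Y)\ge 0$, unique in its birational class. For (a) we have $\Bir(X)\cong\Bir(\P^2)$, which is Jordan by Theorem~\ref{theorem:Serre}. Class (b) with $g(C)=1$ is precisely $E\times\P^1$ with $E$ elliptic, and there $\Bir(X)$ is \emph{not} Jordan by Zarhin's examples (finite subgroups of $\Bir(E\times\P^1)$ built from theta groups of line bundles on $E$, of unbounded ``complexity''). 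So it suffices to prove that $\Bir(X)$ is Jordan in case (b) with $g(C)\ge 2$ and in case (c); together with the above this gives the stated equivalence.

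All the bookkeeping will rest on one elementary lemma: \emph{if $1\to N\to G\to Q\to 1$ is exact, $N$ is Jordan, and $Q$ has bounded finite subgroups (i.e.\ $|F|\le B$ for some $B$ and all finite $F\subset Q$), then $G$ is Jordan.} Indeed, for finite $F\subset G$ the group $F\cap N$ is normal in $F$ of index $\le B$ and contains a normal abelian subgroup $A$ of index $\le J(N)$; replacing $A$ by its normal core in $F$ makes $A$ normal in $F$ and keeps $[F:A]$ bounded in terms of $B$ and $J(N)$ alone. Two further inputs: finite subgroups of $\GL_n(\ZZ)$ have bounded order (Minkowski), and $\GL_n(\CC)$ is Jordan (Theorem~\ref{theorem:Jordan}); and $\PGL_2(K)$ is Jordan for every field $K$ of characteristic $0$ — a finite subgroup has its finitely many entries in a finitely generated, hence $\CC$-embeddable, subfield, so it is abstractly a cyclic, dihedral, $A_4$, $S_4$ or $A_5$ group, each of which has a normal abelian subgroup of index $\le 60$.

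\emph{Case (b), $g(C)\ge 2$.} Since $\mathrm{Alb}(C\times\P^1)=\mathrm{Jac}(C)$ and, for $g(C)\ge 1$, the nonempty fibres of the Albanese map are the fibres of the projection to $C$, every birational self-map of $C\times\P^1$ preserves this $\P^1$-fibration; hence there is an exact sequence $1\to\PGL_2(\CC(C))\to\Bir(C\times\P^1)\to\Aut(C)$. For $g(C)\ge 2$ the group $\Aut(C)$ is finite, so the Jordan group $\PGL_2(\CC(C))$ sits inside $\Bir(C\times\P^1)$ as a normal subgroup of finite index, and the lemma (with $Q$ finite) applies. \emph{Case (c), $\kappa(Y)\ge 0$.} A minimal surface of non-negative Kodaira dimension is its own unique minimal model, and birational maps between minimal surfaces of non-negative Kodaira dimension are isomorphisms, so $\Bir(X)\cong\Aut(Y)$; if moreover $\kappa(Y)=2$ then $\Aut(Y)$ is finite and there is nothing to prove. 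In general $\Aut^0(Y)$ is an abelian variety — a minimal surface admitting a $\mathbb{G}_a$- or $\mathbb{G}_m$-action would be ruled or rational — so $\Aut^0(Y)$ is Jordan (its finite subgroups are abelian), while the quotient $\Gamma=\Aut(Y)/\Aut^0(Y)$ maps to $\GL\bigl(H^2(Y,\ZZ)\bigr)$ with finite kernel, because $\Aut^0(Y)$ has finite index in the kernel of the action of $\Aut(Y)$ on $H^2(Y,\ZZ)$ (Fujiki, Lieberman); by Minkowski's theorem $\Gamma$ then has bounded finite subgroups, and the lemma applied to $1\to\Aut^0(Y)\to\Aut(Y)\to\Gamma\to 1$ finishes case (c).

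The classification input and cases (a), (b) are essentially formal; the substance, and the main obstacle, is case (c), specifically its two structural ingredients — that $\Aut^0(Y)$ is an abelian variety for minimal $Y$ with $\kappa\ge 0$ (a consequence of the classification) and that the kernel of the $\Aut(Y)$-action on $H^2(Y,\ZZ)$ is only finitely larger than $\Aut^0(Y)$ (the Fujiki--Lieberman rigidity theorem). One must also take care that the boundedness hypothesis on $Q$ in the lemma cannot be dropped: ``extension of Jordan by Jordan'' is in general false, the semidirect product $\Bir(E\times\P^1)=\PGL_2(\CC(E))\rtimes\Aut(E)$ being precisely such a non-Jordan extension — the obstruction being that $\Aut(E)$ contains $(\ZZ/n\ZZ)^2$ for every $n$.
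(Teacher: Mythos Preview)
The paper does not prove this theorem: it is quoted from \cite{Popov2011} and used as a black box (see Lemma~\ref{lemma:bielliptic}, Proposition~\ref{proposition:Aut-minimal}, and the proof of Theorem~\ref{theorem:main}). So there is no proof in the paper to compare against directly.

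Your argument is essentially correct and is, in spirit, the Meng--Zhang approach \cite{MengZhang} specialised to surfaces. Cases~(a) and~(b) are fine. In case~(c) the two structural ingredients you invoke are standard: (1) for a minimal projective surface $Y$ with $\varkappa(Y)\ge 0$ the connected component $\Aut^0(Y)$ has no linear part (else $Y$ would be uniruled), hence is an abelian variety by Chevalley's structure theorem; (2) Fujiki--Lieberman gives that $\Aut^0(Y)$ has finite index in the kernel of $\Aut(Y)\to\GL\big(H^2(Y,\ZZ)\big)$. Together with Minkowski and your extension lemma this does the job. One small gap worth tightening: in your extension lemma you pass from ``$A$ normal in $F\cap N$'' to ``replace $A$ by its normal core in $F$'' without checking the index bound---the core of a subgroup of index $m$ has index at most $m!$, so this is harmless, but say it.

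By contrast, when the present paper treats the analogous non-projective cases it does \emph{not} use a Fujiki--Lieberman-type statement, because no such uniform structure theorem is available for $\Aut^0$ of general compact complex surfaces. Instead it goes case by case through the Enriques--Kodaira list (tori, K3, Enriques, bielliptic, Kodaira, $\varkappa=1$, general type; see \S\ref{section:non-negative}), using ad hoc tools: explicit description for tori, cohomological bounds for K3, the pluricanonical fibration for $\varkappa=1$, and solvmanifold/fundamental-group arguments for Kodaira and Inoue surfaces. Your route is shorter for projective $X$ precisely because the projective/K\"ahler hypothesis unlocks the Fujiki--Lieberman machinery; the paper's longer route is the price of working beyond the K\"ahler world.
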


Automorphism groups having Jordan property were studied recently in many
different contexts.
Yu.\,Prokhorov and C.\,Shramov in \cite[Theorem~1.8]{ProkhorovShramov-RC} and
\cite[Theorem~1.8]{Prokhorov-Shramov-2013} proved that this property holds for
groups of birational selfmaps of rationally connected algebraic varieties,
and some other algebraic varieties of arbitrary dimension. Actually, their
results
were initially obtained modulo a conjectural boundedness of terminal Fano
varieties
(see e.\,g.~\cite[Conjecture~1.7]{ProkhorovShramov-RC}), which
was recently proved by C.\,Birkar in~\cite[Theorem~1.1]{Birkar}.
Also Yu.\,Prokhorov and C.\,Shramov classified Jordan birational automorphism
groups
of algebraic threefolds in~\cite{ProkhorovShramov-dim3}. Some results about
birational
automorphisms of conic bundles were obtained by T.\,Bandman and Yu.\,Zarhin
in~\cite{BandmanZarhin2015a}. For other results on
Jordan birational automorphism groups see \cite{Prokhorov-Shramov-JCr3},
\cite{Prokhorov-Shramov-p-groups}, and \cite{Yasinsky2016a}.

S.\,Meng and D.-Q.\,Zhang proved in \cite{MengZhang} that the automorphism group
of any projective variety is Jordan, and J.\,H.\,Kim generalized this
to automorphism groups of compact K\"ahler manifolds
in~\cite{Kim}.
T.\,Bandman and Yu.\,Zarhin proved
a similar result for automorphism groups of quasi-projective surfaces
in~\cite{BandmanZarhin2015},
and also in some particular cases in arbitrary dimension in~\cite{BandmanZarhin2017}.
For a survey of some other relevant results see~\cite{Popov-Jordan}.

\'E.\,Ghys asked (following a more particular question posed earlier by W.\,Feit)
whether the diffeomorphism group
of a smooth compact manifold is always Jordan. Recently
B.\,Csik\'os, L.\,Pyber, and E.\,Szab\'o in
\cite{CsikosPyberSzabo} provided a counterexample
following the method of~\cite{Zarhin10};
see also \cite{Riera-LieGroups} for a further development of this
method, and \cite[Corollary~2]{Popov-Diff}
for a non-compact counterexample.
However, Jordan property holds for diffeomorphism groups in many cases;
see \cite{Riera2016}, \cite{Riera-Spheres}, \cite{TurullRiera2015},
\cite{Riera-OddCohomology},
\cite{GuazziZimmermann},
\cite{Zimmermann-Survey},
\cite{Zimmermann-ConnectedSum},
\cite{Zimmermann2014},
\cite{Zimmermann-Isometries},
and references therein.
Also there are results for groups of symplectomorphisms, see
\cite{Riera-Symp} and \cite{Riera-HamSymp}.

The goal of this paper is to
generalize Theorem~\ref{theorem:Popov},
and to some extent the results of~\cite{MengZhang} and~\cite{Kim},
to a different setting, namely, to the
case of
compact complex surfaces (see~\S\ref{section:minimal-surfaces} below for basic
definitions and background).
There are some particular cases that are already known.
For instance, automorphism groups of Inoue surfaces (see \cite{Inoue1974})
and primary Kodaira surfaces (see \cite[\S6]{Kodaira-structure-1},
\cite[\S\,V.5]{BHPV-2004}) were studied in~\cite{ProkhorovShramov-IK}.

\begin{theorem}[{\cite[Theorem~1.2]{ProkhorovShramov-IK}}]
\label{theorem:IK}
Let $X$ be either an Inoue surface or a primary Kodaira surface.
Then the automorphism group of $X$ is Jordan.
\end{theorem}

We prove the following.

\begin{theorem}\label{theorem:Aut}
Let $X$ be a connected compact complex surface.
Then the automorphism group of $X$ is Jordan.
\end{theorem}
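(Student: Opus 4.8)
The plan is to reduce to the Enriques--Kodaira classification of compact complex surfaces and handle the types case by case, always exploiting the fact that the group of connected components $\Aut(X)/\Aut^0(X)$ is the object that needs a uniform Jordan bound, since $\Aut^0(X)$ is a complex Lie group and its group of components times a linear part is already Jordan by Theorem~\ref{theorem:Jordan} (together with the structure theory of complex Lie groups: a connected complex Lie group is an extension of an abelian variety type quotient by a linear group, and its maximal compact subgroup controls the finite subgroups). So the first step is a general lemma: $\Aut(X)$ is Jordan provided $\Aut^0(X)$ is Jordan and the action of $\Aut(X)$ on $H^*(X,\ZZ)$, or rather on $H^2(X,\ZZ)$ together with finitely many additional discrete invariants, has image with bounded finite subgroups. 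For surfaces one can often invoke that $\Aut(X)$ acts on $H^2(X,\ZZ)$ preserving the intersection form, and a finite group acting faithfully on a lattice of rank $b_2$ has order bounded in terms of $b_2$ (Minkowski), so the real content is to control the kernel of $\Aut(X)\to \GL(H^2(X,\ZZ))$.

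Next I would run through the classification. For surfaces of Kodaira dimension $2$ (general type) and $1$ (properly elliptic), and for surfaces with $\kappa=0$ that are not tori, the automorphism group is known to be finite or, in the elliptic case, to have $\Aut^0(X)$ a torus with $\Aut(X)/\Aut^0(X)$ finite modulo the monodromy action — in all these cases $\Aut(X)$ is virtually abelian or has a bounded-index linear-algebraic identity component, hence Jordan; the kernel of the action on cohomology is known to be finite (for general type) or can be analyzed via the elliptic fibration (the automorphisms preserve the fibration, act on the base $\PP^1$ or an elliptic/higher-genus base, and the relevant groups are either finite or extensions of tori by finite groups). For tori and hyperelliptic (bielliptic) surfaces, $\Aut(X)$ is an extension of a finite group by the translation group, which is a complex torus, hence an abelian Lie group, so Jordan with the same constant as the component group. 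The delicate part is Kodaira dimension $-\infty$: rational surfaces are handled by Theorem~\ref{theorem:Serre} (a rational surface's automorphism group embeds in $\Bir(\PP^2)$), ruled surfaces over a curve $C$ of genus $\geq 1$ have $\Aut(X)$ mapping to $\Aut(C)$ with kernel inside $\Bir$ of a $\PP^1$-bundle, which is manageable; but in the non-algebraic part of $\kappa=-\infty$ one meets surfaces of class VII, including Hopf surfaces, Inoue surfaces, and Kato surfaces.

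The main obstacle I expect is exactly the class VII surfaces, in particular the general (non-elliptic, non-Hopf) ones and minimal Kato surfaces, where there is no fibration over a nice base to exploit and $b_1=1$, $b_2$ arbitrary, and the structure of $\Aut(X)$ is less classical. For Hopf surfaces $\Aut^0(X)$ can be positive-dimensional (e.g.\ for the standard Hopf surface it contains $\GL_2(\CC)$-like groups), so one must check that this identity component, a linear complex Lie group, is Jordan — which it is — and that the component group is finite; for Inoue surfaces $\Aut(X)$ is essentially finite-by-(the solvable group acting), small; for minimal Kato surfaces one uses that they contain a global spherical shell and $\Aut(X)$ is known to be finite or to act with finite image on the configuration of rational curves, forcing $\Aut(X)$ to be finite-by-abelian. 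So the strategy is: establish the Lie-theoretic reduction lemma cleanly, then invoke (and cite) the detailed description of $\Aut^0(X)$ and of the component group for each entry of the Enriques--Kodaira table, the hardest being the careful treatment of class VII$_0$ surfaces with $b_2>0$. For the blow-up step one also needs that Jordan-ness is a birational-type invariant only weakly — it is \emph{not} preserved under blow-ups for $\Bir$, but for $\Aut$ one reduces to a \emph{minimal} model by noting that $\Aut(X)$ of a non-minimal surface preserves the finite set of $(-1)$-curves that can be contracted, up to a bounded-index subgroup which descends to the minimal model, so Jordan-ness of $\Aut$ of the minimal surface suffices.
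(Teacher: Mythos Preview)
Your overall strategy---reduce via Enriques--Kodaira and control $\Aut(X)$ through a Jordan identity component plus a cohomology action with Minkowski-bounded image---is close in spirit to the paper's, but there is a genuine gap in your treatment of minimal class~VII surfaces with $b_2>0$. You write that ``for minimal Kato surfaces one uses that they contain a global spherical shell,'' implicitly assuming that every minimal class~VII surface with $b_2>0$ \emph{is} a Kato surface. That is the GSS conjecture, which is open (known only for small $b_2$). The paper deliberately avoids any classification here: its \S\ref{section:class-VII} proves directly that for any non-projective compact complex surface with $\chit(X)\neq 0$ the group $\Aut(X)$ is Jordan, using only the topological and holomorphic Lefschetz fixed-point formulae. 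The key steps are that every non-trivial element of $\overline{\Aut}(X)$ (the kernel of the action on $H^*(X,\ZZ)$) has exactly $\chit(X)$ isolated fixed points, that the stabilizer of any such point is cyclic (by Proposition~\ref{proposition:fixed-point} and a parity argument ruling out elements of even order), and that a counting argument then forces every finite subgroup of $\overline{\Aut}(X)$ to contain a cyclic subgroup of index at most $\bb_2(X)$. No knowledge of curves, shells, or configurations on $X$ is needed.

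Two secondary points. First, your reduction from non-minimal to minimal surfaces is sketchy in the rational and ruled cases, where the set of $(-1)$-curves need not be finite and there is no unique minimal model; the paper sidesteps this by citing the projective result directly for those types, and for every other type uses that $\Aut(X)\subset\Bir(X)=\Aut(X_{\min})$ via Proposition~\ref{proposition:Bir-vs-Aut}. Second, your $\Aut^0(X)$ framework is a reasonable alternative organizing device (connected complex Lie groups are indeed Jordan), but by itself it does not resolve the hard cases: for Hopf surfaces and for class~VII surfaces with $b_2>0$ one still has to control the potentially infinite discrete quotient $\overline{\Aut}(X)/\Aut^0(X)$, and that is precisely where the paper's explicit arguments (centralizers in $\GL_2(\CC)$ for Hopf, fixed-point counting for $\chit\neq 0$) do the real work.
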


One can also show (see~\cite[Theorem~1.3]{Riera-OddCohomology} or Theorem~\ref{theorem:Riera-generators} below)
that the number of generators of any finite subgroup of the automorphism group of a compact complex surface~$X$,
and actually of any finite subgroup of the diffeomorphism group of an arbitrary compact manifold, is bounded by a constant
that depends only on~$X$.

The main result of this paper is as follows.

\begin{theorem}
\label{theorem:main}
Let $X$ be a connected compact complex surface.
Then the group of birational automorphisms of $X$ is not Jordan if and only if~$X$
is birational to~\mbox{$E\times\PP^1$}, where~$E$ is an
elliptic curve. Moreover, there always exists
a constant~\mbox{$R=R(X)$} such that every finite subgroup of the birational automorphism group of~$X$
is generated by at most~$R$ elements.
\end{theorem}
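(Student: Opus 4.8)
The plan is to run through the Enriques--Kodaira classification, using that $\Bir(X)$ depends only on the birational class of $X$. Call $X$ \emph{special} if it is birational to $E\times\P^1$ with $E$ an elliptic curve. For special $X$ the group $\Bir(X)$ is not Jordan by Zarhin's construction (the ``only if'' direction of Theorem~\ref{theorem:Popov}, or see~\cite{Zarhin10}), which settles that direction of the equivalence. For the generator bound here, the Albanese map realizes $E$ as a birational invariant of $X$, so every element of $\Bir(X)=\Bir(E\times\P^1)$ respects the first projection up to an automorphism of $E$, giving an exact sequence $1\to\PGL_2(\CC(E))\to\Bir(X)\to\Aut(E)$. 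A finite subgroup of $\PGL_2(\CC(E))$ becomes, over an algebraic closure, a finite subgroup of $\PGL_2$, hence is cyclic, dihedral, $A_4$, $S_4$ or $A_5$, so it is generated by at most two elements; finite subgroups of $\Aut(E)\cong E\rtimes\mumu$ are generated by at most three elements. Hence every finite subgroup of $\Bir(X)$ is generated by at most five elements.

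Now suppose $X$ is not special. If $X$ is rational, then $\Bir(X)=\Cr_2(\CC)$ is Jordan by Theorem~\ref{theorem:Serre}. For the generator bound, run the $G$-equivariant minimal model program for a finite subgroup $G\subset\Cr_2(\CC)$: it is conjugate into $\Aut(S)$ for a $G$-minimal rational surface $S$, which is either a del Pezzo surface, so that $\Aut(S)$ is a linear algebraic group of bounded embedding dimension and $G$ is generated by a bounded number of elements by Theorem~\ref{theorem:Jordan} together with diagonalisability of finite abelian linear groups, or carries a $G$-equivariant conic bundle structure over $\P^1$, in which case $G$ is an extension of a finite subgroup of $\PGL_2(\CC)$ by a finite subgroup of $\PGL_2(\CC(t))$ and hence is generated by at most four elements. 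So the bound holds with an absolute constant. If instead $X$ is ruled over a smooth curve $C$ of genus $g\ge1$ but not rational, then $C$, being the image of the Albanese map, is a birational invariant, and non-speciality forces $g\ge2$, so $\Aut(C)$ is finite of order at most $84(g-1)$. As before there is an exact sequence $1\to\PGL_2(\CC(C))\to\Bir(X)\to\Gamma\to1$ with $\Gamma\subseteq\Aut(C)$. Given a finite $H\subset\Bir(X)$, put $H_0=H\cap\PGL_2(\CC(C))$, so $[H:H_0]\le|\Gamma|$; the group $H_0$ is cyclic, dihedral, or of order at most $60$. In the first two cases $H_0$ has a characteristic abelian subgroup of index at most $2$, which is then normal in $H$ of index at most $2|\Gamma|$; in the last case $|H|\le 60|\Gamma|$. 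Hence $\Bir(X)$ is Jordan with constant at most $60\cdot84(g-1)$, and $H$ is generated by at most $2+\log_2\bigl(84(g-1)\bigr)$ elements.

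Finally, if $X$ is neither rational nor ruled --- this in particular covers all non-algebraic surfaces, for instance those of class~VII --- then $X$ has a unique minimal model $Y$, and a birational self-map of a minimal surface that is not rational or ruled is biregular, so $\Bir(X)=\Bir(Y)=\Aut(Y)$. Then Theorem~\ref{theorem:Aut} shows $\Bir(X)$ is Jordan, and Theorem~\ref{theorem:Riera-generators} applied to the fixed surface $Y$ gives the generator bound. Together with the previous two paragraphs this proves both the equivalence and the ``moreover'' clause.

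The main obstacle is not any single estimate --- each step is either a citation (Theorems~\ref{theorem:Jordan}, \ref{theorem:Serre}, \ref{theorem:Aut}, \ref{theorem:Riera-generators}) or elementary group theory about finite subgroups of $\PGL_2$ over various fields and of linear algebraic groups --- but the correct organisation of the classification for \emph{all} compact complex surfaces, including the non-K\"ahler ones. One must be certain that uniqueness of the minimal model, and hence the collapse $\Bir=\Aut$, really does hold outside the rational and ruled classes, and that in the ruled case the base curve $C$ is genuinely a birational invariant on which $\Bir(X)$ acts with kernel exactly $\PGL_2(\CC(C))$; once these structural facts are in place, the argument proceeds as above.
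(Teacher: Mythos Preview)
Your argument is correct and follows the same overall architecture as the paper: reduce to a minimal model, split into rational, ruled, and everything else, and in the last case invoke Proposition~\ref{proposition:Bir-vs-Aut} to identify $\Bir(X)$ with $\Aut$ of the minimal model, then apply Theorem~\ref{theorem:Aut} and Theorem~\ref{theorem:Riera-generators}. The differences are in execution of the first two cases. For the Jordan property on ruled surfaces the paper simply cites Theorem~\ref{theorem:Popov}, whereas you give a direct argument via the exact sequence with kernel $\PGL_2(\CC(C))$ and the classification of its finite subgroups; your route is more self-contained and yields an explicit constant. For the generator bound in the rational case the paper uses \cite[Theorem~4.2]{ProkhorovShramov-RC} together with Proposition~\ref{proposition:fixed-point} to produce a bounded-index subgroup embedding in $\GL_2(\CC)$, while you run the $G$-equivariant MMP and treat del Pezzo surfaces and conic bundles separately; both are standard and your version gives slightly sharper explicit bounds at the cost of invoking the classification of $G$-minimal rational surfaces. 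One small remark: in your dihedral step the ``characteristic abelian subgroup of index at most $2$'' is the rotation subgroup only for $D_n$ with $n\ge 3$; for $n\le 2$ the group is itself abelian, so the conclusion stands but the phrasing could be tightened.
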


The plan of the paper is as follows.
In \S\ref{section:Jordan} we collect some elementary facts about Jordan
property, and other boundedness
properties for subgroups.
In \S\ref{section:minimal-surfaces} we recall the basic facts from the theory of
compact complex
surfaces, most importantly their Enriques--Kodaira classification.
In \S\ref{section:automorphisms} we recall some important general facts
concerning automorphisms
of complex spaces.
In \S\ref{section:class-VII} we study automorphism groups
of non-projective surfaces with non-zero topological Euler characteristic;
an important subclass of such surfaces is formed by minimal surfaces of class~VII
with non-zero second Betti number (which are still not completely classified).
In~\S\ref{section:Hopf} we study
automorphism groups of
Hopf surfaces. In~\S\ref{section:Kodaira} we study
automorphism groups of (secondary) Kodaira surfaces.
In \S\ref{section:non-negative} we study automorphism groups
of other minimal surfaces of non-negative Kodaira dimension, and prove
Theorems~\ref{theorem:Aut} and~\ref{theorem:main}.

Our general strategy is to consider the compact complex surfaces according to
Enriques--Kodaira classification.
Note that some
of our theorems follow from more general results
of I.\,Mundet i Riera,
cf. Theorems~\ref{theorem:class-VII} and~\ref{theorem:Riera}
(and also the discussion in the end of~\S\ref{section:class-VII}).
Similarly, some other results are implied by~\cite{Kim}.
We also point out that Jordan property always holds for the connected
component of the identity in the automorphism group of an arbitrary connected compact complex manifold
by~\mbox{\cite[Theorem~7]{Popov-LieGroups}}.

We are grateful to M.\,Brion, S.\,Nemirovski, and M.\,Verbitsky for useful discussions.
Special thanks go to the referee for a careful reading of our paper.

\section{Jordan property}
\label{section:Jordan}

In this section we collect some group-theoretic properties related to the Jordan property, and prove
a couple of auxiliary results about them.
We start by recalling a useful result that is very well known (see for instance \cite[\S4.4]{Springer1977}).

\begin{theorem}\label{theorem:PGL2}
Let $G\subset\Aut(\PP^1)\cong\PGL_2(\CC)$ and $\tilde{G}\subset\GL_2(\CC)$ be finite subgroups.
Then~$G$ is either cyclic, or dihedral,
or isomorphic to one of the groups~$\mathfrak{A}_4$, $\mathfrak{S}_4$, or~$\mathfrak{A}_5$.
In particular, the group $G$ has a cyclic subgroup of index at most~$12$,
and the group $\tilde{G}$ has an abelian subgroup of index at most~$12$. Furthermore, if $|G|$ is odd,
then $G$ is cyclic, and if $|\tilde{G}|$ is odd, then~$\tilde{G}$ is abelian.
\end{theorem}

Apart from the Jordan property, one can consider other restrictions formulated in
terms of finite subgroups of a given group.

\begin{definition}
We say that a group $\Gamma$
\emph{has bounded finite subgroups}
if there exists a constant $B=B(\Gamma)$ such that
for any finite subgroup
$G\subset\Gamma$ one has $|G|\leqslant B$.
\end{definition}

The following result is due to
H.\,Minkowski, see for instance~\cite[Theorem~1]{Serre2007}.

\begin{theorem}
\label{theorem:Minkowski}
For every $n$ the group $\GL_n(\QQ)$ has bounded finite subgroups.
\end{theorem}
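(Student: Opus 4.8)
The plan is to reduce the statement to a count over a finite field, via two classical steps: integrality, then reduction modulo an odd prime.

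First I would show that every finite subgroup $G\subset\GL_n(\QQ)$ is conjugate in $\GL_n(\QQ)$ to a subgroup of $\GL_n(\ZZ)$. Starting from the standard lattice $L_0=\ZZ^n\subset\QQ^n$, one forms $L=\sum_{g\in G}g(L_0)$; this is a finitely generated $\ZZ$-submodule of $\QQ^n$ containing a $\QQ$-basis, hence a full lattice, and it is $G$-invariant by construction. Choosing a $\ZZ$-basis of $L$ then conjugates $G$ into $\GL_n(\ZZ)$, so it suffices to bound the orders of finite subgroups of $\GL_n(\ZZ)$.

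Next I would fix $p=3$ and consider the reduction homomorphism $\rho\colon\GL_n(\ZZ)\to\GL_n(\FF_3)$, and argue that $\ker\rho$ is torsion-free, so that $\rho$ is injective on every finite subgroup. The heart of this is the elementary estimate that if $g=I+3^kA$ with $A\in M_n(\ZZ)$, $k\geqslant 1$ and $A\not\equiv 0\pmod 3$, then $g$ has infinite order: for $m$ coprime to $3$ the binomial expansion gives $g^m\equiv I+3^kmA\pmod{3^{k+1}}$, which is $\neq I$, while $g^3=I+3^{k+1}A'$ with $A'\equiv A\pmod 3$ (here one uses that $\binom{3}{2}=3$ is divisible by $3$, i.e. that $p$ is odd, along with $k\geqslant 1$), so by induction $g^{3^s}$ keeps the same shape and is never the identity. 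Consequently no nontrivial element of $\ker\rho$ has finite order. Combining the two steps, every finite $G\subset\GL_n(\QQ)$ embeds into $\GL_n(\FF_3)$, so
\[
|G|\;\leqslant\;|\GL_n(\FF_3)|\;=\;\prod_{i=0}^{n-1}\bigl(3^n-3^i\bigr)\;<\;3^{n^2},
\]
and we may take $B(n)=3^{n^2}$.

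The only delicate point will be the valuation computation in the second step, and in particular the necessity of choosing an odd prime: reduction modulo $2$ is not injective on torsion, as $-I$ already shows. All the remaining verifications are routine.
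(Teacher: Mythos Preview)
Your argument is correct and is essentially the classical proof of this fact. The lattice-averaging step is standard, and your $3$-adic valuation computation showing that the congruence subgroup $\ker\big(\GL_n(\ZZ)\to\GL_n(\FF_3)\big)$ is torsion-free is carried out cleanly; in particular you correctly isolate why an odd prime is needed.

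As for comparison: the paper does not actually prove this theorem. It is stated as a known result due to Minkowski and supported only by a reference to Serre's survey~\cite{Serre2007}. So there is no ``paper's own proof'' to compare against---you have supplied a complete self-contained argument where the paper gives none. Your bound $|\GL_n(\FF_3)|$ is of course much cruder than Minkowski's sharp bound (which controls each prime separately), but the statement as formulated only asks for \emph{some} bound, and that is exactly what your argument delivers.
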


\begin{definition}\label{definition:strongly-Jordan}
We say that a group~$\Gamma$
is \emph{strongly Jordan}
if it is Jordan, and there exists a constant $R=R(\Gamma)$ such that
every finite subgroup
in $\Gamma$ is generated by at most $R$ elements.
\end{definition}

Note that Definition~\ref{definition:strongly-Jordan} is equivalent to
a similar definition in~\cite{BandmanZarhin2015}.
An example of a strongly Jordan group is given by~$\GL_n(\CC)$.
This follows from
the fact that
every abelian subgroup of $\GL_n(\CC)$ is conjugate to a group that consists
of diagonal matrices.
Note however that even the group
$\CC^*$ contains \emph{infinite} abelian subgroups
of arbitrarily large rank.

The following elementary result will be useful to study Jordan property.

\begin{lemma}
\label{lemma:group-theory}
Let
$$
1\longrightarrow\Gamma'\longrightarrow\Gamma\longrightarrow\Gamma''
$$
be an exact sequence of groups. Then the following assertions
hold.
\begin{itemize}
\item[(i)] If $\Gamma'$ is Jordan (respectively, strongly Jordan) and $\Gamma''$ has bounded finite
subgroups, then~$\Gamma$ is Jordan (respectively, strongly Jordan).

\item[(ii)] If $\Gamma'$ has bounded finite subgroups and $\Gamma''$ is strongly Jordan,
then $\Gamma$ is strongly Jordan.
\end{itemize}
\end{lemma}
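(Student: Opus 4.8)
The plan is to study an arbitrary finite subgroup $G\subset\Gamma$ via the two pieces the exact sequence gives it: its intersection $G'=G\cap\Gamma'$ (normal in $G$, since $\Gamma'$ is the kernel of $\Gamma\to\Gamma''$, hence normal in $\Gamma$) and its image $G''\subset\Gamma''$, which fit into an exact sequence $1\to G'\to G\to G''\to 1$. Throughout I would use two elementary facts. First, a finite group of order at most $B$ is generated by at most $\log_2 B$ elements, and if $1\to N\to E\to Q\to 1$ is exact with $N$ generated by $a$ elements and $Q$ by $b$ elements, then $E$ is generated by at most $a+b$ elements; this handles all the statements about numbers of generators. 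Second, if a subgroup $H$ of a finite group $G$ has index $n$, then its normal core $\bigcap_{g\in G}gHg^{-1}$ is a normal subgroup of $G$ of index at most $n!$; so to prove that $\Gamma$ is Jordan it suffices to produce, for each finite $G\subset\Gamma$, an \emph{abelian} (not necessarily normal) subgroup of $G$ of index bounded in terms of $\Gamma$ alone.

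For assertion (i): apply the Jordan property of $\Gamma'$ to $G'$ to get a normal abelian $A\subset G'$ with $[G':A]\le J(\Gamma')$; since $G''\subset\Gamma''$ is finite, $[G:G']=|G''|\le B(\Gamma'')$, so $[G:A]\le J(\Gamma')\,B(\Gamma'')$, and the core of $A$ in $G$ is the required subgroup. If moreover $\Gamma'$ is strongly Jordan, then $G'$ is generated by at most $R(\Gamma')$ elements and $G''$ by at most $\log_2 B(\Gamma'')$, so $G$ is generated by at most $R(\Gamma')+\log_2 B(\Gamma'')$ elements.

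For assertion (ii): the generator bound is immediate, since $|G'|\le B(\Gamma')$ forces $G'$ to be $\log_2 B(\Gamma')$-generated and $G''\subset\Gamma''$ is $R(\Gamma'')$-generated, so $G$ is generated by at most $R(\Gamma'')+\log_2 B(\Gamma')$ elements. For the Jordan property, using that $\Gamma''$ is strongly Jordan choose a normal abelian $\bar A\subset G''$ with $[G'':\bar A]\le J(\Gamma'')$, noting that $\bar A$, being a finite subgroup of $\Gamma''$, has rank at most $R(\Gamma'')$. Let $A\subset G$ be the preimage of $\bar A$: then $[G:A]\le J(\Gamma'')$, the quotient $A/G'$ is abelian of rank $\le R(\Gamma'')$, and $A$ is generated by at most $R(\Gamma'')+\log_2 B(\Gamma')$ elements (lift generators of $\bar A$ and adjoin generators of $G'$); in particular $[A,A]\subset G'$, so $|[A,A]|\le B(\Gamma')$. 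Now pass to $C=C_A([A,A])$, whose index in $A$ is at most $|\Aut([A,A])|$, hence bounded in terms of $B(\Gamma')$. The group $C$ is nilpotent of class $\le 2$ (since $[C,C]\subset[A,A]$ and $C$ centralizes $[A,A]$), its commutator subgroup has order $\le B(\Gamma')$, and $C/Z(C)$ has rank $\le R(\Gamma'')+\log_2 B(\Gamma')$ because $C\cap[A,A]=Z([A,A])\subset Z(C)$ and $C/Z([A,A])$ embeds into the abelian quotient $A/[A,A]$ of the boundedly-generated group $A$. For a finite class-$\le 2$ group, lifting generators $\bar g_1,\dots,\bar g_d$ of $C/Z(C)$ to $g_i\in C$, the map $x\mapsto([x,g_1],\dots,[x,g_d])\in[C,C]^d$ is a well-defined injective homomorphism on $C/Z(C)$, so $[C:Z(C)]\le|[C,C]|^d$ is bounded in terms of $B(\Gamma')$ and $R(\Gamma'')$. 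Thus $Z(C)$ is an abelian subgroup of $G$ of index bounded in terms of $\Gamma'$ and $\Gamma''$, and its core in $G$ finishes the proof.

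The only genuinely delicate point is this last step of (ii): upgrading ``$G$ is a bounded extension of an abelian group by a bounded group'' to ``$G$ has a normal abelian subgroup of bounded index''. This is false under the weaker hypothesis that $\Gamma''$ is merely Jordan — central extensions of $(\ZZ/2)^{2k}$ by $\ZZ/2$ (extraspecial $2$-groups) have no abelian subgroup of bounded index — and it is precisely to rule this out that one needs $\Gamma''$ to be \emph{strongly} Jordan, which bounds the rank of the abelian sections $\bar A$ and makes the commutator-pairing estimate effective. Everything else is routine bookkeeping with the two elementary facts above.
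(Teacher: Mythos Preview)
Your argument is correct. The paper itself does not give a self-contained proof: it declares assertion~(i) ``obvious'' and for assertion~(ii) simply refers to \cite[Lemma~2.8]{Prokhorov-Shramov-2013} or \cite[Lemma~2.2]{BandmanZarhin2015}. Your write-up is therefore not so much a comparison as a fleshing-out of those references, and the line you take---pull back the abelian subgroup $\bar A$ to $A\subset G$, observe that $[A,A]\subset G'$ has bounded order, pass to a centralizer to obtain a class-$\le 2$ group, and then use the commutator pairing $C/Z(C)\hookrightarrow [C,C]^d$ with $d$ bounded because $\Gamma''$ is \emph{strongly} Jordan---is exactly the standard one. Your closing remark correctly isolates the one genuine idea and gives the right counterexample (extraspecial $2$-groups) showing why mere Jordan-ness of $\Gamma''$ is insufficient.

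One small simplification: instead of centralizing $[A,A]$ you could centralize $G'$ itself; the index $[A:C_A(G')]$ is still bounded by $|\Aut(G')|\le B(\Gamma')!$, and $C_A(G')$ is already nilpotent of class $\le 2$ since its commutator lies in $G'$, which it centralizes. But your version works just as well.
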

\begin{proof}
Assertion~(i) is obvious.
For assertion~(ii) see~\cite[Lemma~2.8]{Prokhorov-Shramov-2013}
or~\mbox{\cite[Lemma~2.2]{BandmanZarhin2015}}.
\end{proof}

It is easy to see that if~$\Gamma_1$ is a subgroup
of finite index in~$\Gamma_2$, then~$\Gamma_2$ is Jordan (respectively, strongly Jordan)
if and only so is~$\Gamma_1$.
At the same time
Jordan property, as well as strong Jordan property,
does not behave well with respect to quotients by infinite groups.
Namely, a quotient of a strongly Jordan group
by its subgroup may fail to be Jordan or to have all of its finite subgroups generated by a bounded number
of elements.
In spite of this we will be able to control the properties
of some quotients by infinite groups that will be important for us.

\begin{lemma}\label{lemma:diagonal-quotient-Z-1}
Let $A$ be an abelian group whose torsion subgroup $A_{\mathrm{t}}$
is isomorphic to~\mbox{$(\QQ/\ZZ)^n$}, and let
$\Lambda\subset A$ be a subgroup isomorphic to~$\ZZ^m$.
Then the quotient group~\mbox{$\Gamma=A/\Lambda$} is strongly Jordan.
\end{lemma}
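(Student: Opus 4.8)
The plan is the following. Since $\Gamma=A/\Lambda$ is abelian, it is Jordan for trivial reasons (in any finite subgroup $G\subset\Gamma$ one may take $G$ itself as the required normal abelian subgroup of index $1$), so the whole content of the assertion lies in the second half of Definition~\ref{definition:strongly-Jordan}: one has to produce a constant $R$, depending only on $n$ and $m$, such that every finite subgroup of $\Gamma$ is generated by at most $R$ elements. I claim that $R=m+n$ works. Note that one cannot simply invoke Lemma~\ref{lemma:group-theory} for the exact sequence $1\to\Lambda\to A\to\Gamma\to 1$, since $\Lambda$ is infinite; instead I would argue by pulling finite subgroups of $\Gamma$ back to $A$.

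So let $G\subset\Gamma$ be a finite subgroup, let $\pi\colon A\to\Gamma$ be the projection, and set $\tilde G=\pi^{-1}(G)\subset A$. Then $\Lambda\subset\tilde G$ and $\tilde G/\Lambda\cong G$ is finite, so $\tilde G$ is an extension of a finite group by $\ZZ^m\cong\Lambda$; in particular $\tilde G$ is a finitely generated abelian group in which $\Lambda$ has finite index, so its free rank equals $m$. By the structure theorem for finitely generated abelian groups, $\tilde G\cong\ZZ^m\oplus\tilde G_{\mathrm{t}}$, where the torsion subgroup $\tilde G_{\mathrm{t}}$ is a finite subgroup of $A_{\mathrm{t}}\cong(\QQ/\ZZ)^n$.

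The only mildly delicate step is to bound the number of generators of $\tilde G_{\mathrm{t}}$, and here I would use the structure of $(\QQ/\ZZ)^n$: it is the direct sum over all primes $p$ of $n$ copies of the Pr\"ufer $p$-group, which has a unique subgroup of order $p^k$ for each $k$. Hence the subgroup of elements of order dividing $p$ in $(\QQ/\ZZ)^n$ is isomorphic to $(\ZZ/p\ZZ)^n$, so every finite $p$-subgroup of $(\QQ/\ZZ)^n$ is generated by at most $n$ elements, and therefore so is every finite subgroup, since the number of generators of a finite abelian group is the maximum of the numbers of generators of its primary components. Applying this to $\tilde G_{\mathrm{t}}$ shows that $\tilde G$ is generated by at most $m+n$ elements, whence so is its quotient $G$. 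This gives the claim with $R=m+n$. I do not expect any real obstacle here: once the pull-back trick is in place, everything reduces to elementary facts about finitely generated abelian groups and about the divisible group $(\QQ/\ZZ)^n$.
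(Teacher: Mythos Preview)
Your argument is correct and is essentially identical to the paper's own proof: both observe that $\Gamma$ is abelian (hence Jordan), pull a finite subgroup back to $A$, decompose the preimage as torsion times free of rank at most $m$, and bound the torsion part by $n$ generators using $A_{\mathrm t}\cong(\QQ/\ZZ)^n$, arriving at the same constant $R=m+n$. Your justification for why finite subgroups of $(\QQ/\ZZ)^n$ need at most $n$ generators is in fact more explicit than the paper's, which simply asserts this.
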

\begin{proof}
The group $\Gamma$ is abelian and thus Jordan. Let $V\subset \Gamma$ be a finite
subgroup and let~\mbox{$\tilde V\subset A$} be its preimage. Clearly, $\tilde V$ is finitely generated
and can be decomposed into a direct product
$\tilde V=\tilde V_{\mathrm{t}}\times \tilde V_{\mathrm{f}}$ of its torsion and
torsion free parts.
In particular, $\tilde{V}_{\mathrm f}$ is a free abelian group.
Since $\tilde V_{\mathrm{f}}/(\tilde V_{\mathrm{f}}\cap \Lambda)$ is a finite
group,
one has
$$
\rk \tilde V_{\mathrm{f}}=\rk (\tilde V_{\mathrm{f}}\cap \Lambda)\le \rk
\Lambda=m.
$$
The group $\tilde V_{\mathrm{t}}$ is contained in $A_{\mathrm{t}}\cong
(\QQ/\ZZ)^n$
and so it can be generated by $n$ elements. Thus~$\tilde V$
can be generated by $n+m$ elements, and the images of these elements in $\Gamma$
generate the subgroup~$V$.
\end{proof}

\begin{lemma}\label{lemma:central-subgroup}
Let
\begin{equation}\label{eq:central-subgroup}
1\longrightarrow\Gamma'\longrightarrow\Gamma\longrightarrow\Gamma''
\end{equation}
be an exact sequence of groups. Suppose that $\Gamma'$ is central in $\Gamma$
(so that in particular~$\Gamma'$ is abelian) and
there exists a constant $R$ such that every finite subgroup of $\Gamma'$
is generated by at most $R$ elements.
Suppose also that there exists a constant $J$ such that for
every finite subgroup $G\subset\Gamma''$ there is a cyclic subgroup
$C\subset G$ of index at most $J$ (so that in particular
$\Gamma''$ is strongly Jordan). Then
the group $\Gamma$ is strongly Jordan.
\end{lemma}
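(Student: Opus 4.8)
The plan is to work with an arbitrary finite subgroup $G\subset\Gamma$ and analyze the central extension it inherits from~\eqref{eq:central-subgroup}. Put $G'=G\cap\Gamma'$ and let $G''$ be the image of $G$ in $\Gamma''$, so that
$$
1\longrightarrow G'\longrightarrow G\longrightarrow G''\longrightarrow 1
$$
is exact, the subgroup $G'$ is central in $G$, and $G'$ is finite and hence generated by at most $R$ elements by hypothesis. Since $G''$ is a finite subgroup of $\Gamma''$, it contains a cyclic subgroup $C$ of index at most $J$. Let $H\subset G$ be the preimage of $C$; then $H/G'\cong C$ and $[G:H]=[G'':C]\le J$.

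The key observation is that $H$ is abelian. Indeed, $H$ sits in a central extension $1\to G'\to H\to C\to 1$ with $C$ cyclic, so choosing a lift $h\in H$ of a generator of $C$ we see that $H$ is generated by $h$ together with the central subgroup $G'$; any two elements of such a generating set commute, so $H$ is abelian. In particular $H$ is a finite abelian group generated by at most $R+1$ elements.

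Finally I would replace $H$ by its \emph{normal core} $N=\bigcap_{g\in G}gHg^{-1}$, that is, the kernel of the action of $G$ on the coset space $G/H$ by left translations. Then $N$ is normal in $G$ with $[G:N]\le[G:H]!\le J!$, and $N$ is abelian since $N\subset H$; moreover, being a subgroup of a finite abelian group generated by $R+1$ elements, $N$ is itself generated by at most $R+1$ elements. As $J!$ depends only on $J$, this shows that $\Gamma$ is Jordan. For the bound on the number of generators, observe that $|G/N|\le J!$, so $G/N$ is generated by at most $\log_2(J!)$ elements; lifting these and adjoining generators of $N$, we conclude that $G$ is generated by at most $R+1+\log_2(J!)$ elements, a constant depending only on $R$ and $J$. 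Hence $\Gamma$ is strongly Jordan. I do not expect any serious obstacle here: the only point requiring care is that $C$ need not be normal in $G''$, which is precisely why one passes to the normal core of $H$ in $G$, and the generator count for $N$ uses only the elementary fact that a subgroup of a finite abelian group needs no more generators than the group itself.
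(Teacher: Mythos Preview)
Your argument is correct and follows essentially the same route as the paper: take the preimage in $G$ of the cyclic subgroup $C\subset G''$ and observe that a central extension of a cyclic group is abelian. The only difference is that you make explicit the passage to the normal core (the paper stops at the abelian subgroup $\bar G$ of index at most $J$ and tacitly uses the standard fact that this yields a \emph{normal} abelian subgroup of bounded index), and you spell out the generator count that the paper dismisses as ``obvious.''
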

\begin{proof}
Let $G\subset\Gamma$ be a finite subgroup. The exact
sequence~\eqref{eq:central-subgroup}
induces an exact sequence of groups
$$
1\longrightarrow G'\longrightarrow G\longrightarrow G'',
$$
where $G'$ is a subgroup of $\Gamma'$ (in particular, $G'$ is abelian),
while $G''$ is a subgroup of~$\Gamma''$. There is a subgroup $\bar{G}\subset G$
of index at most $J$ such that $\bar{G}$ contains $G'$, and the
quotient~$\bar{G}/G'$ is a cyclic group. To prove that the group $\Gamma$ is Jordan it is
enough
to check that $\bar{G}$ is an abelian group. The latter follows from the fact
that
$G'$ is a central subgroup of~$\bar{G}$.

The assertion about the bounded number of generators is obvious.
\end{proof}

\begin{lemma}\label{lemma:GL-quotient-Z}
Let $\Lambda$ be a finitely generated central subgroup of $\GL_2(\CC)$.
Then the quotient group $\Gamma=\GL_2(\CC)/\Lambda$ is strongly Jordan.
\end{lemma}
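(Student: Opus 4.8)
The plan is to exploit the central extension coming from the projection onto $\PGL_2(\CC)$ and then invoke Lemma~\ref{lemma:central-subgroup}. Since the center of $\GL_2(\CC)$ is the group $\CC^*$ of scalar matrices, the subgroup $\Lambda$ is contained in $\CC^*$, and the surjection $\GL_2(\CC)\to\PGL_2(\CC)=\GL_2(\CC)/\CC^*$ descends to a surjection $\Gamma\to\PGL_2(\CC)$ with kernel $\CC^*/\Lambda$. As $\CC^*$ is central in $\GL_2(\CC)$, the subgroup $\CC^*/\Lambda$ is central in $\Gamma$, so one obtains a central extension
$$
1\longrightarrow\CC^*/\Lambda\longrightarrow\Gamma\longrightarrow\PGL_2(\CC)\longrightarrow 1 .
$$
It then suffices to verify the two hypotheses of Lemma~\ref{lemma:central-subgroup} for this extension.

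For the kernel, I would check that finite subgroups of $\CC^*/\Lambda$ are generated by a bounded number of elements. If $V\subset\CC^*/\Lambda$ is finite, then its preimage $\tilde V\subset\CC^*$ contains $\Lambda$ as a subgroup of finite index, hence is finitely generated of the same rank as $\Lambda$; since a finite subgroup of $\CC^*$ is cyclic, the torsion subgroup of $\tilde V$ is cyclic, so $\tilde V$ --- and therefore $V$ --- is generated by at most $\rk\Lambda+1$ elements. (Equivalently, after splitting off the torsion subgroup of $\Lambda$ and passing to the quotient one reduces to the case $\Lambda\cong\ZZ^m$, which is exactly Lemma~\ref{lemma:diagonal-quotient-Z-1} applied with $A=\CC^*$, so that $n=1$.)

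For the quotient, I would use that $\PGL_2(\CC)$ satisfies the stronger condition demanded by Lemma~\ref{lemma:central-subgroup}: there is a constant $J$ such that every finite subgroup $G\subset\PGL_2(\CC)$ contains a cyclic subgroup of index at most $J$. This is immediate from the classical classification of finite subgroups of $\PGL_2(\CC)$, according to which such a $G$ is cyclic, dihedral, or isomorphic to $A_4$, $S_4$, or $A_5$; in each of these cases there is a cyclic subgroup of index at most $12$, so one may take $J=12$. (If one prefers not to quote this classification, one can instead lift $G$ to a finite subgroup of $\SL_2(\CC)$, apply the strong Jordan property of $\GL_2(\CC)$ to produce a normal abelian subgroup of bounded index, and observe that a finite abelian subgroup of $\GL_2(\CC)$ is diagonalizable and therefore maps to a cyclic subgroup of $\PGL_2(\CC)$.)

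Granting these two points, Lemma~\ref{lemma:central-subgroup} shows that $\Gamma$ is strongly Jordan, which is what we want. I do not expect any essential difficulty here: the argument is routine once the earlier lemmas are available, the only points requiring a little care being the identification of $\CC^*/\Lambda$ as a central subgroup of $\Gamma$ and the elementary bookkeeping with the torsion of $\Lambda$.
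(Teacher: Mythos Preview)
Your proposal is correct and follows essentially the same route as the paper: set up the central extension $1\to\CC^*/\Lambda\to\Gamma\to\PGL_2(\CC)\to 1$, verify via Lemma~\ref{lemma:diagonal-quotient-Z-1} that finite subgroups of $\CC^*/\Lambda$ are boundedly generated, use the classification of finite subgroups of $\PGL_2(\CC)$ for the cyclic-of-bounded-index condition, and conclude by Lemma~\ref{lemma:central-subgroup}. Your treatment is in fact slightly more careful than the paper's, since you explicitly handle the possible torsion in $\Lambda$ before invoking Lemma~\ref{lemma:diagonal-quotient-Z-1}.
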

\begin{proof}
We have an exact sequence of groups
$$
1\longrightarrow
\CC^*/\Lambda\longrightarrow\Gamma\longrightarrow\PGL_2(\CC)\longrightarrow 1.
$$
The group $\CC^*/\Lambda$ is a central subgroup of $\Gamma$.
Also, the group $\CC^*/\Lambda$ is strongly Jordan
by Lemma~\ref{lemma:diagonal-quotient-Z-1}.

On the other hand, we know from the classification of finite subgroups
of $\PGL_2(\CC)$ that every finite subgroup therein contains a cyclic subgroup
of bounded index, see Theorem~\ref{theorem:PGL2}.
Therefore, the assertion follows from
Lemma~\ref{lemma:central-subgroup}.
\end{proof}

We will need the following simple observation in~\S\ref{section:Hopf}.

\begin{lemma}\label{lemma:characteristic-subgroup}
Let $\Gamma$ be a group containing a subgroup $\Lambda\cong\ZZ$ of finite index.
Then there is a subgroup $\Lambda_0\cong\ZZ$ in $\Lambda$ that is characteristic in $\Gamma$.
\end{lemma}

\begin{proof}
The intersection
$$
\Lambda_0=\bigcap\limits_{\theta\in\Aut(\Gamma)}\theta(\Lambda)
$$
is a characteristic subgroup in $\Gamma$. Therefore, it is enough to check that
$\Lambda_0$ is not a trivial group.

Denote $r=[\Gamma:\Lambda]$. For any $\theta\in\Aut(\Gamma)$ the group $\theta(\Lambda)$ has index
$r$ in $\Gamma$. Hence the index of the intersection $\Lambda\cap\theta(\Lambda)$ in $\Lambda$
is at most $r$. This means that the intersection of $\Lambda$ with \emph{all}
groups $\theta(\Lambda)$, $\theta\in\Aut(\Gamma)$, contains an intersection of all these subgroups in~$\Lambda$.
Since a subgroup of given index in $\Lambda$ is unique, we see that the latter intersection is non-trivial.
\end{proof}

Most of the groups we will be working with in
the remaining part of the paper will be strongly Jordan.
However, we will only need to check Jordan property for them due
to the following result.

\begin{theorem}[{\cite[Theorem~1.3]{Riera-OddCohomology}}]
\label{theorem:Riera-generators}
For any compact manifold $X$ there is a constant~$R$ such that
every finite group acting effectively by diffeomorphisms of~$X$ can be generated by at most~$R$
elements.
\end{theorem}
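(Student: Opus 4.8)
The plan is to reduce the statement to two soft group-theoretic facts, after extracting a rank bound from the topology of~$X$ by Smith theory. First I would invoke the theorem of Mann and Su: for a compact manifold~$X$ there is a constant depending only on $\dim X$ and on $\sum_i\dim_{\FF_p}H^i(X;\FF_p)$ that bounds the rank $k$ of any elementary abelian group $(\ZZ/p)^k$ acting effectively (even merely topologically) on~$X$. The crucial remark is that this bound can be taken independent of the prime~$p$: since $X$ is compact, its integral cohomology is finitely generated, so $\sum_i\dim_{\FF_p}H^i(X;\FF_p)=\sum_i b_i(X)$ for all but finitely many~$p$ and is bounded for the finitely many exceptional ones. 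Hence there is a single constant $C=C(X)$ such that every elementary abelian subgroup of $\mathrm{Diff}(X)$ has rank at most~$C$. Consequently, if a finite group $G$ acts effectively on~$X$ and $P\le G$ is a Sylow $p$-subgroup, then every abelian subgroup $A\le P$ satisfies $d(A)\le C$, where $d(\,\cdot\,)$ denotes the minimal number of generators: indeed the socle $\mathrm{soc}(A)\cong(\ZZ/p)^{d(A)}$ is an elementary abelian subgroup of $G$, so it acts effectively on~$X$ and its rank is at most~$C$.

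The second ingredient is the standard fact, following from the theory of powerful $p$-groups of Lubotzky and Mann, that the minimal number of generators of a finite $p$-group~$P$ is bounded above by a function $\phi(r)$ of the maximal number~$r$ of generators of an abelian subgroup of~$P$, uniformly in the prime~$p$. Applying this with $r=C$, every Sylow subgroup of every finite $G$ acting effectively on~$X$ is generated by at most $\phi(C)$ elements. The third ingredient is the Guralnick--Lucchini theorem: a finite group all of whose Sylow subgroups are generated by $d$ elements is itself generated by $d+1$ elements. Combining these, every finite group acting effectively by diffeomorphisms of~$X$ is generated by at most $R:=\phi(C(X))+1$ elements, a constant depending only on~$X$.

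The main obstacle is the first step: the downstream group theory is classical and formal, but the real content is the Smith-theoretic (Borel formula) bound forcing elementary abelian $p$-groups on~$X$ to have bounded rank, together with the point that this bound stays finite upon taking the supremum over all primes, which is where compactness of~$X$ is genuinely used. (One could instead average a Riemannian metric to place~$G$ inside the compact Lie group $\mathrm{Isom}(X,g)$, whose dimension is at most $\tfrac12\dim X(\dim X+1)$ regardless of~$g$; but bounding the number of generators of its finite subgroups still requires controlling $\pi_0\big(\mathrm{Isom}(X,g)\big)$ uniformly in the metric, which is no easier than the Mann--Su input, so I would keep the route above.)
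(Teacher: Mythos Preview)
The paper does not prove this theorem at all: it is stated as a citation to Mundet i Riera's paper \cite{Riera-OddCohomology}, with no argument given. So there is no ``paper's own proof'' to compare against. Your outline is, in fact, essentially the argument Mundet i Riera uses in the cited source: the Mann--Su bound on the rank of elementary abelian $p$-groups acting on a compact manifold (uniform in~$p$ by finite generation of $H^*(X;\ZZ)$), then the Lubotzky--Mann type bound passing from elementary abelian rank to the number of generators of a Sylow $p$-subgroup, then the Guralnick--Lucchini theorem to assemble the Sylow information into a bound on $d(G)$.

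One small point worth tightening: the step ``bounded elementary abelian rank $\Rightarrow$ bounded $d(P)$ for a finite $p$-group $P$, uniformly in $p$'' is genuinely nontrivial and is exactly where the Lubotzky--Mann machinery (or the closely related results on $p$-groups of bounded sectional rank) enters. Your phrasing conflates ``rank of abelian subgroups'' with ``rank of elementary abelian subgroups''; these coincide for $p$-groups via the socle argument you gave, so the logic is fine, but in a write-up you should cite the precise statement you are invoking (e.g., that a finite $p$-group with $p$-rank at most $r$ has $d(P)$ bounded by a function of $r$ alone). The parenthetical about averaging a metric is a correct observation and a correct reason to discard that alternative.
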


\section{Minimal surfaces}
\label{section:minimal-surfaces}

In this section we recall the basic properties
of compact complex surfaces. Everything here (as well as in
\S\ref{section:automorphisms} below) is well known to experts,
but in some important cases we provide proofs for the reader's convenience.

A complex surface is a complex manifold of (complex) dimension~$2$.
Starting from this point we will always assume that our complex surfaces
are connected.
Throughout the paper $\KKK_X$ denotes
the canonical line bundle of a compact complex surface~$X$.
One has~\mbox{$\cc_1(\KKK_X)=-\cc_1(X)$}.
By $\ad(X)$ we denote the algebraic dimension of~$X$, i.e. the transcendence
degree of the field of meromorphic functions on~$X$.

\begin{definition}
Let $X$ and $Y$ be compact complex surfaces.
A proper holomorphic map~\mbox{$f\colon X\to Y$} is said to be a \textit{proper modification}
if there are closed
analytic subsets~\mbox{$Z_1\subsetneq X$} and $Z_2\subsetneq Y$ such that
the restriction $f_{X\setminus Z_1}\colon X\setminus Z_1\to Y\setminus Z_2$
is biholomorphic.
A \textit{birational} (or \textit{bimeromorphic}) \textit{map} $X\dashrightarrow
Y$ is an equivalence class
of diagrams
\[
\xymatrix{
&Z\ar[dr]^g\ar[dl]_f&
\\
X\ar@{-->}[rr]&&Y
}
\]
where $f$ and $g$ are proper modifications, modulo natural equivalence relation.
\end{definition}

Birational maps from a given compact complex surface
$X$ to itself form a group, which we will denote by~$\Bir(X)$.
As usual, we say that two complex surfaces are birationally
equivalent, or just birational,
if there exists a birational map between them.

\begin{remark}
If $X$ and $Y$ are birationally equivalent compact complex surfaces, then
the fields of meromorphic functions on $X$ and $Y$
are isomorphic.
The converse is not true if the algebraic dimension of
$X$ (and thus also of $Y$) is less than~$2$.
\end{remark}

There are easy ways to find whether a given compact complex surface is projective.

\begin{theorem}[{see~\cite[Corollary~IV.6.5]{BHPV-2004}}]
A compact complex surface $X$ is projective if and only
if~\mbox{$\ad(X)=2$}. In particular, any compact complex surface birational to a projective
one is itself projective.
\end{theorem}

\begin{lemma}[{see \cite[Theorem~IV.6.2]{BHPV-2004}}]
\label{lemma:projective}
Let $X$ be a compact complex surface. Suppose that there is a line bundle
$\mathcal{L}$ on $X$ such that $\mathcal{L}^2>0$. Then $X$ is projective.
\end{lemma}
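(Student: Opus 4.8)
The plan is to deduce from $\mathcal{L}^2>0$ that the algebraic dimension $a(X)$ equals $2$, and then to invoke the classical fact that a compact complex surface of algebraic dimension $2$ is projective --- equivalently, that a smooth Moishezon surface is projective (see~\cite[\S\,IV.6]{BHPV-2004}). The heart of the matter is the following assertion, which I would establish first: if $D$ is an effective divisor on a compact complex surface $X$ with $D^2>0$, then $\dim_{\CC}H^0\big(X,\OOO_X(mD)\big)$ grows quadratically in $m$, and hence $a(X)=2$. To see this, note that Hirzebruch--Riemann--Roch gives
\[
\chi\big(X,\OOO_X(mD)\big)=\frac{m^2}{2}\,D^2-\frac{m}{2}\,D\cdot\KKK_X+\chi(\OOO_X),
\]
which tends to $+\infty$ quadratically since $D^2>0$, while Serre duality identifies $h^2\big(X,\OOO_X(mD)\big)$ with $h^0\big(X,\KKK_X\otimes\OOO_X(-mD)\big)$. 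Multiplication by the appropriate power of the section of $\OOO_X(D)$ cutting out $D$ yields injections $H^0\big(X,\KKK_X\otimes\OOO_X(-m'D)\big)\hookrightarrow H^0\big(X,\KKK_X\otimes\OOO_X(-mD)\big)$ for all $m'\ge m\ge 0$; taking $m=0$ bounds $h^2\big(X,\OOO_X(mD)\big)$ by $h^0(X,\KKK_X)$ uniformly in $m$. Since $h^0\ge\chi-h^2$, the space $H^0\big(X,\OOO_X(mD)\big)$ does have quadratic growth, and it is standard that this forces $a(X)=2$, the Iitaka dimension of $\OOO_X(D)$ being equal to $2$ and bounded above by $a(X)$.

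It then remains to produce such a divisor $D$ from $\mathcal{L}$. Applying Riemann--Roch to powers of $\mathcal{L}$ gives $\chi\big(X,\mathcal{L}^{\otimes n}\big)=\frac{n^2}{2}\,\mathcal{L}^2+O(n)\to+\infty$, so for $n$ large one has $h^0\big(X,\mathcal{L}^{\otimes n}\big)+h^2\big(X,\mathcal{L}^{\otimes n}\big)>0$, and by Serre duality $h^2\big(X,\mathcal{L}^{\otimes n}\big)=h^0\big(X,\KKK_X\otimes\mathcal{L}^{\otimes(-n)}\big)$. Hence either some positive power $\mathcal{L}^{\otimes n_0}$ admits a nonzero section, in which case its divisor $D$ satisfies $D^2=n_0^2\,\mathcal{L}^2>0$; or $h^0\big(X,\mathcal{L}^{\otimes n}\big)=0$ for every $n\ge 1$, in which case $h^0\big(X,\KKK_X\otimes\mathcal{L}^{\otimes(-n)}\big)>0$ for all large $n$, and choosing $n$ moreover large enough that
\[
\big(\KKK_X\otimes\mathcal{L}^{\otimes(-n)}\big)^2=\KKK_X^2-2n\,\KKK_X\cdot\mathcal{L}+n^2\mathcal{L}^2>0,
\]
the divisor $D$ of such a section again has $D^2>0$. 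In either case the assertion of the previous paragraph applies, so $a(X)=2$, and $X$ is projective.

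The step I expect to be the main obstacle is the first one, and in particular the correct way to handle the error term. One should not expect $h^0\big(X,\mathcal{L}^{\otimes n}\big)$ itself to grow; rather, after passing through Serre duality one genuinely obtains a big effective divisor --- coming either from $\mathcal{L}$ or from the auxiliary bundle $\KKK_X\otimes\mathcal{L}^{\otimes(-n)}$ --- and the subtlety is that $h^2$ need not be made to vanish but only controlled uniformly, which the monotonicity of $h^0\big(X,\KKK_X\otimes\OOO_X(-mD)\big)$ in $m$ supplies. The remaining ingredient --- that algebraic dimension $2$ forces projectivity for surfaces --- is classical, and I would simply cite it.
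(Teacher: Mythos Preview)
The paper does not give its own proof of this lemma: it is stated with a reference to \cite[\S\,IV.6]{BHPV-2004} and no argument, so there is nothing to compare your proposal against directly. Your argument is correct and is essentially the standard one found in that reference: use Riemann--Roch plus Serre duality to produce an effective divisor $D$ with $D^2>0$ (either from a positive power of $\mathcal{L}$ or from $\KKK_X\otimes\mathcal{L}^{\otimes(-n)}$), then bound $h^2\big(\OOO_X(mD)\big)$ uniformly via the injection $H^0\big(\KKK_X\otimes\OOO_X(-mD)\big)\hookrightarrow H^0(\KKK_X)$ to conclude that $h^0\big(\OOO_X(mD)\big)$ grows quadratically, whence $\ad(X)=2$; finally invoke the Chow--Kodaira theorem that a compact complex surface of algebraic dimension $2$ is projective.

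One small remark: your monotonicity step uses that multiplication by the defining section of $D$ is injective on global sections, which requires $X$ to be irreducible. This is fine here, since the paper's standing convention (stated at the beginning of \S\ref{section:minimal-surfaces}) is that all complex surfaces are connected, hence irreducible.
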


A \emph{$(-1)$-curve} on a compact complex surface is a smooth
rational curve with self-intersection equal to~$-1$.
A compact complex surface is \emph{minimal} if it does not contain
$(-1)$-curves. The following fact is well known, see e.g. Corollary~III.2.4,  Claim on p.~99,
and the first paragraph
of~\S\,VI.7 in~\cite{BHPV-2004}). For convenience of the reader we provide its short proof.

\begin{proposition}\label{proposition:Bir-vs-Aut}
Let $X$ be a minimal surface. Suppose that $X$ is neither
rational nor ruled. Then every birational map from an arbitrary compact complex
surface $X'$ to $X$ is a proper modification. In particular, $X$ is the unique minimal model in its
class of birational equivalence, and
$\Bir(X)=\Aut(X)$.
\end{proposition}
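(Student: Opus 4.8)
The plan is to reduce everything to the elimination-of-indeterminacy theorem for maps between surfaces together with the negativity of contracted curves. Let $\varphi\colon X'\dashrightarrow X$ be a birational map, represented by a diagram with a compact complex surface $Z$ and proper modifications $f\colon Z\to X'$ and $g\colon Z\to X$. We want to show $\varphi$ is itself a proper modification; it suffices to show that after replacing $Z$ by a suitable proper modification, the map $f$ becomes an isomorphism (equivalently, $g$ factors through $f$), since then $\varphi = g\circ f^{-1}$ exhibits $\varphi$ as a proper modification. By the structure theory of proper modifications of surfaces (see \cite[\S\,II.(8.1) and \S\,III]{BHPV-2004}), any proper modification $g\colon Z\to X$ factors as a finite composition of blow-downs of $(-1)$-curves; so I would argue by induction on the number of blow-ups needed.

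First I would set up the key contraction lemma: if $g\colon Z\to X$ contracts a curve $E\subset Z$ to a point, then $E$ is covered by curves of negative self-intersection, and in particular $Z$ contains a $(-1)$-curve inside the exceptional locus of $g$. Now suppose for contradiction that $\varphi$ is not a proper modification; then choosing the diagram so that $Z$ dominates the graph with $f$ and $g$ each a minimal number of blow-ups, there is a $(-1)$-curve $C\subset Z$ contracted by $f$. The crucial point is that $C$ cannot also be contracted by $g$: if it were, one could contract $C$ on $Z$ and obtain a smaller diagram representing the same $\varphi$, contradicting minimality. So $g(C)\subset X$ is a curve, necessarily a smooth rational curve, and since $C^2=-1$ on $Z$ and $g$ is a composition of blow-downs not contracting $C$, the image curve $g(C)$ has self-intersection $g(C)^2\geq C^2 = -1$, with equality only if $g$ is an isomorphism near $C$. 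If $g$ is an isomorphism near $C$, then $X$ contains the $(-1)$-curve $g(C)$, contradicting minimality of $X$. If instead $g(C)^2\geq 0$, then $g(C)$ is a smooth rational curve on the surface $X$ with non-negative self-intersection, and by the classical criterion (see \cite[\S\,IV.(2.4) and \S\,VI]{BHPV-2004}) such a curve makes $X$ rational or ruled — for instance, if $g(C)^2=0$ the linear system or the associated fibration exhibits $X$ as ruled over a curve, and if $g(C)^2>0$ then $g(C)$ gives a line bundle of positive self-intersection, so $X$ is projective by Lemma~\ref{lemma:projective}, and a smooth rational curve of positive self-intersection on a projective surface forces it to be rational or ruled. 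Either way we contradict the hypothesis that $X$ is neither rational nor ruled.

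Having shown every birational map to $X$ from any compact complex surface is a proper modification, I would deduce the remaining two assertions. If $X''$ is another minimal model birational to $X$, the birational map $X''\dashrightarrow X$ is a proper modification, hence a composition of blow-downs; but $X''$ being minimal has no $(-1)$-curve to blow down, so the map is an isomorphism, giving uniqueness of the minimal model. Finally, applying the statement with $X'=X$: any element of $\Bir(X)$ is a proper modification $X\to X$, hence a composition of blow-downs from $X$ to $X$; since $X$ is minimal this composition is empty, i.e.\ the map is an automorphism, so $\Bir(X)=\Aut(X)$.

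The main obstacle I expect is the self-intersection bookkeeping in the middle step: carefully tracking how $C^2$ changes under the blow-downs comprising $g$ and concluding that a curve contracted by $f$ but not by $g$ yields a smooth rational curve on $X$ of self-intersection $\geq -1$, together with citing the precise classification statement that a smooth rational curve of self-intersection $\geq 0$ on a compact complex surface forces rationality or ruledness. Both ingredients are standard (they are exactly the content of the blow-down and ruled-surface sections of \cite{BHPV-2004}), but assembling them cleanly — especially ensuring the minimality-of-the-diagram reduction is legitimate — is where the real care is needed; the rest is formal.
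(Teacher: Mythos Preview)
Your argument has a genuine gap at the step where you assert that $g(C)$ is a \emph{smooth} rational curve. The image of a $(-1)$-curve under a birational morphism is rational, but it need not be smooth: if one of the exceptional curves $E$ blown down by $g$ meets (the current image of) $C$ with intersection number $m\geq 2$, then after contracting $E$ the image of $C$ acquires a point of multiplicity $m$. Without smoothness your dichotomy collapses, because a \emph{singular} rational curve $D$ with $D^2\geq 0$ does not force $X$ to be rational or ruled --- for instance a nodal rational curve with $D^2=0$ sits on any elliptic K3 surface as an $I_1$-fiber. Your appeal to ``the linear system or the associated fibration exhibits $X$ as ruled'' when $D^2=0$ is likewise unjustified in the non-projective setting; Riemann--Roch only gives $h^0(\mathcal{O}_X(D))\geq\chi(\mathcal{O}_X)+1$, which is not enough on class~VII surfaces where $\chi(\mathcal{O}_X)=0$.

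The paper sidesteps this by never passing to the image on $X$. Instead it stops the contraction $Z\to X$ at an intermediate surface $S$ just before the first exceptional curve meeting (the image of) $C$ is blown down; on $S$ one then has two intersecting $(-1)$-curves $C_1,C_2$, and a short computation shows that either $(C_1+C_2)^2>0$ or $\big(\KKK_S\otimes\OOO_S(-nC_1-nC_2)\big)^2>0$ for $n\gg 0$. Lemma~\ref{lemma:projective} then forces $S$, hence $X$, to be projective, reducing to the classical case.

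Your strategy can be repaired, but the right invariant is $K_X\cdot g(C)$ rather than $g(C)^2$. From $K_Z\cdot C=-1$ and $K_Z=g^*K_X+(\text{effective exceptional})$ the projection formula yields $K_X\cdot g(C)\leq -1$. One then checks that on any minimal compact complex surface which is neither rational nor ruled, $K_X$ is nef on curves: for $\varkappa(X)\geq 0$ this is the standard argument (a $K_X$-negative curve on a minimal surface with effective pluricanonical system is a $(-1)$-curve), and for class~VII surfaces it follows from the negative-definiteness of the intersection form on $H^2$ together with adjunction. That gives the contradiction directly, with no smoothness hypothesis on $g(C)$.
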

\begin{proof}
Suppose that
\[
\xymatrix@R=8pt{
&Z\ar[dr]^f\ar[dl]_g&
\\
X'\ar@{-->}[rr]&&X
}
\]
is a birational map that is not a proper modification.
We may assume that there are no $(-1)$-curves that are simultaneously contracted by $f$ and $g$.
Then
there exists a $(-1)$-curve~$C$ in~$Z$ contracted by $g$ but not contracted by~$f$.
Thus $C$ meets a one-dimensional fiber~$f^{-1}(x)$ for some point~\mbox{$x\in X$}, since otherwise $X$ would contain a
$(-1)$-curve.

First, we consider the case when the surface $X$ is projective.
Since $X$ is minimal and not ruled,  the canonical class $K_X$ must be numerically
effective  \cite[Theorem~VI.2.1]{BHPV-2004}.
Write
\[
K_Z\sim f^*K_X+\sum a_i E_i,
\]
where $E_i$ are $f$-exceptional curves
and $a_i$ are positive integers. Since $K_Z\cdot C<0$ and~\mbox{$f^*K_X\cdot C\ge 0$}, we have $\sum a_i E_i\cdot C<0$.
Thus $C$ is a component of the $f$-exceptional locus. This  contradicts our assumptions.

Now we consider the case when the surface $X$ is  not projective.
Contracting $(-1)$-curves in $f^{-1}(x)$ consecutively, we get a surface
$S$ with a proper modification $h\colon Z\to S$, and
a proper modification~\mbox{$t\colon S\to X$} such that $C_1=h(C)$ is a
$(-1)$-curve
and there exists
another $(-1)$-curve $C_2$ meeting $C_1$ and contracted by $t$.
If $C_1\cdot C_2>1$, then $(C_1+C_2)^2>0$ and the surface $S$ is
projective by Lemma~\ref{lemma:projective}.
Assume that $C_1\cdot C_2=1$. Then for $n\gg0$ we have
\[
\cc_1\left(\KKK_S\otimes\OOO_S(-nC_1-nC_2)\right)^2=\cc_1(S)^2+4n>0,
\]
so that the surface $S$ is again projective by Lemma~\ref{lemma:projective}.
The obtained contradiction completes the proof.
\end{proof}

Given a compact complex surface $X$, we can consider its
\emph{pluricanonical linear systems}~$|\KKK_X^{\otimes m}|$.
If such a linear system is not empty for $m\gg 0$, it defines a rational \emph{pluricanonical
map}. The dimension of its image is called the \emph{Kodaira dimension} of $X$ and
is denoted by~$\varkappa(X)$. If the linear system $|\KKK_X^{\otimes m}|$ is empty for all~$m>0$,
we put~\mbox{$\varkappa(X)=-\infty$}.
By $\bb_i(X)$ we denote the $i$-th
Betti number of~$X$. By~$\hh^{p,q}(X)$ we denote the Hodge numbers~\mbox{$\hh^{p,q}=\dim H^q(X,\Omega^p_X)$},
where~$\Omega^p_X$ is the sheaf of holomorphic $p$-forms on~$X$.

The following is the famous Enriques--Kodaira classification of
compact complex surfaces, see e.g.~\cite[Chapter~VI]{BHPV-2004}.

\begin{theorem}\label{theorem:classification}
Let $X$ be a minimal compact complex surface.
Then $X$ is of one of the following types.
\par\medskip\noindent
{\rm
\setlength{\extrarowheight}{1pt}
\newcommand{\heading}[1]{\multicolumn{1}{c|}{#1}}
\newcommand{\headingl}[1]{\multicolumn{1}{c}{#1}}
\begin{tabularx}{\textwidth}{c|l|X|X|X}
$\varkappa(X)$&\heading{type} & \heading{$\ad(X)$} & \heading{$\bb_1(X)$} &
\headingl{$\chit(X)$}
\\\hline
\multirow3{*}{$-\infty$}&rational surfaces& $2$& $0$ &$3$, $4$
\\
& ruled surfaces of genus $g>0$& $2$ &$2g$&$4(1-g)$
\\
& surfaces of class VII& $0$, $1$&$1$ &$\ge 0$
\\\hline
\multirow6{*}{$0$}& complex tori& $0$, $1$, $2$&$4$& $0$
\\
& K3 surfaces& $0$, $1$, $2$& $0$&$24$
\\
& Enriques surfaces& $2$& $0$&$12$
\\
&bielliptic surfaces & $2$& $2$& $0$
\\
&primary Kodaira surfaces& $1$&$3$& $0$
\\
&secondary Kodaira surfaces& $1$& $1$& $0$
\\\hline
\multirow1{*}{$1$}&properly elliptic surfaces& $1$, $2$&& $\ge 0$
\\\hline
\multirow1{*}{$2$}&surfaces of general type & $2$&$\equiv 0\mod 2$& $> 0$
\\
\end{tabularx}
}
\end{theorem}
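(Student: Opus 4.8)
The statement is the Enriques--Kodaira classification, so the plan is to reconstruct in outline the classical structure theory of minimal compact complex surfaces. The overall strategy is to stratify by the Kodaira dimension $\varkappa(X)\in\{-\infty,0,1,2\}$ and, inside each stratum, to read off the geometric structure together with the invariants $\ad(X)$, $\bb_1(X)$, $\chit(X)$ from three standard tools: Riemann--Roch on surfaces, Noether's formula $\chi(\OOO_X)=\tfrac1{12}\bigl(\cc_1(X)^2+\chit(X)\bigr)$, and the Hodge index theorem. A point to keep in mind throughout is that $X$ is not assumed K\"ahler, so $\bb_1(X)$ may be odd; the parity of $\bb_1(X)$ has to be tracked, since a surface is projective (equivalently $\ad(X)=2$, so that Lemma~\ref{lemma:projective} becomes available) only when $\bb_1(X)$ is even.

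I would first dispose of the two extreme values. If $\varkappa(X)=2$, then for $m\gg 0$ Riemann--Roch gives $\cc_1(\KKK_X^{\otimes m})^2>0$, so $X$ is projective by Lemma~\ref{lemma:projective}, hence $\ad(X)=2$ and $\bb_1(X)$ is even; positivity of $\KKK_X^2$ together with the Bogomolov--Miyaoka--Yau inequality (which forbids $\chit(X)\le 0$) then gives $\chit(X)>0$. If $\varkappa(X)=-\infty$, all plurigenera vanish, and one distinguishes by parity of $\bb_1(X)$: when $\bb_1(X)$ is even, Castelnuovo's rationality criterion and the Enriques ruledness criterion show that $X$ is rational --- so $X\cong\P^2$ with $\chit=3$, or a minimal Hirzebruch surface with $\chit=4$, and $\bb_1=0$, $\ad=2$ in both cases --- or ruled over a curve of genus $g>0$, whence $\bb_1=2g$, $\chit=4(1-g)$, $\ad=2$; when $\bb_1(X)$ is odd one proves $\bb_1(X)=1$, which is exactly the class~VII case, where $\chit(X)=\bb_2(X)\ge 0$ and $\ad(X)\le 1$ because $X$ is non-projective, the finer value of $\ad(X)$ and any deeper description being left open.

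The substance lies in the middle range. For $\varkappa(X)=0$ the crucial structural input is that $\KKK_X$ is torsion, with the explicit bound $12\KKK_X\sim 0$; then $\cc_1(X)^2=0$, $\chi(\OOO_X)\ge 0$, and one splits by $\bb_1(X)$: $\bb_1=4$ forces a complex torus; $\bb_1=0$ a K3 surface (if $\KKK_X\sim 0$, with $\chit=24$) or an Enriques surface (if $2\KKK_X\sim 0$ but $\KKK_X\not\sim 0$, with $\chit=12$); $\bb_1=2$ a bielliptic surface, a free finite quotient of a product of two elliptic curves, necessarily projective; $\bb_1=3$ a primary Kodaira surface, a principal elliptic bundle over an elliptic curve; and $\bb_1=1$ a secondary Kodaira surface, a finite free quotient of a primary one. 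For the tori and the Kodaira surfaces minimality again yields $\chit=0$, and the listed values of $\ad(X)$ are obtained from the standard description of the algebraic reduction in each case. For $\varkappa(X)=1$, the pluricanonical map for $m\gg0$ has one-dimensional image $B$; Stein factorisation and adjunction identify the generic fibre as a smooth elliptic curve, so $X$ is a properly elliptic surface over $B$, whence $\ad(X)\ge 1$ (with $\ad(X)=2$ precisely in the projective instances) and $\chit(X)\ge 0$ as a sum of Kodaira's non-negative local contributions at the singular fibres.

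The hard part is exactly what I have been invoking as ``standard input'': Castelnuovo's rationality criterion together with the ruledness dichotomy in the $\varkappa=-\infty$ case, and the torsion statement $12\KKK_X\sim 0$ in the $\varkappa=0$ case; both rest on delicate Riemann--Roch estimates, and in the non-K\"ahler regime also on a careful study of algebraic reductions and of elliptic fibrations of non-algebraic surfaces. All of this is classical and is carried out in full in~\cite[Chapter~VI]{BHPV-2004}, which we follow.
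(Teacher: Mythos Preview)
The paper does not prove this theorem at all: it is stated as background with the reference ``see e.g.~\cite[Chapter~VI]{BHPV-2004}'' and is used throughout as a black box. Your proposal is a reasonable outline of the classical argument one finds in that reference, so in that sense there is nothing to compare; you are reconstructing precisely the material the authors chose to cite rather than reproduce.
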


\section{Automorphisms}
\label{section:automorphisms}

In this section
we recall some important general facts about automorphisms
of complex spaces.

Let $U$ be a reduced complex space, see e.g. \cite{Serre1956} or
\cite{Malgrange1968}
for a definition and basic properties.
Recall that a complex space is called \emph{irreducible}
if it cannot be represented as a union of two proper closed analytic subsets.
We denote by $T_{P, U}$ the Zariski
tangent space
(see \cite[\S2]{Malgrange1968})
to $U$ at a point $P\in U$.
If a group $\Gamma\subset\Aut(X)$ has a fixed point~\mbox{$P\in X$}, then
$\Gamma$ naturally acts on the local ring $\OOO_{P,X}$ and the tangent space $T_{P,X}$
so that the action on $T_{P,X}$ is linear.

The following fact is well-known (see e.g. \cite{Cartan} or \cite[\S2.2]{Akhiezer1995}).
\begin{theorem}\label{theorem:linearization}
Let $X$ be a Hausdorff (reduced)
complex space,
and~\mbox{$\Gamma\subset\Aut(X)$} be a finite group.
Suppose
that $\Gamma$ has a fixed point $P$ on $X$.
Then there exist
$\Gamma$-invariant neighborhoods $U$ of $P$
in $X$ and $V$ of $0$ in $T_{P,X}$, and a $\Gamma$-equivariant closed embedding~\mbox{$U \hookrightarrow V$}.
\end{theorem}

\begin{corollary}
\label{corollary:fixed-point}
Let $X$ be an irreducible Hausdorff reduced complex space,
and $\Gamma\subset\Aut(X)$ be a finite group. Suppose
that $\Gamma$ has a fixed point $P$ on $X$. Then the natural
representation
$$
\Gamma\longrightarrow\GL\big(T_{P,X}\big)
$$
is faithful.
\end{corollary}

\begin{proof}
Choose an arbitrary transformation $f$ from the kernel of the action of $\Gamma$ on $T_{P,X}$.
By Theorem~\ref{theorem:linearization}, there exists a neighborhood
$U$ of $P$ in $X$ such that $f$ restricts to the identity transformation on $U$.

The fixed point locus $\Fix(f)$ of $f$ is a closed analytic subset in $X$.
Indeed, since~$X$ is Hausdorff, it can be covered by $f$-invariant
charts isomorphic to open subsets of~$\CC^N$ for some positive integer~$N$,
and in every such chart the fixed point locus is given by vanishing of certain
equations. Since $\Fix(f)$ contains the open subset $U$ and $X$ is irreducible, we conclude that
$\Fix(f)=X$. This means that the kernel of the action of $\Gamma$ on $T_{P,X}$
is trivial.
\end{proof}

\begin{remark}
One cannot drop the assumption that $X$ is irreducible in
Corollary~\ref{corollary:fixed-point}. Indeed, the assertion fails for
the variety given by equation~\mbox{$xy=0$} in~$\mathbb{A}^2$
with coordinates~$x$ and~$y$, the point $P$ with
coordinates~\mbox{$x=1$} and~\mbox{$y=0$}, and the group $\Gamma\cong\ZZ/2\ZZ$
whose generator acts by~\mbox{$(x,y)\mapsto (x,-y)$}.
Similarly, the assertion fails for the simplest example
of a non-Hausdorff reduced complex space, namely, for two copies of $\mathbb{A}^1$
glued along the common open subset~\mbox{$\mathbb{A}^1\setminus\{0\}$}, and the natural
involution acting on this space.
\end{remark}

Corollary~\ref{corollary:fixed-point} easily implies the following result.

\begin{corollary}
\label{corollary:fixed-point1}
Let $X$ be an irreducible Hausdorff reduced complex space,
and~\mbox{$\Delta\subset\Aut(X)$} be a subgroup. Suppose
that $\Delta$ has a fixed point $P$ on $X$, and let
$$
\varsigma\colon\Delta\longrightarrow\GL\big(T_{P,X}\big)
$$
be the natural representation.
Suppose that there is a subgroup $\Gamma\subset\Delta$ of finite
index such that the restriction $\varsigma\vert_{\Gamma}$ is a group monomorphism.
Then $\varsigma$ is an embedding as well.
\end{corollary}
\begin{proof}
Let $\Delta_0\subset\Delta$ be the kernel of $\varsigma$. Since
$[\Delta:\Gamma]<\infty$, we see that $\Delta_0$ is finite.
Thus~$\Delta_0$ is trivial by Corollary~\ref{corollary:fixed-point}.
\end{proof}

Another application of Corollary~\ref{corollary:fixed-point} is as follows.

\begin{lemma}\label{lemma:rational-curve}
Let $X$ be a compact complex surface.
Suppose that there is a finite non-empty $\Aut(X)$-invariant set $\mathcal{S}$ of curves on $X$
such that $\mathcal{S}$ does not contain smooth elliptic curves.
Then the group $\Aut(X)$ is Jordan.
\end{lemma}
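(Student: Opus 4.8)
The plan is to analyze the action of $\Aut(X)$ on the finite set $\mathcal{S}$ and on its irreducible components, reducing everything to a fixed-point situation where Proposition~\ref{proposition:fixed-point} applies. First, since $\mathcal{S}$ is finite and $\Aut(X)$-invariant, the action of $\Aut(X)$ on $\mathcal{S}$, and more precisely on the finite set of irreducible components of the curves in $\mathcal{S}$, gives a homomorphism to a symmetric group $S_N$ with finite kernel-of-interest; by Lemma~\ref{lemma:group-theory}(i) it suffices to prove that the subgroup $\Gamma\subset\Aut(X)$ stabilizing each individual irreducible component $C$ of each curve in $\mathcal{S}$ is Jordan (strong Jordan will then follow automatically from Theorem~\ref{theorem:Riera-generators}, but only the Jordan property need be checked).

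Next, I would fix one such irreducible component $C$ and study the induced action of $\Gamma$ on $C$ (or rather on its normalization $\tilde C$). Since $C$ is a curve that is not a smooth elliptic curve, its normalization $\tilde C$ is either $\mathbb{P}^1$, or a smooth curve of genus $\ge 2$, or a smooth curve of genus $1$ \emph{only if $C$ is singular or $\mathcal{S}$ forces extra structure}; in the genus $\ge 2$ case $\Aut(\tilde C)$ is finite, and in the $\mathbb{P}^1$ case $\Aut(\tilde C)=\PGL_2(\CC)$ is Jordan. If $\tilde C$ has genus $1$ but $C$ itself is singular (nodal or cuspidal), the automorphisms preserving the preimages of the singular points form a proper subgroup with a fixed point, again reducing to finite or linear data. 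In all cases the image of $\Gamma$ in $\Aut(\tilde C)$ is Jordan, so by Lemma~\ref{lemma:group-theory}(i) once more it remains to handle the subgroup $\Gamma_0\subset\Gamma$ acting trivially on $\tilde C$, hence fixing $C$ pointwise.

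Finally, for this subgroup $\Gamma_0$ fixing a curve $C$ pointwise, I would pick a smooth point $P\in C$ and apply Proposition~\ref{proposition:fixed-point}: the representation $\Gamma_0\to\GL(T_{P,X})$ is faithful on every finite subgroup. Since $\Gamma_0$ fixes $C$ pointwise, the tangent direction along $C$ inside $T_{P,X}\cong\CC^2$ is fixed, so the image lies in a Borel subgroup of $\GL_2(\CC)$, in particular in $\GL_2(\CC)$, which is Jordan by Theorem~\ref{theorem:Jordan}. Therefore every finite subgroup of $\Gamma_0$ is Jordan-bounded with a uniform constant, so $\Gamma_0$ is Jordan, and unwinding the two applications of Lemma~\ref{lemma:group-theory}(i) shows $\Aut(X)$ is Jordan. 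The main obstacle I anticipate is the bookkeeping around normalizations and singular curves: one must be careful that passing to $\tilde C$ is compatible with the $\Gamma$-action (it is, by functoriality of normalization) and that the ``not a smooth elliptic curve'' hypothesis genuinely excludes the only bad case, namely an $\Aut(X)$-invariant smooth genus-one curve on which $\Aut(X)$ could act through an infinite non-Jordan extension of translations — but here each component is separately stabilized, and a smooth elliptic curve is excluded by assumption, while a singular rational or higher-genus curve always yields either a finite or a linear (hence Jordan) automorphism group, so the argument closes.
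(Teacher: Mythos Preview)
Your overall strategy---reduce to the stabilizer of a single curve $C$, then use the hypothesis that $C$ is not a smooth elliptic curve to eventually produce a fixed point and apply Proposition~\ref{proposition:fixed-point}---matches the paper's. However, there is a genuine gap in your second reduction step.

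You write that the image of $\Gamma$ in $\Aut(\tilde C)$ is Jordan, ``so by Lemma~\ref{lemma:group-theory}(i) once more it remains to handle the subgroup $\Gamma_0$.'' But Lemma~\ref{lemma:group-theory}(i) requires the quotient $\Gamma''$ to have \emph{bounded finite subgroups}, not merely to be Jordan. When $\tilde C\cong\PP^1$ (the case that arises whenever $C$ is rational, e.g.\ any component of a degenerate elliptic fiber), the image lands in $\PGL_2(\CC)$, which is Jordan but certainly does not have bounded finite subgroups. Lemma~\ref{lemma:group-theory}(ii) does not rescue you either: your kernel $\Gamma_0$, acting trivially on $C$, embeds its finite subgroups into the stabilizer of a tangent vector in $\GL_2(\CC)$, and this contains arbitrarily large cyclic groups (scalars in the normal direction), so $\Gamma_0$ does not have bounded finite subgroups. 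Thus the two-step kernel/image splitting fails precisely in the most important case.

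The paper avoids this by not splitting into kernel and image at all. Instead, for a finite subgroup $G\subset\Aut_C(X)$ it argues directly that $G$ has a subgroup of bounded index fixing a \emph{point} of $C$: if $C$ is singular this is obvious (finite singular locus); if $C$ is smooth of genus $\ge 2$ use the Hurwitz bound; if $C\cong\PP^1$ use the classification of finite subgroups of $\PGL_2(\CC)$, which shows each contains a cyclic subgroup of bounded index, and a cyclic group has a fixed point on $\PP^1$. Then Proposition~\ref{proposition:fixed-point} embeds that bounded-index subgroup into $\GL_2(\CC)$, and Theorem~\ref{theorem:Jordan} finishes. The key difference is that the paper passes to a bounded-index subgroup \emph{with a fixed point on $X$} in one step, rather than trying to control kernel and image separately.
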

\begin{proof}
Let $C$ be one of the curves from $\mathcal{S}$. Then the group $\Aut_C(X)$
of automorphisms of~$X$ that preserve the curve $C$ has finite
index in $\Aut(X)$. Since $C$ is not a smooth elliptic curve,
there is
a constant $B=B(C)$ such that every finite subgroup of~\mbox{$\Aut_C(X)$}
contains a subgroup of index at most $B$ that fixes some point on $C$.
Indeed, if $C$ is singular, this is obvious; if $C$ is a smooth rational curve, this follows from
Theorem~\ref{theorem:PGL2}. If~$C$
is a smooth curve of genus $g\ge 2$, this follows from the fact that the index
of the kernel of the action on $C$ in the group $\Aut_C(X)$ is at most $|\Aut(C)|$,
which does not exceed the Hurwitz bound~$84(g-1)$, see for instance~\mbox{\cite[Exercise~IV.2.5]{Hartshorne}}.
Now Corollary~\ref{corollary:fixed-point} implies that every finite subgroup of
$\Aut_C(X)$ contains a subgroup of index at most $B$ that is embedded into $\GL_2(\CC)$.
Therefore, the group $\Aut_C(X)$ is Jordan by Theorem~\ref{theorem:Jordan}, and hence
the group $\Aut(X)$ is Jordan as well.
\end{proof}

Using Theorem~\ref{theorem:linearization}, one can also deduce the following facts.

\begin{corollary}\label{corollary:fixed-locus:smooth}
Let $X$ be a complex manifold,
and $\Gamma\subset\Aut(X)$ be a finite group.
Then the fixed point locus of $\Gamma$ is a closed submanifold.
\end{corollary}

\begin{proof}
Let $Y$ be the fixed point locus of $\Gamma$. It is obvious that $Y$ is a closed subset of $X$.

Choose a point $P\in Y$. We know from
Theorem~\ref{theorem:linearization} that there is a $\Gamma$-equivariant closed embedding
$U\hookrightarrow V$ for some $\Gamma$-invariant neighborhoods $U$ of $P$ in $X$ and $V$ of $0$
in~$T_{P, X}$. Under this embedding the neighborhood $U_Y=Y\cap U$ of $P$ in $Y$ is isomorphically mapped
onto the fixed point locus $F$ of $\Gamma$ in $V$. Since the action of
$\Gamma$ on $T_{P, X}$ is linear, we conclude that $F$ is
an intersection of $V$ with some linear subspace of $T_{P, X}$. In particular, we see that $F$ is smooth at~$0$,
which implies that $Y$ is smooth at $P$.
\end{proof}

Note that the fixed point locus discussed in Corollary~\ref{corollary:fixed-locus:smooth} may consist
of several connected components of different dimensions.

\begin{corollary}\label{corollary:fixed-locus}
Let $X$ be a complex manifold,
and $\Gamma\subset\Aut(X)$ be a finite group. Suppose
that $\Gamma$ has a fixed point $P$ on $X$ and let
$T\subset T_{P,X}$ be the maximal subspace on which the action of $\Gamma$ is trivial.
Then there exists a $\Gamma$-invariant submanifold $Y\subset X$ containing~$P$ such
that~\mbox{$T=T_{P,Y}$} and the action of~$\Gamma$ on $Y$ is trivial.
\end{corollary}

\begin{proof}
The fixed point locus $Y$ of $\Gamma$ is a closed submanifold
by Corollary~\ref{corollary:fixed-locus:smooth}.
Clearly, one has $T_{P,Y}\subset T$. On the other hand, by Theorem~\ref{theorem:linearization}
we have~\mbox{$\dim T_{P,Y}=\dim T$}.
\end{proof}

\section{Non-projective surfaces with $\chit(X)\neq 0$}
\label{section:class-VII}

In this section we will (mostly) work with non-projective compact complex surfaces
$X$ with $\chit(X)\neq 0$.
In this case, by the Enriques--Kodaira classification (see Theorem \ref{theorem:classification})
one has $\chit(X)> 0$.
The main purpose of this section is to prove the following result.

\begin{theorem}\label{theorem:class-VII}
Let $X$ be a non-projective compact complex surface with $\chit(X)\neq 0$.
Then the group $\Aut(X)$ is Jordan.
\end{theorem}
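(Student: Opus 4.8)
The plan is to go through the possibilities for $X$ provided by the Enriques--Kodaira classification. By Theorem~\ref{theorem:classification} the hypotheses force $\chit(X)>0$, and since rational and ruled surfaces are projective, $X$ is neither rational nor ruled; hence so is its minimal model $X_{\min}$, which is again non-projective. By Proposition~\ref{proposition:Bir-vs-Aut} one has $\Aut(X)\subset\Bir(X)\cong\Bir(X_{\min})=\Aut(X_{\min})$, so it suffices to treat minimal $X$. Inspecting Theorem~\ref{theorem:classification}, a minimal non-projective surface is a surface of class~VII, a K3 surface with $\ad\le 1$, a complex torus with $\ad\le 1$, a Kodaira surface, or a properly elliptic surface with $\ad=1$; complex tori and Kodaira surfaces have $\chit=0$, so here they occur only after blowing down, and their automorphism groups are Jordan by the results of \S\ref{section:Kodaira} and \S\ref{section:non-negative}. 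We are thus left with surfaces of class~VII with $\bb_2(X)>0$, non-projective K3 surfaces, and non-projective properly elliptic surfaces.

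If $X$ is a K3 surface, the global Torelli theorem shows that $\Aut(X)$ acts faithfully on $H^2(X,\ZZ)\cong\ZZ^{22}$, so $\Aut(X)$ embeds into $\GL_{22}(\QQ)$; by Minkowski's theorem (Theorem~\ref{theorem:Minkowski}) this group has bounded finite subgroups, hence so does $\Aut(X)$, which is therefore Jordan.

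If $X$ is properly elliptic with $\ad(X)=1$, then the elliptic fibration $\pi\colon X\to B$ is, up to equivalence, the map defined by $|\KKK_X^{\otimes m}|$ for $m\gg 0$, hence is $\Aut(X)$-invariant, and we obtain an exact sequence $1\to\Aut_\pi(X)\to\Aut(X)\to G_B$ with $G_B\subset\Aut(B)$. Because $\ad(X)=1$ the fibration is not isotrivial, so $\Aut_\pi(X)$ embeds into the automorphism group of the generic fibre of $\pi$ and is thus an extension of a finite group by a finitely generated abelian (Mordell--Weil type) group; in particular it has bounded finite subgroups. The group $G_B$ is finite if $g(B)\ge 2$, is virtually abelian if $g(B)=1$, and has the property that each of its finite subgroups contains a cyclic subgroup of bounded index if $B\cong\PP^1$; in every case $G_B$ is strongly Jordan, so Lemma~\ref{lemma:group-theory} shows that $\Aut(X)$ is (strongly) Jordan.

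The remaining and most delicate case is a minimal surface $X$ of class~VII with $\bb_2(X)>0$. If $X$ contains a curve, then, since such surfaces have $\ad(X)=0$ and hence carry only finitely many curves, all of them rational, the set of curves on $X$ is a finite non-empty $\Aut(X)$-invariant set containing no smooth elliptic curve, and Lemma~\ref{lemma:rational-curve} applies directly. If $X$ has no curves at all (conjecturally this never happens), I would argue via cohomology: the intersection form on $H^2(X,\ZZ)$ is negative definite modulo torsion, so, combining this with the actions on $H^1(X,\ZZ)$ and on the torsion of $H^2(X,\ZZ)$, the image of the natural homomorphism $\Aut(X)\to\GL\big(H^*(X,\ZZ)\big)$ is finite; by Lemma~\ref{lemma:group-theory}(i) it then remains to see that the kernel $\Aut_\tau(X)$ of homologically trivial automorphisms is Jordan, and this holds because $\Aut_\tau(X)$ is an extension of a finite group by $\Aut^0(X)$, which for class~VII surfaces with $\bb_2>0$ is a complex Lie group of dimension at most one and hence abelian. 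I expect this last point --- bounding the homologically trivial automorphisms of a curveless class~VII surface --- to be the main obstacle; it can also be bypassed, since the whole theorem follows from a general result of I.~Mundet i~Riera on finite group actions on compact manifolds with nonzero Euler characteristic.
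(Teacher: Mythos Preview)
Your approach diverges from the paper's and has several genuine gaps.

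The paper does not reduce to minimal models or run through the Enriques--Kodaira list. It argues directly in three cases: $\ad(X)=1$ (Lemma~\ref{lemma:a-1}), $\ad(X)=0$ with at least one curve (Lemma~\ref{lemma:at-least-one-curve}), and $X$ containing no curves (Lemma~\ref{lemma:no-curves}). The first case is immediate: since $\chit(X)\neq 0$, the algebraic reduction---an elliptic fibration---has a degenerate fibre, hence $X$ carries a curve that is not a smooth elliptic curve, and Lemma~\ref{lemma-elliptic-curves} finishes. The real content is in the remaining cases, handled by a chain of lemmas (\ref{lemma-b2-fixed-points}--\ref{lemma-G-decomposition}) based on the Lefschetz fixed-point formula: if every curve on $X$ is smooth elliptic, then each nontrivial finite-order element of $\overline{\Aut}(X)$ has exactly $\chit(X)$ isolated fixed points, and a combinatorial argument shows every finite subgroup of $\overline{\Aut}(X)$ contains a cyclic subgroup of index at most~$\chit(X)$.

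Your argument has the following specific problems. For minimal class~VII surfaces with $\bb_2>0$ containing a curve, the claim that ``all of them [are] rational'' is false: parabolic Inoue surfaces, for instance, contain a smooth elliptic curve, so Lemma~\ref{lemma:rational-curve} does not apply directly; this is exactly why the paper needs Lemma~\ref{lemma:at-least-one-curve}. For properly elliptic surfaces with $\ad(X)=1$, your strategy via Lemma~\ref{lemma:group-theory} requires either that $\Aut_\pi(X)$ have bounded finite subgroups or that $G_B$ have bounded finite subgroups, and you establish neither: ``strongly Jordan'' is weaker than ``bounded finite subgroups'', the implication ``$\ad(X)=1\Rightarrow$ not isotrivial'' is unjustified, and the Mordell--Weil heuristic does not obviously carry over to this non-algebraic setting. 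For the curveless class~VII case you correctly flag the gap yourself; the appeal to $\Aut^0(X)$ and to $\Aut_\tau(X)/\Aut^0(X)$ being finite is not substantiated, whereas the paper's Lefschetz argument (Lemma~\ref{lemma:no-curves}) treats this case uniformly and without structure theory. Invoking Mundet~i~Riera's theorem does bypass everything, as the paper itself notes, but then no independent proof has been given.
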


Recall that an \emph{algebraic reduction} of a compact complex surface
$X$ with $\ad(X)=1$ is the morphism $\pi\colon X\to B$ to a curve $B$ obtained as follows.
We start with a meromorphic map~\mbox{$X\dasharrow\PP^1$} defined by a non-constant
meromorphic function, regularize it by blow ups, and apply the Stein
factorization to the regularization. One can check that
the obtained morphism provides a holomorphic elliptic fibration $\pi$ on $X$. We refer
the reader to~\mbox{\cite[Proposition~VI.5.1]{BHPV-2004}} for details.

\begin{lemma}\label{lemma-elliptic-curves}
Let $X$ be a non-projective compact complex surface.
If $X$ contains an irreducible curve $C$ which is not a smooth elliptic curve,
then
the group $\Aut(X)$ is Jordan.
\end{lemma}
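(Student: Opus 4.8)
The plan is to reduce the statement to the case of a finite, non-empty, $\Aut(X)$-invariant collection of curves and then invoke Lemma~\ref{lemma:rational-curve}. Concretely, I would argue by cases on the algebraic dimension $\ad(X)\in\{0,1\}$ (the case $\ad(X)=2$ is excluded since $X$ is non-projective).

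First suppose $\ad(X)=0$. I claim that $X$ can contain only finitely many irreducible curves. Indeed, any two distinct irreducible curves $C_1,C_2$ on a surface with $\ad(X)=0$ cannot both have positive self-intersection, and a standard argument (using that the intersection form on the span of curves would otherwise contain a positive-dimensional space of effective classes producing meromorphic functions, contradicting $\ad(X)=0$; cf.\ \cite[\S IV]{BHPV-2004}) shows the set of irreducible curves is finite. In particular, the non-empty set $\mathcal S$ of all irreducible curves on $X$ that are not smooth elliptic curves --- which contains $C$ by hypothesis --- is finite. It is clearly $\Aut(X)$-invariant, since automorphisms permute irreducible curves and preserve the property of being (or not being) a smooth elliptic curve. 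Hence $\Aut(X)$ is Jordan by Lemma~\ref{lemma:rational-curve}.

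Now suppose $\ad(X)=1$, and let $\pi\colon X\to B$ be the algebraic reduction, which is an elliptic fibration over a curve $B$. The group $\Aut(X)$ permutes the fibers of $\pi$ (since $\pi$ is canonically associated to $X$), so it acts on $B$, giving a homomorphism $\Aut(X)\to\Aut(B)$. The general fiber of $\pi$ is a smooth elliptic curve, but the curve $C$ is not; therefore $C$ is contained in a fiber, or maps dominantly to $B$ --- in the latter case $C$ would be a multisection, but then $\pi|_C\colon C\to B$ is a finite cover, and I would instead look at the discriminant. Let me argue more cleanly: the set $\mathcal D\subset B$ of points over which the fiber of $\pi$ is not a smooth elliptic curve is finite and $\Aut(X)$-invariant (via the action on $B$), and $\pi(C)\in\mathcal D$ if $C$ lies in a fiber. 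If $\mathcal D\neq\emptyset$, then the preimage $\pi^{-1}(\mathcal D)$ contains a finite $\Aut(X)$-invariant set of irreducible curves none of which is a smooth elliptic curve (the reduced irreducible components of the singular/multiple fibers), and it is non-empty --- it contains $C$ when $C$ lies in a fiber --- so Lemma~\ref{lemma:rational-curve} applies. The remaining subcase is that $C$ is a multisection of $\pi$; then the finite set $\{$images of $C$ under $\Aut(X)\}$ need not work directly since $\Aut(X)$ need not preserve $C$, but one can enlarge: the union of all $\Aut(X)$-translates of $C$ is $\Aut(X)$-invariant, and I would show it is a finite union. This finiteness is the one genuinely delicate point.

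Thus the main obstacle I anticipate is precisely controlling the multisection subcase when $\ad(X)=1$: ruling out that $\Aut(X)$ moves $C$ through infinitely many distinct curves. The way I would handle this is to note that a multisection $C$ has $C\cdot F>0$ for the fiber class $F$, and $C^2$ is bounded in terms of the arithmetic genus of $C$ and the number of its intersection points with singular fibers via adjunction; more structurally, if $\Aut(X)$ had infinite orbit on curves, one could produce infinitely many effective divisors in a fixed cohomology class, and since the intersection form on a non-projective surface is negative semidefinite on the curve classes modulo fibers (the fibration being the algebraic reduction), this forces all these curves into a bounded family, which by rigidity of curves in a surface is finite --- contradiction. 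Once finiteness of the orbit is established, $\mathcal S=\Aut(X)\cdot C$ is a finite non-empty $\Aut(X)$-invariant set of curves, none a smooth elliptic curve, and Lemma~\ref{lemma:rational-curve} finishes the proof.
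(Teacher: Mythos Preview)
Your overall strategy---reduce to a finite non-empty $\Aut(X)$-invariant set of non-elliptic curves and invoke Lemma~\ref{lemma:rational-curve}---is exactly the paper's approach. The $\ad(X)=0$ case is fine, though the paper simply cites \cite[Theorem~IV.8.2]{BHPV-2004} for the finiteness of all curves rather than improvising an argument.

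The real issue is your treatment of $\ad(X)=1$. You single out the multisection subcase as ``the one genuinely delicate point'' and then give a hand-wavy argument about infinite orbits, fixed cohomology classes, and negative semidefiniteness that is not a proof as written. In fact this subcase is \emph{vacuous}: on a non-projective surface with algebraic reduction $\pi\colon X\to B$, every curve lies in a fiber. If $C$ were a multisection then $C\cdot F>0$ for the fiber class $F$, and since $F^2=0$ one gets $(C+nF)^2=C^2+2n\,C\cdot F>0$ for $n\gg 0$, so $X$ would be projective by Lemma~\ref{lemma:projective}. The paper dispatches this in a single clause (``all curves on $X$ are contained in the fibers of the algebraic reduction by Lemma~\ref{lemma:projective}''), and once you know it the rest is immediate: the set of irreducible curves that are not smooth elliptic is finite (each lies in one of finitely many degenerate fibers), non-empty (it contains $C$), and $\Aut(X)$-invariant.

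A minor slip: your claim that the components of $\pi^{-1}(\mathcal D)$ are ``none of which is a smooth elliptic curve'' is false for multiple fibers whose reduction is a smooth elliptic curve. The clean invariant set is simply the set of all irreducible curves on $X$ that are not smooth elliptic curves, which is what the paper uses.
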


\begin{proof}
We claim that
the surface $X$ contains at most a finite number of such curves.
Indeed, if $\ad(X)=0$, then $X$ contains at most a
finite number of curves at all, see~\mbox{\cite[Theorem~IV.8.2]{BHPV-2004}}.
If $\ad(X)=1$, then all curves on $X$ are contained
in the fibers of the algebraic reduction by Lemma~\ref{lemma:projective}.
The latter fibration is elliptic,
so every non-elliptic curve is contained in one of its degenerate
fibers. Now the assertion follows from Lemma~\ref{lemma:rational-curve}.
\end{proof}

\begin{lemma}\label{lemma:a-1}
Let $X$ be a compact complex surface
with $\chit(X)\neq 0$.
If $\ad(X)=1$, then the group $\Aut(X)$ is Jordan.
\end{lemma}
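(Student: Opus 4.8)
The plan is to reduce the statement to the situation already handled by Lemma~\ref{lemma-elliptic-curves}, by exhibiting an irreducible curve on $X$ that is not a smooth elliptic curve. First I would observe that a surface with $\chit(X)\neq 0$ cannot be projective with trivial-ish invariants in a way that is already covered: indeed, the classification (Theorem~\ref{theorem:classification}) shows that among minimal surfaces with $\ad(X)=1$, only surfaces of class~VII, primary or secondary Kodaira surfaces, and properly elliptic surfaces occur, and of these only class~VII and properly elliptic surfaces can have $\chit(X)\neq 0$ (Kodaira surfaces have $\chit=0$). So $X$ is, up to passing to a minimal model, of one of these two types. Note that passing to a minimal model only introduces $(-1)$-curves, which are smooth rational, hence not smooth elliptic; and $\Aut(X)$ embeds into $\Aut$ of the minimal model up to finite index only in the non-rational, non-ruled case by Proposition~\ref{proposition:Bir-vs-Aut} — so I should be a little careful and argue directly on $X$ rather than on its minimal model.

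The cleaner route: since $\ad(X)=1$, let $\pi\colon X\to B$ be the algebraic reduction, which is an elliptic fibration (cf.\ \cite[Proposition~VI.5.1]{BHPV-2004}). If every irreducible curve on $X$ were a smooth elliptic curve, then in particular every fiber of $\pi$, and every component of every fiber, would be a smooth elliptic curve. But an elliptic fibration over a curve $B$ with $\chit(X)\neq 0$ must have singular fibers: by the additivity of the topological Euler characteristic for elliptic fibrations, $\chit(X)=\sum_{b}\chit\big(\pi^{-1}(b)\big)$ where the sum runs over the points with singular fibers and each such term is positive (a smooth elliptic fiber contributes $0$). Hence $\chit(X)\neq 0$ forces at least one singular fiber, and a singular fiber of an elliptic fibration is never a smooth elliptic curve: it is either a non-reduced or reducible configuration, or a reduced irreducible rational curve with a node or cusp. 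In either case $X$ contains an irreducible curve that is not a smooth elliptic curve — either a component of that singular fiber (which is then a rational curve, possibly singular), or, if the singular fiber is irreducible, the singular rational curve itself.

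With such a curve in hand, Lemma~\ref{lemma-elliptic-curves} applies directly (note $X$ is non-projective, as $\ad(X)=1<2$), and we conclude that $\Aut(X)$ is Jordan. The main obstacle I anticipate is justifying the Euler-characteristic computation for the (possibly non-Kähler, non-algebraic) elliptic fibration $\pi$: one needs that $\chit$ is additive over the fibration and that singular fibers contribute positively even in the analytic, non-projective setting. This is standard for elliptic surfaces — it follows from Kodaira's classification of singular fibers of elliptic fibrations, which applies to compact complex elliptic surfaces regardless of algebraicity, together with the fact that the only fibers with $\chit=0$ are the smooth elliptic ones and the multiple fibers with smooth elliptic reduction (the latter still not being smooth elliptic curves when counted with their reduced structure, but in any case reducible/non-reduced, hence again not smooth elliptic curves). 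So once the additivity is invoked the rest is immediate; the only care needed is to phrase the dichotomy "reducible/non-reduced, or irreducible singular rational" correctly so that in every case we genuinely produce an irreducible curve of the required kind.
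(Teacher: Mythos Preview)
Your approach is the same as the paper's: take the algebraic reduction $\pi\colon X\to B$, which is an elliptic fibration, use $\chit(X)\neq 0$ to force a fiber whose reduction is not a smooth elliptic curve, and then invoke Lemma~\ref{lemma-elliptic-curves}. One small correction to your parenthetical: a multiple fiber with smooth elliptic reduction \emph{does} have a smooth elliptic curve as its reduced structure, so it would not furnish the curve you need; this is harmless, however, since such fibers contribute $0$ to $\chit$, and hence $\chit(X)\neq 0$ already guarantees a fiber of a genuinely degenerate Kodaira type, all of whose irreducible components are rational.
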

\begin{proof}
Let $\pi\colon X\to B$ the algebraic reduction, so that $B$ is a
smooth curve and $\pi$ is an elliptic fibration.
Since
$\chit(X)\neq 0$,
the fibration $\pi$ has
at least one fiber $X_b$ such that $F=(X_b)_{\red}$
is not a smooth elliptic curve. So
the group $\Aut(X)$ is Jordan by Lemma~\ref{lemma-elliptic-curves}.
\end{proof}

For every compact complex surface $X$, we denote
by $\overline{\Aut}(X)$ the subgroup of $\Aut(X)$ that consists of all elements acting trivially
on $H^*(X,\QQ)$. This is a normal subgroup of $\Aut(X)$, and the quotient group
$\Aut(X)/\overline{\Aut}(X)$ has bounded finite subgroups
by Theorem~\ref{theorem:Minkowski}. Thus Lemma~\ref{lemma:group-theory}(i)
implies
that the group $\Aut(X)$ is Jordan if and only if~\mbox{$\overline{\Aut}(X)$} is
Jordan.

\begin{lemma}\label{lemma-b2-fixed-points}
Let $X$ be a compact complex surface. Suppose that every irreducible curve
contained in $X$
is a smooth elliptic curve. Let $g\in \overline{\Aut}(X)$ be a non-trivial element of
finite
order, and $\Xi_0(g)$ be the set of all isolated fixed points of $g$. Then
$$
|\Xi_0(g)|=\chit(X).
$$
\end{lemma}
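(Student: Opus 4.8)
The plan is to compare two evaluations of the topological Lefschetz number
\[
L(g)=\sum_{i}(-1)^i\operatorname{tr}\bigl(g^*\mid H^i(X,\QQ)\bigr).
\]
Since $g\in\overline{\Aut}(X)$ acts trivially on $H^*(X,\ZZ)$, and therefore on $H^*(X,\QQ)$, one immediately gets $L(g)=\sum_i(-1)^i\bb_i(X)=\chit(X)$. The substance of the proof is then to compute $L(g)$ from the fixed locus of $g$ and to show that it equals $|\Xi_0(g)|$.

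To that end I would first note that the cyclic group $\langle g\rangle$ is finite and acts holomorphically on $X$, so by H.\,Cartan's lemma on fixed loci of finite group actions the set $\Fix(g)$ is a closed complex submanifold of $X$. Being closed in the compact surface $X$, it has finitely many connected components, each of which is an isolated point or a smooth compact complex curve; in particular $\Xi_0(g)$, the set of $0$-dimensional components, is finite. (Here we assume $g$ to be nontrivial, so that $\Fix(g)$ has no $2$-dimensional component.) Since by hypothesis every irreducible curve in $X$ is a smooth elliptic curve, each $1$-dimensional component of $\Fix(g)$ is a smooth elliptic curve, hence has Euler characteristic $0$.

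The heart of the argument is the submanifold form of the Lefschetz--Hopf fixed point formula (the \emph{clean} case): if the fixed locus of a smooth self-map of a compact manifold is a disjoint union of closed submanifolds along whose normal bundles the differential has no eigenvalue $1$, then $L(g)$ is the sum over these submanifolds of their Euler characteristics, each weighted by the sign of $\det(I-dg)$ in the normal directions. For our $g$, which has finite order, the eigenvalues of $dg$ on the normal bundle of each fixed component are roots of unity different from $1$, so the non-degeneracy hypothesis holds; and because $g$ is holomorphic, $dg$ acts $\CC$-linearly on each normal space, whence $\det_{\RR}(I-dg)=\bigl|\det_{\CC}(I-dg)\bigr|^2>0$ and every weight equals $+1$. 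Hence
\[
L(g)=\sum_{F}\chi(F)=|\Xi_0(g)|+\sum_{\dim F=1}\chi(F)=|\Xi_0(g)|,
\]
the last sum vanishing because each such $F$ is an elliptic curve. Comparing with $L(g)=\chit(X)$ gives the claim.

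I expect the only point requiring genuine care to be the assembly and justification of the submanifold version of Lefschetz--Hopf together with the positivity of the local signs; both are standard for finite-order holomorphic maps and should either be quoted from the literature on transformation groups or be deduced from the slice theorem and the ordinary Lefschetz--Hopf theorem. The smoothness of $\Fix(g)$ via Cartan's lemma, and the identification of its positive-dimensional components with elliptic curves, are immediate from the hypotheses and pose no difficulty.
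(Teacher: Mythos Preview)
Your proof is correct and follows essentially the same route as the paper: compute the Lefschetz number $L(g)=\chit(X)$ from the trivial action on cohomology, and then identify it with the sum of Euler characteristics of the fixed components, each $1$-dimensional component contributing $0$ because it is a smooth elliptic curve. The only cosmetic differences are that the paper invokes its own Proposition~\ref{proposition:fixed-point} (faithfulness of the tangent representation) rather than Cartan's lemma to see that the $1$-dimensional components of $\Fix(g)$ are pairwise disjoint smooth curves, and it states the fixed-point formula in the packaged form $\chit(\Fix(g))=\chit(X)$ without spelling out the local sign computation you give.
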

\begin{proof}
The fixed locus $\Xi(g)$ of~$g$ is a disjoint union $\Xi_0(g)\sqcup\Xi_1(g)$,
where
$\Xi_1(g)$ is of pure dimension $1$.
Note that
the curve $\Xi_1(g)$ is smooth by Corollary~\ref{corollary:fixed-locus:smooth}, so that
every irreducible component of $\Xi_1(g)$ is its connected
component.

We see that every connected component of $\Xi_1(g)$ is a smooth elliptic
curve,
so that~\mbox{$\chit(\Xi_1(g))=0$}. On the other hand, one has
$$
\chit(\Xi(g))=\chit(X)
$$
by the topological Lefschetz fixed point formula,
see \cite[Proposition~5.3.11]{Dieck1979}.
Therefore, we have
$$
\chit(X)=\chit(\Xi(g))=\chit(\Xi_0(g))+\chit(\Xi_1(g))=\chit(\Xi_0(g))=|\Xi_0(g)|. \qedhere
$$
\end{proof}

\begin{lemma}\label{lemma-cyclic-subgroups}
Let $X$ be a compact complex surface
with $\chit(X)\neq 0$. Suppose that every irreducible curve contained in
$X$
is a smooth elliptic curve.
Let $G\subset \overline{\Aut}(X)$ be a finite subgroup.
If $G$ contains a non-trivial normal cyclic subgroup,
then $G$ contains an abelian subgroup of index at most $12\chit(X)$.
\end{lemma}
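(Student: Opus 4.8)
The plan is to let $G$ act on the finite set of isolated fixed points of a generator of the given cyclic normal subgroup, pass to the stabilizer of one such point, and linearize there.

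First I would fix a generator $g$ of the nontrivial cyclic normal subgroup $C\subset G$, of order $n\ge 2$. Since $C$ is normal in $G$, for each $h\in G$ we have $hgh^{-1}=g^k$ with $\gcd(k,n)=1$, and then $\Fix(g^k)=\Fix(g)$ (because $g$ is itself a power of $g^k$); hence $h$ carries the fixed locus $\Xi(g)$ of $g$ to itself, and being a homeomorphism it preserves the decomposition $\Xi(g)=\Xi_0(g)\sqcup\Xi_1(g)$ into the isolated points and the one-dimensional part. Thus $G$ acts on the finite set $\Xi_0(g)$, which by Lemma~\ref{lemma-b2-fixed-points} has exactly $\chit(X)$ elements; in particular $\chit(X)>0$ and $\Xi_0(g)\neq\emptyset$. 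Choosing $P\in\Xi_0(g)$, its stabilizer $G_P\subset G$ has index $|G\cdot P|\le\chit(X)$.

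Next, $G_P$ is a finite group fixing the point $P$ of the irreducible Hausdorff reduced complex space $X$, so Proposition~\ref{proposition:fixed-point} supplies a faithful representation $G_P\hookrightarrow\GL(T_{P,X})\cong\GL_2(\CC)$. It therefore suffices to show that every finite subgroup $\bar G\subset\GL_2(\CC)$ contains an abelian subgroup of index at most $12$. For this, let $Z=\bar G\cap\CC^*$ be the subgroup of scalar matrices in $\bar G$; it is central in $\bar G$, and $\bar G/Z$ is isomorphic to a finite subgroup of $\PGL_2(\CC)$. By the classification of finite subgroups of $\PGL_2(\CC)$ (cf.\ the proof of Lemma~\ref{lemma:GL-quotient-Z}), the group $\bar G/Z$ contains a cyclic subgroup of index at most $12$, the extremal case being the subgroup of order $5$ in the icosahedral group. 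Taking the preimage $A\subset\bar G$ of such a cyclic subgroup gives $[\bar G:A]\le 12$, and $A$ is abelian since it is a central extension of a cyclic group, exactly as in the proof of Lemma~\ref{lemma:central-subgroup}.

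Pulling $A$ back to a subgroup of $G_P$ and multiplying the two index bounds yields an abelian subgroup of $G$ of index at most $12\,\chit(X)$, which is the assertion. The only step that needs a little care is the reduction to a single fixed point: one must verify that $G$ genuinely permutes the set $\Xi_0(g)$ itself rather than the possibly larger fixed locus of a proper power of $g$, and this is precisely what the identity $\Fix(g^k)=\Fix(g)$ for $\gcd(k,n)=1$ guarantees; everything else is a routine combination of Lemma~\ref{lemma-b2-fixed-points}, Proposition~\ref{proposition:fixed-point}, and the classification of finite subgroups of $\PGL_2(\CC)$.
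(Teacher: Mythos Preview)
Your proof is correct and follows essentially the same approach as the paper's: both use Lemma~\ref{lemma-b2-fixed-points} to count the isolated fixed points of (a generator of) the cyclic normal subgroup, use normality to see that $G$ permutes these $\chit(X)$ points, pass to the stabilizer of one of them, and then invoke Proposition~\ref{proposition:fixed-point} together with the classification of finite subgroups of $\GL_2(\CC)$/$\PGL_2(\CC)$. Your version simply makes explicit the steps the paper leaves implicit---in particular the identity $\Fix(g^k)=\Fix(g)$ for $\gcd(k,n)=1$ and the bound $12$ via the $\PGL_2(\CC)$ classification, where the paper just cites \cite[Corollary~2.2.2]{Prokhorov-Shramov-JCr3}.
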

\begin{proof}
Let $N\subset G$ be a non-trivial normal cyclic subgroup.
By Lemma \ref{lemma-b2-fixed-points}
the group $N$ has exactly
$\chit(X)>0$
isolated fixed points on~$X$
(and maybe also several curves that consist of fixed points).
Since $N$ is normal in $G$, the group $G$ permutes these points.
Thus there exists a subgroup of
index at most $\chit(X)$ in $G$ acting on $X$ with a fixed point.
Now the assertion follows from Corollary~\ref{corollary:fixed-point}
and Theorem~\ref{theorem:PGL2} (cf.~\mbox{\cite[Corollary~2.2.2]{Prokhorov-Shramov-JCr3}}).
\end{proof}

\begin{lemma}\label{lemma:at-least-one-curve}
Let $X$ be a compact complex surface
with $\ad(X)=0$ and $\chit(X)\neq 0$.
If $X$ contains at least one curve, then $\Aut(X)$ is Jordan.
\end{lemma}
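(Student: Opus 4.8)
The plan is to reduce, using the results already established, to the case in which every irreducible curve on $X$ is a smooth elliptic curve, and then to study the action on one such curve, playing off the near‑abelianness of the automorphism group of an elliptic curve against the fixed‑point information coming from $\chit(X)\neq 0$.

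First I would observe that $X$ is not projective, since $\ad(X)=0\neq 2$. If $X$ contains an irreducible curve which is not a smooth elliptic curve, then $\Aut(X)$ is Jordan by Lemma~\ref{lemma-elliptic-curves} and we are done; so assume from now on that every irreducible curve on $X$ is a smooth elliptic curve. Since $\ad(X)=0$, the surface $X$ contains only finitely many curves by~\cite[Theorem~IV.8.2]{BHPV-2004}, say $C_1,\dots,C_k$, and $\Aut(X)$ permutes this finite set. Recall that $\Aut(X)$ is Jordan if and only if $\overline{\Aut}(X)$ is, and that a group is Jordan if and only if there is a uniform bound on the index of some abelian (not necessarily normal) subgroup in each of its finite subgroups. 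Hence it suffices to bound, by a constant depending only on $X$, the index of an abelian subgroup in an arbitrary finite subgroup $G\subset\overline{\Aut}(X)$.

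Fix $C=C_1$. Replacing $G$ by the stabilizer of $C$ costs index at most $k$, so I may assume that $G$ preserves $C$; restriction then gives a homomorphism $\rho\colon G\to\Aut(C)$. Since the translation subgroup $C\subset\Aut(C)$ has index at most $6$, I pass to $G_1=\rho^{-1}(\rho(G)\cap C)$, so that $[G:G_1]\le 6$ and $\rho(G_1)$ is abelian. Let $K_1=\ker(\rho|_{G_1})$ be the subgroup of elements of $G_1$ that fix $C$ pointwise. The key point is that $K_1$ is a cyclic central subgroup of $G_1$. Indeed, any $g\in K_1$ acts on the normal bundle $N_{C/X}$ by a line‑bundle automorphism over $\id_C$, hence — as $C$ is compact and connected — by multiplication by a scalar $\zeta(g)\in\CC^*$; if $\zeta(g)=1$, then $g$ acts trivially on $T_{P,C}$ and on $N_{C/X,P}$ for $P\in C$, and, having finite order, therefore acts trivially on all of $T_{P,X}$, so that $g=\id$ by Proposition~\ref{proposition:fixed-point}. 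Thus $g\mapsto\zeta(g)$ embeds $K_1$ into $\CC^*$ and $K_1$ is cyclic; and since conjugation inside $G_1$ does not change the scalar $\zeta$, the subgroup $K_1$ is central in $G_1$.

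Now I split into two cases. If $K_1$ is trivial, then $\rho|_{G_1}$ is injective and $G_1\cong\rho(G_1)$ is abelian, so $G$ contains an abelian subgroup of index at most $6k$. If $K_1$ is non‑trivial, then $G_1$ is a finite subgroup of $\overline{\Aut}(X)$ possessing a non‑trivial cyclic normal subgroup, so by Lemma~\ref{lemma-cyclic-subgroups} it contains an abelian subgroup of index at most $12\chit(X)$, whence $G$ contains an abelian subgroup of index at most $72k\chit(X)$. In both cases $G$ has an abelian subgroup of index at most $72k\chit(X)$, a constant depending only on $X$; therefore $\overline{\Aut}(X)$, and hence $\Aut(X)$, is Jordan. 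I expect the only delicate point to be the normal‑bundle argument of the previous paragraph — in particular, verifying that a non‑trivial finite‑order automorphism fixing $C$ pointwise really does act non‑trivially on $N_{C/X}$ — the rest being routine bookkeeping with indices on top of Lemmas~\ref{lemma-elliptic-curves} and~\ref{lemma-cyclic-subgroups}.
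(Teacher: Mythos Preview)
Your proof is correct and follows essentially the same route as the paper: reduce to the case where all curves are smooth elliptic, pass to the stabilizer of one such curve $C$, and split according to whether the kernel of the restriction $G\to\Aut(C)$ is trivial (so $G$ embeds in $\Aut(C)$) or non-trivial cyclic (so Lemma~\ref{lemma-cyclic-subgroups} applies). The paper simply asserts that this kernel is a finite cyclic group, whereas you supply the normal-bundle argument making this explicit; your intermediate passage to $G_1$ (forcing the image in $\Aut(C)$ to lie in the translation subgroup) is harmless but unnecessary, since Lemma~\ref{lemma-cyclic-subgroups} only needs a non-trivial cyclic \emph{normal} subgroup, and the kernel of $\rho$ is already normal in the full stabilizer.
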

\begin{proof}
It is enough to prove that the group
$\overline{\Aut}(X)$ is Jordan. The surface $X$ contains at most a finite number
of curves by~\cite[Theorem~IV.8.2]{BHPV-2004}.
By Lemma \ref{lemma-elliptic-curves} we may assume that all these curves
are smooth and elliptic.
Let $C_1,\dots, C_m$ be all curves on $X$, and let
$\Aut^\sharp(X)\subset \overline{\Aut}(X)$ be the stabilizer of~$C_1$.
Clearly, the subgroup $\Aut^\sharp(X)$ has index at most $m$ in
$\overline{\Aut}(X)$,
so it is sufficient to prove that $\Aut^\sharp(X)$ is Jordan.
For any finite subgroup $G\subset \Aut^\sharp(X)$ we have an exact sequence
\[
1\longrightarrow\Gamma \longrightarrow G\longrightarrow\Aut(C_1),
\]
where $\Gamma$ is the kernel of the action of $G$ on $C_1$.

Let $P$ be a point on $C_1$.
Then $\Gamma\subset\GL(T_{P,X})$ by Corollary~\ref{corollary:fixed-point},
and $\Gamma$ has a trivial one-dimensional subrepresentation
in $T_{P,X}$ corresponding to the tangent space~$T_{P,C_1}$. This implies that
$\Gamma$
is a cyclic group. If $\Gamma=\{1\}$, then
$G$ is contained in $\Aut(C_1)$. Since $C_1$ is an elliptic curve,
the group $G$ has an abelian subgroup of index at most $6$.
If $\Gamma\neq \{1\}$, then
$G$ has an abelian subgroup of index at most $12\chit(X)$
by Lemma \ref{lemma-cyclic-subgroups}.
Therefore, in both cases $G$ also has a normal abelian subgroup of bounded
index.
\end{proof}

In the following lemmas we will deal with compact complex surfaces $X$
that contain no curves. In particular, this implies that $\ad(X)=0$. Furthermore,
we conclude that the action of any finite subgroup $G$ of $\Aut(X)$
is free in codimension one, that is, there exists a finite
subset $\Xi\subset X$ such that the action of $G$ on $X\setminus\Xi$ is free.

\begin{lemma}\label{lemma:no-even-order}
Let $X$ be a compact complex surface
with $\chit(X)\neq 0$. Suppose that $X$ contains no curves.
Then the group
$\overline{\Aut}(X)$ has no elements of even order.
\end{lemma}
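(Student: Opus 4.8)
The plan is to derive a contradiction from the existence of an element $g\in\overline{\Aut}(X)$ of order exactly $2$ (it suffices to rule out involutions, since if $g$ has even order $2k$ then $g^k$ is an involution in $\overline{\Aut}(X)$). By Lemma~\ref{lemma-b2-fixed-points} the involution $g$ has exactly $\chit(X)$ isolated fixed points on $X$, together possibly with some fixed curves — but by hypothesis $X$ contains no curves at all, so the fixed locus $\Xi(g)$ consists of precisely $\chit(X)\ge 1$ isolated points. In particular $g$ is nontrivial and has a fixed point $P$.

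Next I would analyze the local action of $g$ near one of its isolated fixed points $P$. By Proposition~\ref{proposition:fixed-point} the representation of $\langle g\rangle\cong\ZZ/2\ZZ$ on $T_{P,X}\cong\CC^2$ is faithful; since it must have $P$ as an \emph{isolated} fixed point, the linearized action has no nonzero invariant vector, hence both eigenvalues of $g$ on $T_{P,X}$ equal $-1$, i.e. $g$ acts near $P$ as $-\id$ (after a holomorphic change of coordinates, using that a finite-order holomorphic map is linearizable at a fixed point). Consequently the differential $dg_P$ has determinant $(-1)^2=1$. The key point is that this holds at \emph{every} isolated fixed point: at each of the $\chit(X)$ fixed points, $g$ acts locally as $-\id$ with determinant $+1$ on the tangent space.

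Now I would invoke the holomorphic Lefschetz fixed point formula (Atiyah--Bott / Woods Hole), which computes $\sum_{i}(-1)^i\operatorname{tr}\big(g^* \mid H^i(X,\OOO_X)\big)$ as a sum over the isolated fixed points $P$ of the local contributions $\frac{1}{\det(\id - dg_P^{-1})}$. Since $dg_P=-\id$, each local term equals $\frac{1}{\det(2\id)}=\frac14$, so the holomorphic Lefschetz number equals $\frac{\chit(X)}{4}$. On the other hand, since $g\in\overline{\Aut}(X)$ acts trivially on $H^*(X,\ZZ)$, it acts trivially on $H^i(X,\OOO_X)$ (for instance via the Hodge/Fr\"olicher relation to the constant-coefficient cohomology, or directly because $g$ being in the identity component of the relevant action is too strong — rather: $g^*$ acts trivially on $H^1(X,\CC)\supset H^1(X,\OOO_X)$ and on $H^0$, and $H^2(X,\OOO_X)$ is controlled likewise). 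Hence the holomorphic Lefschetz number is $\sum_i(-1)^i h^i(X,\OOO_X)$, which is an \emph{integer}. This contradicts $\frac{\chit(X)}{4}$ unless $4\mid\chit(X)$, so one more argument is needed to exclude the remaining case.

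To close the gap I would combine this with the topological Lefschetz formula already used in Lemma~\ref{lemma-b2-fixed-points}: it gives $\chit(X)=|\Xi(g)|$ with local index $+1$ at each of the $\chit(X)$ isolated fixed points of $g$, consistent but not contradictory. The cleaner route — and the step I expect to be the main obstacle — is to sharpen the holomorphic Lefschetz computation: rather than using $g$ itself, one applies it to show $\overline{\Aut}(X)$ contains no element of order $2$ by instead ruling out order-$2^k$ elements via a finer invariant, or to observe that the arithmetic genus $\chi(\OOO_X)=\sum_i(-1)^i h^i(X,\OOO_X)$ of a surface of class VII vanishes, so the holomorphic Lefschetz number is $0$, forcing $\chit(X)=0$, a contradiction with the hypothesis $\chit(X)\neq 0$. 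Thus the real work is verifying that $g^*=\id$ on $H^*(X,\OOO_X)$ (immediate from $g\in\overline{\Aut}(X)$ and the Hodge decomposition of $H^1$, together with Serre duality and $h^2(\OOO_X)=h^0(\KKK_X)$ being $g$-invariant) and pinning down $\chi(\OOO_X)=0$ from the Enriques--Kodaira classification (Theorem~\ref{theorem:classification}: these non-projective surfaces with $\chit(X)>0$ and no curves are of class VII, where $\bb_1(X)=1$, $\bb_2(X)\ge 0$, and $\chi(\OOO_X)=\frac{1}{12}(\chit(X)+\KKK_X^2)=0$ since $\chit(X)=\bb_2(X)$ and $\KKK_X^2=-\bb_2(X)$ for minimal class VII surfaces).
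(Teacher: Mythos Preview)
Your approach is essentially the same as the paper's: compute the holomorphic Lefschetz number of the involution $g$ both globally (as $\chi(\OOO_X)$, using that $g$ acts trivially on the Hodge pieces $H^q(X,\OOO_X)$ since it acts trivially on $H^*(X,\ZZ)$) and locally (each isolated fixed point contributes $\tfrac14$), and then compare with the topological count of $\chit(X)$ fixed points from Lemma~\ref{lemma-b2-fixed-points}.

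There is, however, a genuine gap at the very end. You assert that a non-projective surface with $\chit(X)>0$ and no curves must be of class~VII, and then use $\chi(\OOO_X)=0$. This is not true: a non-projective K3 surface with $\ad(X)=0$ can very well have no curves, and for it $\chit(X)=24$ while $\chi(\OOO_X)=2$. (These, together with class~VII, are the only possibilities once you note that ``no curves'' forces $\ad(X)=0$ and minimality, and then read off Theorem~\ref{theorem:classification}.) The paper treats the two cases separately: in the class~VII case it computes $\chi(\OOO_X)=0$ directly from the Hodge numbers $\hh^{0,1}=1$, $\hh^{1,0}=\hh^{0,2}=0$, so the holomorphic Lefschetz number vanishes and $g$ has no fixed points, contradicting $\chit(X)>0$; in the K3 case $\chi(\OOO_X)=2$, so the holomorphic Lefschetz formula gives $8$ fixed points, contradicting $\chit(X)=24$.

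A smaller point: your computation of $\chi(\OOO_X)$ for class~VII via Noether and $\KKK_X^2=-\bb_2(X)$ is circular, since that identity is itself usually derived from $\chi(\OOO_X)=0$. Use the Hodge numbers directly, as the paper does.
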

\begin{proof}
Let $g\in \overline{\Aut}(X)$ be an element of order $2$
(such elements always exist provided that there are elements
of even order).

First assume that $\varkappa(X)=-\infty$.
We have $\bb_1(X)=1$ and $\bb_2(X)=\chit(X)>0$ (see Theorem~\ref{theorem:classification}).
Moreover, we know that $\hh^{2,0}(X)=0$ because $\varkappa(X)=-\infty$.
Hodge relations (see e.g. \cite[\S\,IV.2]{BHPV-2004})
give us
\[
\hh^{0,1}(X)=1,\quad \hh^{1,0}(X)=0, \quad \text{and}\quad \hh^{2,0}(X)=\hh^{0,2}(X)=0.
\]
Thus, one has $\chi(\OOO_X)=0$, and the canonical embedding $H^1(X,\OOO_X)\hookrightarrow H^1(X,\CC)$ is
an isomorphism.
In particular, the element $g$ acts trivially on $H^1(X,\OOO_X)$.
We also know that there are no curves consisting of $g$-fixed points. Therefore,
the holomorphic Lefschetz fixed point formula (see e.~g.~\mbox{\cite[\S3.4]{Griffiths-Harris-1978}})
can be written as follows:
\begin{equation*}
\sum\limits_{P\in\Fix(g)}\frac{1}{\det\left(\operatorname{Id}-g_P\right)}= \sum\limits_{q=0}^2 (-1)^q
\operatorname{tr} g^* |_{H^q(X,\OOO_X)}=\bb_0(X)-\hh^{0,1}(X)+\hh^{0,2}(X)=0,
\end{equation*}
where $\Fix(g)$ is the fixed point locus of $g$, and $g_P\colon T_{P,X}\to T_{P,X}$ is the differential
of $g$ at a fixed point $P$. Since the order of $g$ equals $2$, one has $g_p=-\operatorname{Id}$,
because otherwise there exists an analytic germ of a curve in a neighborhood of $P$ in $X$ that consists
of fixed points of~$g$ by Corollary~\ref{corollary:fixed-locus}.
Hence
$$
\frac{|\Fix(g)|}{4}=\sum\limits_{P\in\Fix(g)}\frac{1}{\det\left(\mathrm{Id}-g_P\right)}=0.
$$
Thus, we conclude that $g$ has no fixed points at all. The latter contradicts Lemma~\ref{lemma-b2-fixed-points}.

Now assume that $\varkappa(X)\ge 0$.
Since $\ad(X)=0$, this implies that $\varkappa(X)=0$ and $X$ is a K3 surface
(see Theorem~\ref{theorem:classification}).
Therefore, one has $\chit(X)=24$ and $\chi(\OOO_X)=2$.
As above the holomorphic Lefschetz fixed point formula
shows that $g$ has exactly $8$ fixed points.
This again contradicts Lemma~\ref{lemma-b2-fixed-points}.
\end{proof}

\begin{lemma}\label{lemma-G-fixed-point}
Let $X$ be a compact complex surface
with $\chit(X)\neq 0$. Suppose that $X$ contains no curves.
Let $G\subset\overline{\Aut}(X)$ be a finite subgroup. Suppose that
$G$ has a fixed point on $X$. Then $G$ is cyclic.
\end{lemma}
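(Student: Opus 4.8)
The plan is to exploit the fixed point $P$ in order to linearize $G$. First I would apply Proposition~\ref{proposition:fixed-point}: a connected compact complex surface is an irreducible Hausdorff reduced complex space, so since $G$ fixes $P$ the natural representation $G\to\GL(T_{P,X})\cong\GL_2(\CC)$ is faithful. It therefore suffices to prove that this subgroup of $\GL_2(\CC)$ is cyclic, and for this I would combine the two pieces of information supplied by the hypotheses.

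The first input is that $\overline{\Aut}(X)$ has no elements of even order by Lemma~\ref{lemma:no-even-order}, so every non-trivial element of $G$ has odd order, and hence $|G|$ is odd by Cauchy's theorem. I claim that any finite subgroup of $\GL_2(\CC)$ of odd order is abelian. Indeed, its image $\bar G$ in $\PGL_2(\CC)$ is a finite subgroup of odd order, hence cyclic by the classification of finite subgroups of $\PGL_2(\CC)$. The kernel of $G\to\bar G$ is the subgroup $Z\subset G$ consisting of scalar matrices, so $G/Z\cong\bar G$ is cyclic; moreover $Z$ is contained in the centre $Z(G)$, so $G/Z(G)$ is cyclic as a quotient of $G/Z$, and a group with cyclic central quotient is abelian.

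The second input is that $X$ contains no curves. For every $g\in G\setminus\{1\}$ the fixed locus $\Fix(g)$ is an analytic subset of $X$ whose positive-dimensional part would be a union of curves (this is the set $\Xi_1(g)$ occurring in Lemma~\ref{lemma-b2-fixed-points}); since $X$ contains no curves, this part is empty, so $\Fix(g)$ is finite and in particular $P$ is an isolated fixed point of $g$. A finite-order automorphism is linearizable in suitable local coordinates centred at $P$, so its local fixed locus is the $1$-eigenspace of $g$ acting on $T_{P,X}$; isolatedness of $P$ forces this eigenspace to be zero, i.e. $1$ is not an eigenvalue of $g$ on $T_{P,X}$. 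Now I would simultaneously diagonalize the abelian group $G$, writing its action on $T_{P,X}$ as $g\mapsto\operatorname{diag}(\alpha(g),\beta(g))$ for homomorphisms $\alpha,\beta\colon G\to\CC^*$; the previous observation says $\alpha(g)\neq 1$ for all $g\neq 1$, so $\alpha$ is injective and $G$ embeds into $\CC^*$. Hence $G$ is cyclic.

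I do not expect a serious obstacle. The one point that deserves care is the passage from the geometric hypothesis that $X$ contains no curves to the algebraic conclusion that no non-trivial element of $G$ has eigenvalue $1$ on $T_{P,X}$, which relies on the linearizability of a finite-order automorphism near a fixed point and on the identification of its local fixed locus with a linear subspace.
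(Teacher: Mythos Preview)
Your proof is correct and follows essentially the same route as the paper's: linearize at the fixed point via Proposition~\ref{proposition:fixed-point}, use Lemma~\ref{lemma:no-even-order} to get that $|G|$ is odd and hence $G$ is abelian in $\GL_2(\CC)$, and then use the absence of curves (i.e.\ freeness in codimension one) to upgrade abelian to cyclic. The paper compresses the last two steps into a single sentence each, whereas you spell out why an odd-order subgroup of $\GL_2(\CC)$ is abelian and why no non-trivial element can have eigenvalue~$1$ on $T_{P,X}$; these are exactly the details the paper is suppressing.
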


\begin{proof}
Let $P\in X$ be a fixed point of $G$.
By Corollary~\ref{corollary:fixed-point} we have an embedding
$$
G\subset \GL(T_{P, X})\cong\GL_2(\CC).
$$
Since the group $G$ does not contain elements of order~$2$ by Lemma~\ref{lemma:no-even-order},
the order of $G$ is odd. Hence $G$ is
abelian by Theorem~\ref{theorem:PGL2}.
Recall that the action of $G$ is free in codimension one.
By Corollary~\ref{corollary:fixed-point}, the action of $G$ on~\mbox{$T_{P,X}\cong\CC^2$} is faithful.

Suppose that $G$ is not a cyclic group.
Since $G$ is abelian, its action on $\CC^2$ is diagonalizable and so
there exists a non-trivial element~\mbox{$g\in G$} such that
$g$ has an eigen-vector with eigen-value $1$ in $T_{P,X}$. By Corollary~\ref{corollary:fixed-locus}
there exists an analytic germ of a curve in a neighborhood of $P$ in $X$ that consists
of fixed points of~$g$. The latter is impossible since the action of $g$ is free in codimension one.
The obtained contradiction shows that the group $G$ is cyclic.
\end{proof}

\begin{lemma}\label{lemma:same-fixed-points}
Let $X$ be a compact complex surface
with $\chit(X)\neq 0$. Suppose that $X$ contains no curves.
Let $G\subset\overline{\Aut}(X)$ be a finite cyclic subgroup, and $g\in G$ be
a non-trivial element. Then $g$ has the same set of fixed points as $G$.
\end{lemma}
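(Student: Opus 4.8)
The plan is to use the fixed-point structure established in the preceding lemmas, together with the Lefschetz fixed point formulas, to show that passing to a non-trivial subgroup of a cyclic group does not lose any fixed points. Let $G\subset\overline{\Aut}(X)$ be a finite cyclic subgroup of order $N$, let $g\in G$ be a non-trivial element, and let $\Xi(g)$ and $\Xi(G)$ denote their fixed loci. Since $X$ contains no curves, the action of $G$ (and hence of $g$) is free in codimension one, so both fixed loci are finite sets of isolated points; moreover by Lemma~\ref{lemma-b2-fixed-points} each non-trivial element $h\in G$ has exactly $|\Xi(h)|=\chit(X)$ isolated fixed points. Clearly $\Xi(G)\subseteq\Xi(g)$, so it suffices to prove $|\Xi(G)|\ge\chit(X)$, equivalently that $g$ and a fixed generator of $G$ have a common fixed point, and then to promote this to equality of the whole sets.

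First I would analyze a single fixed point $x$ of $g$ via the local representation. By Proposition~\ref{proposition:fixed-point} the group $G$ acts on $X$, and any element of $G$ fixing $x$ embeds into $\GL(T_{x,X})\cong\GL_2(\CC)$; since $G$ is cyclic and acts freely in codimension one, at a fixed point $x$ of $g$ the element $g$ acts on $T_{x,X}$ with eigenvalues $(\zeta^a,\zeta^b)$ where $\zeta$ is a primitive root of unity of order equal to the order of $g$, and both $a,b$ are coprime to that order (no eigenvalue equal to $1$). The key point is that $G$ permutes $\Xi(g)$ (as $g$ is central in the abelian group $G$), so a subgroup $G_1\subset G$ of index at most $|\Xi(g)|=\chit(X)$ fixes some point $x_0\in\Xi(g)$; but in fact I want the \emph{whole} group $G$ to fix each point of $\Xi(g)$, which is where the numerology of the holomorphic Lefschetz formula enters.

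The main step, and the hardest part, is the following: I would apply the holomorphic Lefschetz fixed point formula to $g$ and, more importantly, compare it across the cyclic group. For a generator $h$ of $G$ and each $d\mid N$, the element $h^d$ has exactly $\chit(X)$ isolated fixed points, and its holomorphic Lefschetz number $L(h^d)=\sum_{x\in\Xi(h^d)}\big((1-\alpha_x(h^d))(1-\beta_x(h^d))\big)^{-1}$ equals a fixed topological quantity (note $g$ acts trivially on $H^*(X,\ZZ)$, but one must be careful that it need not act trivially on $H^*(X,\OOO_X)$ — however for the relevant surfaces the holomorphic Euler characteristic and the way $\overline{\Aut}(X)$ acts are controlled as in the proof of Lemma~\ref{lemma:no-even-order}). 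The plan is to show that if some point $x\in\Xi(g)$ were \emph{not} fixed by all of $G$, then the $\langle g\rangle$-orbit sizes of points of $X$ under $G$ would force $|\Xi(G)|<\chit(X)$, and then to derive a contradiction by computing $L(g')$ for a suitable generator $g'$ of $G$ (or of $\langle g\rangle$) two ways: once summing over $\Xi(G)$ with the constraint that all contributions come from the common fixed points, and once using that $|\Xi(g')|=\chit(X)$. The inclusion $\Xi(G)\subseteq\Xi(g)$ combined with both sets having cardinality $\chit(X)$ (once established) then forces $\Xi(g)=\Xi(G)$, which is exactly the claim. I expect the delicate point to be justifying that the local eigenvalue data of $g$ at a point of $\Xi(g)$ is compatible only with that point being fixed by the full cyclic group $G$ — concretely, that one cannot have a short orbit under $G$ sitting inside $\Xi(g)$ — and this is where I would lean on the freeness in codimension one, the oddness of $|G|$ from Lemma~\ref{lemma:no-even-order}, and a careful count of isolated fixed points of the various powers $h^d$ via Lemma~\ref{lemma-b2-fixed-points}.
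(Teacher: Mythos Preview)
You have missed the one-line argument that the paper uses. Since $G$ is cyclic, a point is fixed by $G$ if and only if it is fixed by a generator $f$ of $G$; hence $\Xi(G)=\Fix(f)$. But $f$ is a non-trivial element of $\overline{\Aut}(X)$, so Lemma~\ref{lemma-b2-fixed-points} already gives $|\Fix(f)|=\chit(X)$. Combined with $\Xi(G)\subseteq\Xi(g)$ and $|\Xi(g)|=\chit(X)$, this forces $\Xi(G)=\Xi(g)$. That is the entire proof: the same lemma you invoke for $g$ applies verbatim to the generator $f$, and inclusion of finite sets of equal cardinality is equality.

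Your proposal, by contrast, never applies Lemma~\ref{lemma-b2-fixed-points} to the generator, and instead embarks on a programme involving local eigenvalue data, the holomorphic Lefschetz formula across powers $h^d$, and orbit counting. None of this is needed, and as written it is not a proof: the ``main step'' is only a plan, the claimed contradiction (``would force $|\Xi(G)|<\chit(X)$'') is asserted without justification, and the holomorphic Lefschetz numbers $L(h^d)$ are not computed or compared in any way that yields the desired equality. Your caveat that elements of $\overline{\Aut}(X)$ need not act trivially on $H^*(X,\OOO_X)$ is correct and would be a genuine obstacle to your approach. The fix is simply to notice that $\Xi(G)=\Fix(f)$ and apply the fixed-point count you already have.
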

\begin{proof}
For an arbitrary element $h\in G$ denote by $\Fix(h)$ the fixed locus of
$h$. By Lemma~\ref{lemma-b2-fixed-points} one has
$$
|\Fix(h)|=\chit(X)
$$
for every non-trivial element $h$.

Let $f$ be a generator of $G$. Then
for some positive integer $n$ one has
$f^n=g$, so that
$$
\Fix(f)\subset\Fix(g).
$$
Therefore,
one has $\Fix(f)=\Fix(g)$, which means that every non-trivial
element of $G$ has one and the same set of fixed points.
\end{proof}

\begin{lemma}\label{lemma-G-decomposition}
Let $X$ be a compact complex surface
with $\chit(X)\neq 0$. Suppose that $X$ contains no curves.
Then every finite subgroup $G\subset\overline{\Aut}(X)$ is a union
$G=\bigcup_{i=1}^m G_i$
of cyclic subgroups such that $G_i\cap G_j=\{1\}$ for $i\neq j$.
\end{lemma}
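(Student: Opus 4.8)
The plan is to take $G_1,\dots,G_m$ to be the maximal cyclic subgroups of $G$ and to check that they satisfy the required properties. Since $G$ is finite it has only finitely many subgroups, and every element $g\in G$ lies in the cyclic subgroup $\langle g\rangle$, which is contained in at least one maximal cyclic subgroup of $G$; thus $G=\bigcup_{i=1}^m G_i$ holds automatically, and the only thing left to prove is that $G_i\cap G_j=\{1\}$ whenever $i\neq j$.

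To prove this I would argue by contradiction. Suppose a non-trivial element $g$ lies in $G_i\cap G_j$ with $i\neq j$. Applying Lemma~\ref{lemma:same-fixed-points} to the cyclic subgroup $G_i$ and its non-trivial element $g$ gives $\Fix(g)=\Fix(G_i)$, and likewise $\Fix(g)=\Fix(G_j)$, so that $\Fix(G_i)=\Fix(G_j)$. Next, since $X$ contains no curves, the fixed locus of the finite-order element $g$ has no one-dimensional component, so Lemma~\ref{lemma-b2-fixed-points} yields $|\Fix(g)|=\chit(X)$, which is positive (being a cardinality, it is a non-negative integer, and it is non-zero by hypothesis); in particular the common locus $\Fix(G_i)=\Fix(G_j)=\Fix(g)$ is non-empty. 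Pick a point $x$ in it. Then both $G_i$ and $G_j$ are contained in the stabilizer $G_x$ of $x$ in $G$, which is cyclic by Lemma~\ref{lemma-G-fixed-point}; by maximality of $G_i$ and $G_j$ among cyclic subgroups of $G$ this forces $G_i=G_x=G_j$, a contradiction.

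The substantive input here is Lemma~\ref{lemma:same-fixed-points}: it says that all non-trivial elements of a cyclic subgroup share one and the same fixed-point set, which is precisely what makes two cyclic subgroups with a common non-trivial element acquire a common fixed point, through which they both factor into the cyclic stabilizer. I do not expect any genuine obstacle beyond a careful bookkeeping of these three lemmas; the remaining steps are routine finite group theory, and the only role of the hypothesis $\chit(X)\neq 0$ is to guarantee, via Lemma~\ref{lemma-b2-fixed-points}, that this common fixed-point set is non-empty.
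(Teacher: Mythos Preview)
Your proof is correct and follows essentially the same approach as the paper: both arguments rest on the observation that if two cyclic subgroups share a non-trivial element~$g$, then by Lemma~\ref{lemma-b2-fixed-points} and Lemma~\ref{lemma:same-fixed-points} they have a common fixed point, whose stabilizer is cyclic by Lemma~\ref{lemma-G-fixed-point} and contains both. Your choice to start directly with the maximal cyclic subgroups is a slight streamlining of the paper's presentation, which begins with an arbitrary covering by cyclic groups and merges overlapping pairs inductively; the underlying idea and the lemmas invoked are identical.
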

\begin{proof}
Choose some representation of $G$ as a union $G=\bigcup_{i=1}^m G_i$,
where $G_i$ are cyclic groups that possibly have non-trivial intersections.
Let $G_1$ and $G_2$ be subgroups such that $G_1\cap G_2\neq \{1\}$.
Let $g\in G_1\cap G_2$ be a non-trivial element. By
Lemma~\ref{lemma-b2-fixed-points}
it has a fixed point, say $x$.
By Lemma \ref{lemma-G-fixed-point} the stabilizer $G_x$ is a cyclic group.
By Lemma~\ref{lemma:same-fixed-points} the groups $G_1$ and $G_2$ fix the point
$x$,
so that $G_1, G_2\subset G_x$.
Replacing $G_1$ and $G_2$ by $G_x$, we proceed to construct the required
system of subgroups by induction.
\end{proof}

\begin{lemma}\label{lemma:no-curves}
Let $X$ be a compact complex surface
with $\chit(X)\neq 0$.
Suppose that $X$ contains no curves.
Then there exists a constant $J=J(X)$ such that any finite subgroup $G\subset\Aut(X)$
contains a normal cyclic subgroup of index at most $J$. In particular, the group $\Aut(X)$ is Jordan.
\end{lemma}
\begin{proof}
It is enough to prove that any finite subgroup of $\overline{\Aut}(X)$ contains a normal cyclic subgroup of index at most $J$.
Let $G\subset \overline{\Aut}(X)$ be a finite subgroup.
Let $\Xi\subset X$ be the set of points with non-trivial stabilizers in $G$.

By Lemma~\ref{lemma-G-decomposition} the group $G$ is a union $G=\bigcup_{i=1}^m
G_i$
of cyclic subgroups such that~\mbox{$G_i\cap G_j=\{1\}$} for $i\neq j$.
We claim that the stabilizer
of any point $x\in \Xi$ is one of the groups $G_1,\dots, G_m$.
Indeed, choose a point $x\in\Xi$, and let $G_x$ be its stabilizer.
Then $G_x$ is a cyclic group by Lemma~\ref{lemma-G-fixed-point}.
Let $g_x$ be a generator of
$G_x$, and let $1\le r\le m$ be the index such that the group $G_r$
contains~$g_x$. Then $G_x\subset G_r$. Now Lemma~\ref{lemma:same-fixed-points}
implies
that $G_x=G_r$.

By Lemma~\ref{lemma-b2-fixed-points} every element of $G$ has exactly $\chit(X)$
fixed points.
The set $\Xi$ is a disjoint union of orbits of the group~$G$.
Therefore, for some positive integers $k_i$ one has
\[
|\Xi|= m \chit(X)= \sum_{i=1}^m k_i [G: G_i].
\]
Hence, for some $i$ we have $[G: G_i]\le \chit(X)$, i.e.
$G$ contains a cyclic subgroup $G_i$ of index at most~$\chit(X)$.
This implies that $G$ contains a normal cyclic subgroup of bounded index.
\end{proof}

Now we are ready to prove Theorem~\ref{theorem:class-VII}.

\begin{proof}[Proof of Theorem~\ref{theorem:class-VII}]
If $\ad(X)=1$, then the assertion follows from Lemma~\ref{lemma:a-1}.
If~\mbox{$\ad(X)=0$} and $X$ contains at least one curve, then the assertion follows from
Lemma~\ref{lemma:at-least-one-curve}.
Finally, if $X$ contains no curves, then the assertion follows from
Lemma~\ref{lemma:no-curves}.
\end{proof}

An alternative way to prove Theorem~\ref{theorem:class-VII}
is provided by the following more general result due to I.\,Mundet i Riera.
Our proof of Theorem~\ref{theorem:class-VII} is a simplified version of
the proof of this result given in~\cite{Riera2016}.

\begin{theorem}[{\cite[Theorem~1.1]{Riera2016}}]
\label{theorem:Riera}
Let $X$ be a compact, orientable, connected four-dimensional smooth manifold
with $\chit(X)\neq 0$. Then the group of diffeomorphisms of $X$ is Jordan.
In particular, if $X$ is a compact complex surface with non-vanishing
topological Euler characteristic, then the group $\Aut(X)$ is Jordan.
\end{theorem}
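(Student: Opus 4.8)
The plan is to imitate the proof of Theorem~\ref{theorem:class-VII}, replacing its complex-analytic ingredients --- Proposition~\ref{proposition:fixed-point}, the holomorphic Lefschetz formula, and the finiteness of the set of curves on $X$ --- by differential-topological substitutes. Since the orientation-preserving diffeomorphisms of $X$ form a subgroup of index at most $2$, it is enough to bound the Jordan constant there; so let $G$ be a finite group of orientation-preserving diffeomorphisms of $X$. First I would average a Riemannian metric, so that $G$ acts by isometries and $\Fix(g)$ is a disjoint union of closed totally geodesic submanifolds for every $g\in G$. Next, let $\overline G\subset G$ be the subgroup acting trivially on $H^*(X,\ZZ)$; the quotient $G/\overline G$ has bounded finite subgroups by Theorem~\ref{theorem:Minkowski}, so by Lemma~\ref{lemma:group-theory}(i) it suffices to bound the Jordan constant of $\overline G$.

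The geometric input is the topological Lefschetz fixed point formula: for $1\neq g\in\overline G$ one has $\chit(\Fix(g))=\sum_i(-1)^i\dim H^i(X,\QQ)=\chit(X)\neq0$, so $\Fix(g)\neq\varnothing$; moreover, since $g$ preserves orientation, the normal representation along each component of $\Fix(g)$ lies in a special orthogonal group and has no nonzero invariant vectors, hence has even rank, so every component of $\Fix(g)$ is an isolated point or a closed orientable surface. This is the analogue of Lemma~\ref{lemma-b2-fixed-points}; the only genuine departure from \S\ref{section:class-VII} is that such a fixed surface may now have arbitrary genus, whereas there it was necessarily an elliptic curve and so contributed $0$ to the Euler characteristic.

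The crux --- which I also expect to be the principal obstacle --- is to prove that every finite $\overline G$ contains a subgroup $\overline G_0$ of index bounded in terms of $X$ alone that fixes a point of $X$. Granting this, the differential at the fixed point embeds $\overline G_0$ into $\GL(T_xX)\cong\GL_4(\RR)\subset\GL_4(\CC)$ --- faithfully, since a nontrivial isometry fixing a point and acting trivially on its tangent space is the identity on the connected manifold $X$ --- and $\GL_4(\CC)$ is Jordan by Theorem~\ref{theorem:Jordan}, so $\overline G_0$, and hence $\overline G$ and $G$, have normal abelian subgroups of bounded index. To produce $\overline G_0$ I would argue by cases, mirroring \S\ref{section:class-VII}. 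If some nontrivial element of $\overline G$ has a two-dimensional fixed component $S$ with $\chit(S)\neq0$ --- a sphere or a surface of genus $\ge2$ --- then a subgroup $\overline G_S$ of bounded index stabilises $S$, the image of $\overline G_S$ in $\operatorname{Diff}(S)$ either has bounded order or, in the spherical case, fixes a point of $S$ after passing to a subgroup of bounded index (by Hurwitz's bound once $S$ is known to have bounded genus, and by the description of finite subgroups of $O(3)$ for the sphere), and the kernel of $\overline G_S\to\operatorname{Diff}(S)$ fixes $S$ pointwise and acts on its normal bundle through $O(2)$, hence contains a cyclic subgroup of index at most $2$; combining these yields a bounded-index subgroup of $\overline G$ fixing a point of $X$. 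If instead every two-dimensional fixed component is a torus, then $\chit(X)$ is accounted for entirely by isolated fixed points, the equality $|\Xi_0(g)|=\chit(X)$ of Lemma~\ref{lemma-b2-fixed-points} holds for every $g\neq1$, and one runs the orbit-counting arguments of Lemmas~\ref{lemma-cyclic-subgroups}--\ref{lemma:no-curves}, using that a cyclic subgroup fixing a torus pointwise must in addition have an isolated fixed point, since a disjoint union of tori cannot have Euler characteristic $\chit(X)\neq0$.

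What makes this last step delicate --- and where \S\ref{section:class-VII} had it easy --- is the uniformity of the bounds in $\overline G$: there one used that $X$ carries only finitely many curves, a property of $X$ alone, whereas here one must bound, independently of $\overline G$, the number of positive-dimensional fixed strata and their genera before any of the stabilisers above has controlled index. The natural tools are Smith theory (for $g$ of $p$-power order the topology of $\Fix(g)$ --- in particular its number of components and the genera of its surface components --- is bounded in terms of $X$, and a $p$-group has a global fixed point whenever $p\nmid\chit(X)$) together with Mann--Su-type bounds on the ranks of finite groups acting on $X$ (cf.\ Theorem~\ref{theorem:Riera-generators}); assembling these into a single uniform index bound is the technical heart of the argument. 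Finally, the second assertion of the statement is immediate, since $\Aut(X)$ is a subgroup of the diffeomorphism group of $X$ and subgroups of Jordan groups are Jordan.
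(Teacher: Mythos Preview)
The paper does not prove this theorem at all: it is quoted from \cite{Riera2016} without proof, and the only comment is that the paper's own proof of the weaker Theorem~\ref{theorem:class-VII} is ``a simplified version of the proof of this result given in~\cite{Riera2016}''. So there is nothing to compare against beyond that remark, and your plan --- to undo the simplification by restoring the differential-topological ingredients that \S\ref{section:class-VII} was able to shortcut --- is exactly the right spirit.

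That said, your sketch underestimates how much breaks in the ``all fixed surfaces are tori'' case when you try to transplant Lemmas~\ref{lemma:no-even-order}--\ref{lemma:no-curves}. Those lemmas rest on two facts that have no smooth analogue: the holomorphic Lefschetz formula forces $\overline{\Aut}(X)$ to have no elements of even order (Lemma~\ref{lemma:no-even-order}), and ``no curves'' forces the action to be free in codimension~$1$. Together these make every point-stabiliser cyclic (Lemma~\ref{lemma-G-fixed-point}), which in turn drives the disjoint-cyclic decomposition (Lemma~\ref{lemma-G-decomposition}) and the orbit count (Lemma~\ref{lemma:no-curves}). In the smooth setting a point-stabiliser is merely a finite subgroup of $\mathrm{SO}(4)$, which need be neither cyclic nor abelian, and an element of $\overline G$ may well have order~$2$; so the decomposition $G=\bigcup G_i$ with $G_i\cap G_j=\{1\}$ and the identity $|\Xi|=m\cdot\chit(X)$ both fail, and the counting argument collapses. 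Your sentence ``one runs the orbit-counting arguments of Lemmas~\ref{lemma-cyclic-subgroups}--\ref{lemma:no-curves}'' hides a genuine gap here, not just a routine adaptation.

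You are right that the uniformity of the bounds --- controlling, in terms of $X$ alone, the number and genera of fixed surfaces and the index of a subgroup with a common fixed point --- is the technical heart; and you are right that Smith theory and Mann--Su-type rank bounds are the relevant tools. But assembling them is the whole content of \cite{Riera2016}, and is not a matter of ``running'' the arguments of \S\ref{section:class-VII}.
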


Note however that our proof of Theorem~\ref{theorem:class-VII}
implies that for a compact complex surface~$X$
with $\chit(X)\neq 0$ and containing no curves,
there exists a constant $J$ such that
for every finite subgroup $G\subset\Aut(X)$
there exists a normal \emph{cyclic} subgroup of index at most~$J$
(see Lemma~\ref{lemma:no-curves}),
while the results of \cite{Riera2016}
provide only a normal abelian subgroup
of bounded index generated by at most~$2$ elements.

\section{Hopf surfaces}
\label{section:Hopf}

In this section we study automorphism groups of Hopf surfaces, and
make some general conclusions about automorphisms of surfaces of class~VII.

Recall
that a \emph{Hopf surface} $X$ is a compact complex surface
whose universal cover is (analytically)
isomorphic to $\CC^2\setminus\{0\}$.
Thus $X\cong \left(\CC^2\setminus\{0\}\right)/\Gamma$, where $\Gamma\cong \pi_1(X)$
is a group acting freely on $\CC^2\setminus\{0\}$.
A Hopf surface $X$ is said to be \emph{primary} if~\mbox{$\pi_1(X)\cong \ZZ$}.
One can show that a primary Hopf surface is
isomorphic to a quotient
$$
X(\alpha,\beta,\lambda,n)=\left(\CC^2\setminus\{0\}\right)/\Lambda,
$$
where
$\Lambda\cong\ZZ$ is a group generated by the transformation
\begin{equation}\label{eq:Hopf}
(x,y)\mapsto (\alpha x+\lambda y^n, \beta y).
\end{equation}
Here $n$ is a positive integer, and
$\alpha$ and $\beta$ are complex numbers satisfying
$$
0 < |\alpha|\le |\beta|<1;
$$
moreover, one has $\lambda=0$, or $\alpha=\beta^n$.
A \emph{secondary} Hopf surface is a quotient of a primary Hopf surface by a
free action of a finite group. Every Hopf surface is either primary or secondary.
We refer the reader to~\mbox{\cite[\S10]{Kodaira-structure-2}} for details.

The following result shows the significance of Hopf and Inoue surfaces (see \cite{Inoue1974})
from the point of view of Enriques--Kodaira classification.

\begin{theorem}[{see \cite{Bogomolov} and~\cite{Teleman-classification}}]
\label{theorem:Bogomolov}
Every minimal surface of class VII with vanishing second Betti number is either a Hopf surface
or an Inoue surface.
\end{theorem}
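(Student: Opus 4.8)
The plan is to run the Enriques--Kodaira classification (Theorem~\ref{theorem:classification}) against the numerical invariants of a minimal surface $X$ of class~VII with $\bb_2(X)=0$: one has $\varkappa(X)=-\infty$, $\bb_1(X)=1$, $\hh^{1,0}(X)=0$, $\hh^{0,1}(X)=1$, and the algebraic dimension satisfies $\ad(X)\in\{0,1\}$. I would split the argument according to whether $X$ contains an irreducible curve; recall that Hopf surfaces always do (\cite[Theorem~32]{Kodaira-structure-2}) whereas Inoue surfaces never do, so the dichotomy is the right one, but in each case one needs an independent argument rather than the classification itself.

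\emph{Case~1 ($X$ contains a curve $C$).} Since $\bb_2(X)=0$ we have $H^2(X,\RR)=0$, so every divisor on $X$ is numerically trivial; in particular $C^2=0$ and, by the adjunction formula, $C\cdot\cc_1(\KKK_X)=0$, so $C$ has arithmetic genus one. If $\ad(X)=1$, then the algebraic reduction $\pi\colon X\to B$ is an elliptic fibration, and Kodaira's classification of elliptic surfaces of class~VII identifies $X$ as a Hopf surface. If $\ad(X)=0$, I would appeal to Kodaira's structure theory for surfaces of class~VII containing a curve: a suitable neighbourhood of $C$ gives rise to a global spherical shell on $X$, and a surface of class~VII with $\bb_2=0$ admitting such a shell is a Hopf surface (\cite{Kodaira-structure-2}).

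\emph{Case~2 ($X$ contains no curve).} Then $\ad(X)=0$ automatically, and one must show that $X$ is an Inoue surface; this is the substantive content, namely Bogomolov's theorem \cite{Bogomolov}. The aim is to reconstruct the Inoue uniformization $\tilde X\cong\CC\times\mathbb{H}$ together with the solvable affine action of $\pi_1(X)$ on it. Bogomolov's route studies holomorphic tensor fields twisted by the flat line bundles in $\Pic^0(X)\cong\CC^*$ and produces a holomorphic foliation carrying a transverse projective structure, whose leaf space and monodromy recover the Inoue data; Teleman's alternative route \cite{Teleman-classification} uses the moduli space of $\mathrm{PU}(2)$-instantons (equivalently, stable rank-two bundles) on $X$ to force a reduction of the structure group, hence the same uniformization. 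I expect this no-curve case to be the main obstacle: it is exactly Bogomolov's theorem, it has no elementary proof, and its full justification required substantial later work (by Li--Yau--Zheng and by Teleman). Combining the two cases yields the statement.
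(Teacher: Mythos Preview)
The paper does not supply its own proof of this theorem: it is stated with citations to \cite{Bogomolov} and \cite{Teleman-classification} and used as a black box (in the proof of Corollary~\ref{corollary:class-VII} and in Lemma~\ref{lemma-Inoue-quotient}). So there is nothing in the paper to compare your proposal against.

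That said, your outline is broadly the shape of the argument in the literature, and your identification of Case~2 as the hard part is correct. One quibble in Case~1: the appeal to global spherical shells is anachronistic and not the right mechanism when $\bb_2(X)=0$. Kato's spherical-shell theory is aimed at the $\bb_2>0$ regime; for $\bb_2=0$ the ``contains a curve'' case was already settled by Kodaira in \cite{Kodaira-structure-2} by a direct analysis (the curve, having arithmetic genus one on a surface with $H^2(X,\RR)=0$, forces an elliptic fibration or an explicit Hopf quotient), without invoking spherical shells. Apart from this, your summary of Case~2 (Bogomolov's foliation argument via twisted tensor fields, with the Li--Yau--Zheng and Teleman repairs) is an accurate high-level description of what the cited references do.
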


Automorphisms of Hopf surfaces were studied in detail in \cite{Kato},
\cite{Kato1}, \cite{Namba}, \cite{Wehler}, and~\cite{Matumoto}.
Our approach does not use these results.

We will need the following easy observation.

\begin{lemma}\label{lemma:centralizers}
Let
$$
M=\left(
\begin{array}{cc}
\alpha & \lambda\\
0 & \beta
\end{array}
\right)\in\GL_2(\CC)
$$
be an upper triangular matrix, and $Z\subset\GL_2(\CC)$ be the centralizer
of~$M$. The following assertions hold.
\begin{enumerate}
\item
If $\alpha=\beta$ and $\lambda=0$, then $Z=\GL_2(\CC)$.
\item
If $\alpha\neq\beta$ and $\lambda=0$, then $Z\cong(\CC^*)^2$.
\item
If $\alpha=\beta$ and $\lambda\neq 0$,
then $Z\cong\CC^*\times\CC^+$.
\end{enumerate}

\end{lemma}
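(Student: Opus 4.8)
The plan is to prove Lemma~\ref{lemma:centralizers} by a direct matrix computation, conjugating to a convenient normal form in cases (1) and (2), and working with the Jordan block directly in case (3).

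First I would dispose of case (1): if $\alpha=\beta$ and $\lambda=0$, then $M=\alpha\cdot\mathrm{Id}$ is a scalar matrix, so it commutes with everything, and $Z=\GL_2(\CC)$ is immediate.

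Next, for case (2), where $\alpha\neq\beta$ and $\lambda=0$, the matrix $M=\operatorname{diag}(\alpha,\beta)$ is diagonal with distinct eigenvalues. I would take an arbitrary $N=\begin{pmatrix} a & b \\ c & d\end{pmatrix}\in\GL_2(\CC)$ and compute $MN$ and $NM$; equating them forces $b(\alpha-\beta)=0$ and $c(\beta-\alpha)=0$, hence $b=c=0$ since $\alpha\neq\beta$. Thus $N$ is diagonal, and conversely every invertible diagonal matrix commutes with $M$, so $Z$ is exactly the group of invertible diagonal matrices, which is isomorphic to $(\CC^*)^2$.

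For case (3), with $\alpha=\beta$ and $\lambda\neq 0$, the matrix $M$ is a single Jordan block (up to the harmless rescaling $\begin{pmatrix}\alpha & \lambda\\ 0 & \alpha\end{pmatrix}$, conjugate by $\operatorname{diag}(\lambda,1)$ to $\begin{pmatrix}\alpha & 1\\ 0 & \alpha\end{pmatrix}$, i.e.\ to $\alpha\cdot\mathrm{Id}+E_{12}$). Since scalars are central, the centralizer of $M$ coincides with the centralizer of the nilpotent $E_{12}$. A direct computation with $N=\begin{pmatrix} a & b \\ c & d\end{pmatrix}$ shows $N E_{12}=\begin{pmatrix}0 & a\\ 0 & c\end{pmatrix}$ and $E_{12}N=\begin{pmatrix}c & d\\ 0 & 0\end{pmatrix}$, so commutation forces $c=0$ and $a=d$. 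Hence $N=\begin{pmatrix}a & b\\ 0 & a\end{pmatrix}$ with $a\neq 0$, and conversely all such matrices commute with $E_{12}$. The map $\begin{pmatrix}a & b\\ 0 & a\end{pmatrix}\mapsto (a,b/a)$ is then an isomorphism onto $\CC^*\times\CC^+$, since $\begin{pmatrix}a & b\\ 0 & a\end{pmatrix}\begin{pmatrix}a' & b'\\ 0 & a'\end{pmatrix}=\begin{pmatrix}aa' & ab'+a'b\\ 0 & aa'\end{pmatrix}$ gives $(aa', b/a+b'/a')$ in the target. (Since conjugation is an inner automorphism of $\GL_2(\CC)$, the isomorphism type of the centralizer is unaffected by the reductions performed above.)

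This lemma is entirely elementary; there is no real obstacle, only the bookkeeping of a few $2\times 2$ multiplications, and the only point requiring a moment's care is exhibiting the explicit isomorphism $Z\cong\CC^*\times\CC^+$ in case (3) rather than merely identifying $Z$ as a set of matrices.
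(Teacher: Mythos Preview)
Your proof is correct and is exactly the direct computation the paper intends; the paper's own proof reads in its entirety ``Simple linear algebra.'' One inconsequential slip: in case~(3) the conjugating matrix should be $\operatorname{diag}(1,\lambda)$ (or $\operatorname{diag}(\lambda^{-1},1)$) rather than $\operatorname{diag}(\lambda,1)$, but you never actually use this reduction, since $M=\alpha\,\mathrm{Id}+\lambda E_{12}$ with $\lambda\neq 0$ already yields $Z(M)=Z(E_{12})$ directly.
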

\begin{proof}
Simple linear algebra.
\end{proof}

\begin{lemma}\label{lemma:secondary-Hopf}
Let $X$ be a Hopf surface.
Then the group $\Aut(X)$ is Jordan.
\end{lemma}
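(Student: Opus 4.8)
The plan is to reduce everything to automorphisms of a primary Hopf surface, since every Hopf surface $X$ admits a finite étale cover $X_0 \to X$ with $X_0 = X(\alpha,\beta,\lambda,n)$ primary; this realizes $\Aut(X)$ as commensurable with an appropriate subgroup of $\Aut(X_0)$ — more precisely, $\Aut(X)$ lifts to the normalizer of the deck group $F \cong \pi_1(X)/\pi_1(X_0)$ inside $\Aut(X_0)$, and the argument below for $\Aut(X_0)$ will apply verbatim to this normalizer. Since the Jordan property is inherited by subgroups and is insensitive to passing to a finite-index overgroup, it suffices to prove $\Aut(X_0)$ is Jordan for $X_0$ primary. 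First I would lift automorphisms of $X_0 = (\CC^2\setminus\{0\})/\Lambda$ to $\CC^2 \setminus \{0\}$: using that $\CC^2\setminus\{0\}$ is the universal cover, any $\varphi \in \Aut(X_0)$ lifts to a biholomorphism $\tilde\varphi$ of $\CC^2\setminus\{0\}$, which by Hartogs extends to a biholomorphism of $\CC^2$ fixing $0$; the lift is unique up to composition with a generator of $\Lambda$, and $\Aut(X_0)$ is identified with $N/\Lambda$ where $N = N_{\Aut(\CC^2,0)}(\Lambda)$ is the normalizer of $\Lambda$ in the group of germs (equivalently, global biholomorphisms fixing the origin) of $\CC^2\setminus\{0\}$.

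Next I would replace the full germ group by its linear part. Let $g \in N$ be the generator of $\Lambda$; by~\eqref{eq:Hopf} its linearization at $0$ is the upper-triangular matrix $M = \left(\begin{smallmatrix}\alpha & 0\\ 0 & \beta\end{smallmatrix}\right)$ when $\lambda = 0$, and otherwise (when $\alpha = \beta^n$ and $\lambda \neq 0$) it is diagonal with eigenvalues $\alpha, \beta$, but $g$ itself is not linear. In all cases the eigenvalues of the linearization have modulus strictly less than $1$ (the Poincaré domain), so by the Poincaré–Dulac theorem $g$ is analytically conjugate to a polynomial normal form, and in fact the centralizer and normalizer of $\Lambda$ in the germ group can be computed explicitly. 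I would instead argue more robustly: the derivative-at-$0$ map $D_0 \colon N \to \GL_2(\CC)$ has image in the normalizer of $\langle M \rangle$, hence in the centralizer $Z$ of $M$ up to finite index (the matrix $M$ has finitely many conjugates inside the commutative $\langle M\rangle$, so its normalizer equals its centralizer when the eigenvalues are multiplicatively independent, and in general the normalizer has $Z$ as a finite-index subgroup). The kernel of $D_0$ consists of germs fixing $0$ with trivial linear part that normalize $\Lambda$; by an argument as in Proposition~\ref{proposition:fixed-point} (filtering by powers of the maximal ideal) this kernel is small — in the relevant cases one checks it is trivial or has bounded finite subgroups, because a periodic biholomorphic germ with unipotent (trivial) linear part is the identity. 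Thus $N$ sits in an extension with bounded finite subgroups on the kernel side and image contained, up to finite index, in $Z$.

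The third step is purely group-theoretic. By Lemma~\ref{lemma:centralizers}, $Z$ is one of $\GL_2(\CC)$, $(\CC^*)^2$, or $\CC^* \times \CC^+$, according to the three cases for $(\alpha,\beta,\lambda)$. In the case $\lambda = 0$, $\alpha = \beta$ we have $M$ scalar, $Z = \GL_2(\CC)$, and $\Lambda \subset \CC^* \cdot \mathrm{Id}$ is central; then $\Aut(X_0) = \GL_2(\CC)/\Lambda$ (together with the kernel-of-$D_0$ contribution, which here is genuinely present and contributes holomorphic vector fields but still has bounded finite subgroups acting on it — or one argues directly), and this is strongly Jordan by Lemma~\ref{lemma:GL-quotient-Z}. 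In the other two cases $Z$ is abelian — $(\CC^*)^2$ or $\CC^*\times\CC^+$ — so $Z/\Lambda$ is abelian, hence Jordan, and the whole group $\Aut(X_0)$ is Jordan by Lemma~\ref{lemma:group-theory}(i) applied to the extension with bounded-finite-subgroup pieces. Assembling: in every case $\Aut(X_0)$, and therefore $\Aut(X)$, is Jordan. The main obstacle I anticipate is controlling the kernel of $D_0$ — the non-linear germs commuting with, or normalizing, $\Lambda$ — particularly in the scalar case and the case $\lambda\neq 0$ where $g$ is not linear; handling this cleanly requires either the Poincaré–Dulac normal form to pin down the centralizer of $g$ in the germ group exactly, or a filtration argument in the spirit of Proposition~\ref{proposition:fixed-point} to show any finite subgroup of that kernel embeds into a general linear group via its action on a jet space, which is the technical heart of the proof.
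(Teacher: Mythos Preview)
Your approach is essentially the paper's: lift to the universal cover, extend across the origin by Hartogs, linearize via the tangent map $\varsigma\colon\widetilde{\Aut}(X)\to\GL_2(\CC)$, land in the normalizer of $\langle M\rangle$, and finish with Lemma~\ref{lemma:centralizers} together with Lemmas~\ref{lemma:diagonal-quotient-Z-1} and~\ref{lemma:GL-quotient-Z}. Two simplifications are worth noting.

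First, the preliminary reduction to a primary Hopf surface is unnecessary. The paper works directly with an arbitrary Hopf surface $X=(\CC^2\setminus\{0\})/\Gamma$: one replaces $\Lambda\cong\ZZ$ by a characteristic subgroup of $\Gamma$ (just pass to the subgroup of index $|\Gamma/\Lambda|$), and then the normalizer of $\varsigma(\Gamma)$ in $\GL_2(\CC)$ is automatically contained in the normalizer $\Omega$ of $\varsigma(\Lambda)$. This avoids the detour through $\Aut(X_0)$ and the commensurability bookkeeping.

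Second, and more importantly, the ``technical heart'' you anticipate --- controlling the kernel of $D_0$, possibly via Poincar\'e--Dulac normal forms --- dissolves entirely once you invoke Corollary~\ref{corollary:fixed-point}. For a given finite $G\subset\Aut(X)$, its preimage $\tilde G\subset\widetilde{\Aut}(X)$ contains $\Lambda$ as a subgroup of finite index, and $\varsigma|_\Lambda$ is injective (the matrix $M$ has infinite order since $|\alpha|,|\beta|<1$). Corollary~\ref{corollary:fixed-point} then gives immediately that $\varsigma|_{\tilde G}$ is injective. No analysis of the full germ group, its nonlinear centralizers, or jet spaces is needed; your remark that ``a periodic biholomorphic germ with trivial linear part is the identity'' is exactly Proposition~\ref{proposition:fixed-point}, and that is all that is required.
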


\begin{proof}
The non-compact surface $\CC^2\setminus\{0\}$ is the universal
cover of $X$. Moreover, $X$ is obtained from~\mbox{$\CC^2\setminus\{0\}$} as a quotient by a free action of some group
$\Gamma$ that contains a subgroup $\Lambda\cong\ZZ$ of finite index such that a generator of $\Lambda$ acts
as in~\eqref{eq:Hopf}; if $X$ is a primary Hopf surface, then $\Gamma=\Lambda$, and otherwise $\Lambda$ is identified
with the fundamental group of (some) primary Hopf surface covering $X$.
By Lemma~\ref{lemma:characteristic-subgroup} we can replace $\Lambda$ by its subgroup $\Lambda_0\cong\ZZ$ such that
$\Lambda_0$ is characteristic in $\Gamma$. Since the generator of $\Lambda_0$ is a power of a generator of $\Lambda$,
it also acts on $\CC^2\setminus\{0\}$ by a transformation of type~\eqref{eq:Hopf} (but possibly with different parameters
$\alpha$, $\beta$, and $\lambda$). Therefore, without loss of generality we may assume
that $\Lambda$ itself was a characteristic subgroup of $\Gamma$.

There is an exact sequence of groups
$$
1\longrightarrow \Gamma \longrightarrow
\widetilde{\Aut}(X)\longrightarrow\Aut(X)\longrightarrow 1,
$$
where $\widetilde{\Aut}(X)$ acts by automorphisms of $\CC^2\setminus\{0\}$.
By Hartogs theorem the action of~\mbox{$\widetilde{\Aut}(X)$}
extends to~$\CC^2$ so that $\widetilde{\Aut}(X)$ fixes the origin $0\in\CC^2$.
The image of the generator of~$\Lambda$ is mapped
by the natural homomorphism
$$
\varsigma\colon\widetilde{\Aut}
(X)\longrightarrow\GL\big(T_{0,\CC^2}\big)\cong\GL_2(\CC)
$$
to the matrix
\[
M=\begin{pmatrix}
\alpha & \lambda \updelta_1^n
\\
0& \beta
\end{pmatrix}
\]
where $\updelta$ is the Kronecker symbol.

Let $G\subset\Aut(X)$ be a finite subgroup, and
$\tilde{G}$ be its preimage in $\widetilde{\Aut}(X)$.
Thus, one has~\mbox{$G\cong\tilde G/\Gamma$}.
By Corollary~\ref{corollary:fixed-point1} we know that
$\varsigma\vert_{\tilde{G}}$ is an embedding.
Let $\Omega$ be the normalizer of $\varsigma(\Lambda)$ in $\GL_2(\CC)$.
By construction~$\varsigma(\tilde G)$ is contained in the normalizer
of~$\varsigma(\Gamma)$ in $\GL_2(\CC)$, which in turn is contained in $\Omega$ because
$\Lambda$ is a characteristic subgroup of~$\Gamma$. We see that
every finite subgroup of $\Aut(X)$ is contained in the group~\mbox{$\Omega/\varsigma(\Gamma)$}.
On the other hand, $\Omega/\varsigma(\Gamma)$ is a quotient of
$\Omega/\varsigma(\Gamma)$ is a quotient of $\Omega/\varsigma(\Lambda)$
by a finite subgroup isomorphic to $\varsigma(\Gamma)/\varsigma(\Lambda)$.
Thus, by Lemma~\ref{lemma:group-theory}(ii) it is sufficient to
show that the group~\mbox{$\Omega/\varsigma(\Lambda)$} is strongly Jordan.

Since~\mbox{$\varsigma(\Lambda)\cong\ZZ$},
the group $\Omega$ has a (normal) subgroup $\Omega'$ of index at most $2$ that
coincides with the centralizer of the matrix $M$.
It remains to check that the group $\Omega'/\varsigma(\Lambda)$ is strongly Jordan.
If $\lambda=0$ and $\alpha=\beta$, then this follows from
Lemmas~\ref{lemma:centralizers}(i) and~\ref{lemma:GL-quotient-Z}.
If either $\lambda=0$ and $\alpha\neq \beta$,
or $\lambda\neq 0$ and $n\ge 2$,
then this follows from
Lemmas~\ref{lemma:centralizers}(ii) and~\ref{lemma:diagonal-quotient-Z-1}.
If $\lambda\neq 0$ and $n=1$, then this follows from
Lemmas~\ref{lemma:centralizers}(iii) and~\ref{lemma:diagonal-quotient-Z-1}.
\end{proof}

\begin{remark}
Suppose that for a primary Hopf surface $X\cong X(\alpha,\beta,\lambda,n)$ one has
$\lambda=0$ and $\alpha^k=\beta^l$ for some positive integers $k$ and~$l$.
Then there is an elliptic fibration
$$
X\cong\left(\CC^2\setminus \{0\}\right)/\Lambda \to \PP^1\cong\left(\CC^2\setminus
\{0\}\right)/\CC^*,
$$
and one has an exact sequence of groups
$$
1\longrightarrow E\longrightarrow \Aut(X)\longrightarrow
\PGL_2(\CC),
$$
where $E$ is the group of points of the elliptic
curve~$\CC^*/\ZZ$.
\end{remark}

Finally, we put together
the information about automorphisms of surfaces of class~VII.

\begin{corollary}\label{corollary:class-VII}
Let $X$ be a minimal surface of class VII. Then
the group $\Aut(X)$ is Jordan.
\end{corollary}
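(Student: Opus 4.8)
The plan is to prove Corollary~\ref{corollary:class-VII} by reducing to the cases already settled. Let $X$ be a minimal surface of class VII, so that $\varkappa(X)=-\infty$, $\ad(X)\in\{0,1\}$, $\bb_1(X)=1$, and $\chit(X)\ge 0$ by Theorem~\ref{theorem:classification}. I would split according to the value of $\chit(X)$.

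First, suppose $\chit(X)\neq 0$, hence $\chit(X)>0$. Since a surface of class VII is never projective (its algebraic dimension is $0$ or $1$, and $\ad(X)=2$ would force $X$ projective), the surface $X$ is a non-projective compact complex surface with non-zero topological Euler characteristic. Therefore $\Aut(X)$ is Jordan by Theorem~\ref{theorem:class-VII} (equivalently, by Theorem~\ref{theorem:Riera}). This disposes of all minimal surfaces of class VII with positive second Betti number, among others.

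Next, suppose $\chit(X)=0$. For a surface of class VII one has $\bb_0(X)=\bb_4(X)=1$ and $\bb_1(X)=\bb_3(X)=1$, so $\chit(X)=2-2\bb_1(X)+\bb_2(X)=\bb_2(X)$, and hence $\bb_2(X)=0$. Thus $X$ is a minimal surface of class VII with vanishing second Betti number, and by Theorem~\ref{theorem:Bogomolov} it is either a Hopf surface or an Inoue surface. In the Hopf case $\Aut(X)$ is Jordan by Lemma~\ref{lemma:secondary-Hopf}; in the Inoue case $\Aut(X)$ is Jordan by Corollary~\ref{corollary:Inoue}. Combining the two cases completes the proof.

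I do not expect any genuine obstacle here: the statement is a bookkeeping corollary that simply assembles Theorem~\ref{theorem:class-VII} (or Theorem~\ref{theorem:Riera}), Theorem~\ref{theorem:Bogomolov}, Lemma~\ref{lemma:secondary-Hopf}, and Corollary~\ref{corollary:Inoue} according to the Enriques--Kodaira dichotomy $\chit(X)>0$ versus $\chit(X)=0$. The only point requiring a word of care is the identification $\chit(X)=\bb_2(X)$ for class VII surfaces, which follows at once from $\bb_1(X)=1$, and the remark that class VII surfaces are non-projective so that Theorem~\ref{theorem:class-VII} applies.
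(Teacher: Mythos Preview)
Your proof is correct and follows essentially the same approach as the paper: split according to whether $\bb_2(X)$ (equivalently $\chit(X)$) vanishes, invoke Theorem~\ref{theorem:Bogomolov} together with Lemma~\ref{lemma:secondary-Hopf} and Corollary~\ref{corollary:Inoue} in the vanishing case, and Theorem~\ref{theorem:class-VII} otherwise. The paper's proof splits directly on $\bb_2(X)$ rather than $\chit(X)$, but as you note these coincide for class~VII surfaces, so the arguments are the same.
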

\begin{proof}
If the second Betti number $\bb_2(X)$ vanishes, then $X$ is either a Hopf surface or an Inoue surface by
Theorem~\ref{theorem:Bogomolov}. Thus the assertion follows from
Lemma~\ref{lemma:secondary-Hopf} and Theorem~\ref{theorem:IK} in this case.
If $\bb_2(X)$ does not vanish, then the assertion follows from
Theorem~\ref{theorem:class-VII}.
\end{proof}

\begin{remark}
Except for Hopf surfaces, there are some other types of minimal compact complex surfaces
of class~VII whose automorphism groups have been studied in detail.
For instance, this is the case for so called hyperbolic and parabolic Inoue surfaces,
see~\cite{Pinkham} and~\cite{Fujiki-ParabolicInoue},
respectively.
Note that surfaces of both of these types have positive second Betti numbers
(and thus they are not to be confused with Inoue surfaces introduced in~\cite{Inoue1974}).
Also, automorphism groups of Enoki surfaces are known due to~\mbox{\cite[Theorem~3.1]{DlousskyKohler}}
and~\mbox{\cite[Proposition~3.2(2)]{DlousskyKohler}}.
\end{remark}

\section{Kodaira surfaces}
\label{section:Kodaira}

In this section we study automorphism groups of Kodaira surfaces.

Recall (see e.g. \cite[\S\,V.5]{BHPV-2004}) that a Kodaira surface
is a compact complex surface of Kodaira dimension $0$ with odd first Betti number.
There are two types of Kodaira surfaces: primary and secondary ones.
A primary Kodaira surface
is a compact complex surface with
the following invariants \cite[Theorem~19]{Kodaira-structure-1}:
\[
\newcommand{\cquad}{\hspace{4pt}}
\KKK_X \sim 0, \cquad
\ad(X)=1,\cquad
\bb_1(X)=3,\cquad \bb_2(X)=4,\cquad \chit(X)=0,\cquad \hh^{0,1}(X)=2,\cquad
\hh^{0,2}(X)=1.
\]
A secondary Kodaira surface is a quotient of a primary Kodaira surface by a
free action of a finite cyclic group.
The invariants of a secondary Kodaira surface are \cite[\S9]{Kodaira-structure-2}:
\[
\newcommand{\cquad}{\hspace{6pt}}
\ad(X)=1,\cquad
\bb_1(X)=1,\cquad \bb_2(X)=0,\cquad \chit(X)=0,\cquad \hh^{0,1}(X)=1,\cquad
\hh^{0,2}(X)=0.
\]

Due to Theorem~\ref{theorem:IK}, we know that automorphism groups of primary Kodaira surfaces are Jordan.
Thus, we are left with the case of secondary Kodaira surfaces.

\begin{lemma}
\label{lemma:secondary-Kodaira-surface}
Let $X$ be a secondary Kodaira surface.
Then the group $\Aut(X)$ is Jordan.
\end{lemma}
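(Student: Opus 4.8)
The plan is to reduce the case of a secondary Kodaira surface to the case of a primary one, which has already been handled in Lemma~\ref{lemma:primary-Kodaira-surface}. Recall that by definition a secondary Kodaira surface $X$ is a quotient $Y/H$ of a primary Kodaira surface $Y$ by a free action of a finite cyclic group $H$. The key observation is that $Y\to X$ is an unramified finite covering, so $\pi_1(Y)$ is a normal subgroup of finite index in $\pi_1(X)$, and in fact this covering is canonically associated to $X$: for instance one can recover $Y$ as a suitable finite \'etale cover determined by the torsion of $H^1(X,\ZZ)$, or by passing to the universal cover $\CC^2$ and taking the quotient by an appropriate characteristic subgroup of $\Gamma=\pi_1(X)$.

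First I would set up the universal cover picture exactly as in the proof of Lemma~\ref{lemma:secondary-Hopf}: writing $X=\CC^2/\Gamma$, there is an exact sequence
$$
1\longrightarrow \Gamma\longrightarrow \widetilde{\Aut}(X)\longrightarrow \Aut(X)\longrightarrow 1,
$$
where $\widetilde{\Aut}(X)$ is the group of automorphisms of $\CC^2$ normalizing $\Gamma$. Inside $\Gamma$ one finds a characteristic subgroup $\Gamma_Y\cong\pi_1(Y)$ of index $|H|$, corresponding to the primary Kodaira cover; characteristicity is what makes the construction $\Aut(X)$-equivariant. Then $\widetilde{\Aut}(X)$ also normalizes $\Gamma_Y$, so the quotient $\CC^2/\Gamma_Y=Y$ carries an action of $\widetilde{\Aut}(X)/\Gamma_Y$, giving a homomorphism $\Aut(X)\to\Aut(Y)/H'$ for the appropriate image $H'$ of $H$. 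Alternatively, and perhaps more cleanly, one argues directly: given a finite subgroup $G\subset\Aut(X)$, lift it to a finite subgroup acting on $Y$ (using that the cover $Y\to X$ is $\Aut(X)$-equivariant) to get an extension $1\to H\to \tilde G\to G\to 1$ with $\tilde G$ acting on the primary Kodaira surface $Y$; one could also form the quotient $\hat X=X/G$ and show as in Lemma~\ref{lemma-Inoue-quotient} that it is again a secondary Kodaira surface, so that $\pi_1(\hat X)$ controls things.

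The cleanest route is probably the following. The composite cover $Y\to X\to \hat X=X/G$ shows that $\pi_1(Y)$ is normal in $\pi_1(\hat X)$ with quotient $\tilde G$ an extension of $G$ by $H$; all of $\pi_1(Y)$, $\pi_1(X)$, $\pi_1(\hat X)$ are of the form $\HHH(r)\times\ZZ$ as in~\S\ref{subsection:Heisenberg-direct} (after replacing $r$ by a multiple if necessary, since a secondary Kodaira surface also has such a primary cover). By Lemma~\ref{lemma:Heisenberg-direct} there is a constant $r'$ depending only on $X$ such that $\tilde G$ has a normal abelian subgroup of index at most $r'$; intersecting with the kernel and passing to the image in $G$, we find that $G$ itself has a normal abelian subgroup of index at most $r'$. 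This proves that $\Aut(X)$ is Jordan. I expect the main technical point to be the verification that the primary Kodaira cover can be chosen \emph{characteristically} (equivalently, $\Aut(X)$-equivariantly), so that the whole automorphism group acts compatibly; this is where one uses either the group-theoretic structure of $\Gamma=\HHH(r)\times\ZZ$ — picking out a characteristic subgroup of the form $\HHH(r'')\times\ZZ$ — or the uniqueness of the algebraic reduction together with properties of the torsion in $H^1$. Everything else is a routine application of Lemma~\ref{lemma:group-theory}(i) and Lemma~\ref{lemma:Heisenberg-direct}, in complete analogy with Lemma~\ref{lemma:secondary-Hopf} and the Inoue case.
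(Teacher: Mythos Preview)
Your overall strategy --- lift to the primary Kodaira cover and use Lemma~\ref{lemma:primary-Kodaira-surface} --- is exactly the alternative route the paper mentions in the remark right after Corollary~\ref{corollary:Kodaira-surface}, so the idea is sound. The paper's own proof, however, is quite different: it works directly with the algebraic reduction $\pi\colon X\to B$, shows $B\cong\PP^1$, uses the canonical bundle formula to count multiple fibers (there are three or four), and concludes that the image of $\Aut(X)$ in $\Aut(B)$ is finite; then Lemma~\ref{lemma:group-theory}(i) finishes. That argument never touches the primary cover or any group theory from Appendix~\ref{section:appendix}.

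Your ``cleanest route'' contains two real gaps. First, the claim that $\pi_1(X)$ and $\pi_1(\hat X)$ are of the form $\HHH(r)\times\ZZ$ is false for \emph{secondary} Kodaira surfaces: their fundamental groups are nontrivial extensions of a finite cyclic group by $\HHH(r)\times\ZZ$, not of that product form themselves, so Lemma~\ref{lemma:Heisenberg-direct} does not apply to the inclusion $\pi_1(Y)\subset\pi_1(\hat X)$. Second, forming $\hat X=X/G$ as a Kodaira surface presupposes that $G$ acts freely, which you do not establish; the argument of Lemma~\ref{lemma:again-Kodaira} relies on the base $B$ being elliptic, whereas here $B\cong\PP^1$, so that proof does not carry over.

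The version of your argument that does work is the direct lifting you mention in passing: take the canonical cover $Y\to X$ associated to the torsion line bundle $\KKK_X$ (this is automatically $\Aut(X)$-equivariant, which resolves your ``main technical point''), lift a finite $G\subset\Aut(X)$ to a finite $\tilde G\subset\Aut(Y)$, apply the Jordan property of $\Aut(Y)$ from Lemma~\ref{lemma:primary-Kodaira-surface}, and push the resulting normal abelian subgroup back down to $G$. This is clean and avoids both problems above; it is what the paper's remark has in mind (via Lemma~\ref{lemma:group-theory}(ii) and Theorem~\ref{theorem:Riera-generators}).
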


\begin{proof}
Since $\ad(X)=1$, the algebraic reduction is an $\Aut(X)$-equivariant elliptic fibration~\mbox{$\pi\colon X\to B$}.
Thus there
is an exact sequence of groups
$$
1\longrightarrow
\Aut(X)_{\pi}\longrightarrow\Aut(X)\longrightarrow\Gamma\longrightarrow 1,
$$
where the action of $\Aut(X)_{\pi}$ is fiberwise with
respect to $\pi$, and $\Gamma$ is a subgroup of $\Aut(B)$.

We claim that the group $\Aut(X)_{\pi}$ is Jordan.
Indeed, if $H$ is a finite subgroup of~\mbox{$\Aut(X)_{\pi}$},
then $H$ acts faithfully on a typical fiber of~$\pi$,
which is a smooth elliptic curve.
This implies that $H$ has a normal abelian subgroup of
index at most~$6$.

Since
\[
\hh^{1,0}(X)=\bb_1(X)-\hh^{0,1}(X)= 0,
\]
the base curve $B$ is rational.
Furthermore, one has
\[
\chi(\OOO_X)=\hh^{0,0}(X)-\hh^{0,1}(X)+\hh^{0,2}(X)=1-1+0=0.
\]
By the canonical bundle formula (see e.g. \cite[Theorem~V.12.1]{BHPV-2004})
we have
\[
\KKK_X\sim\pi^*\left(\KKK_B\otimes\mathcal{L}\right)\otimes\OOO_X\left(\sum (m_i-1) F_i\right),
\]
where $F_i$ are all (reduced) multiple fibers of $\pi$, the fiber
$F_i$ has multiplicity $m_i$, and $\mathcal{L}$ is a line bundle
of degree $\chi(\OOO_X)=0$.
Since $X$ has Kodaira dimension~$0$,
we see that
\begin{equation*}
\sum (1-1/m_i)=2.
\end{equation*}
In particular, the number of multiple fibers equals either $3$ or $4$.
This means that $\Gamma$ has a finite non-empty
invariant subset in $B\cong\PP^1$ that consists of $3$ or $4$ points. Hence $\Gamma$ is finite,
so that the assertion follows by Lemma~\ref{lemma:group-theory}(i).
\end{proof}

An alternative way to prove the Jordan property for the automorphism group of a secondary Kodaira
surface is to use the fact that its canonical cover is a primary Kodaira surface
together with Lemma~\ref{lemma:group-theory}(ii) and Theorem~\ref{theorem:Riera-generators}.

\section{Non-negative Kodaira dimension}
\label{section:non-negative}

In this section we study automorphism groups of surfaces of non-negative Kodaira dimension,
and prove Theorems~\ref{theorem:Aut} and~\ref{theorem:main}.

The case of Kodaira dimension $2$ is well known.

\begin{theorem}\label{theorem:general-type}
Let $X$ be a (minimal) surface of general type. Then the group
$\Aut(X)$ is finite.
\end{theorem}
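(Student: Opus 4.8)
The plan is to use that for a minimal surface of general type the canonical bundle $\KKK_X$ is nef and big, hence semiample, so that for $m\gg 0$ the pluricanonical system $|\KKK_X^{\otimes m}|$ is base-point free and defines a birational morphism
\[
\phi_m\colon X\longrightarrow X_{\mathrm{can}}\subset\PP\big(H^0(X,\KKK_X^{\otimes m})^\vee\big)
\]
onto the canonical model $X_{\mathrm{can}}$. Since every automorphism of $X$ preserves $\KKK_X$, the group $\Aut(X)$ acts linearly on $W=H^0(X,\KKK_X^{\otimes m})$, and the induced homomorphism $\bar\rho\colon\Aut(X)\to\PGL(W^\vee)$ is injective: an automorphism $g$ acting as a scalar on $W$ would satisfy $\phi_m\circ g=\phi_m$, hence be the identity on a dense open subset of $X$, and therefore on all of $X$. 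Furthermore $\bar\rho(\Aut(X))$ is contained in the stabilizer
\[
H=\{h\in\PGL(W^\vee)\mid h(X_{\mathrm{can}})=X_{\mathrm{can}}\},
\]
which is Zariski closed in $\PGL(W^\vee)$ and therefore is a linear algebraic group. So it is enough to show that $H$ is finite.

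First I would note that $H$ acts faithfully on $X_{\mathrm{can}}$: the latter, being embedded by a complete linear system, spans its ambient projective space, and an element of $\PGL$ fixing a nondegenerate subvariety pointwise is trivial. Moreover this action lifts to a faithful action on the minimal resolution of $X_{\mathrm{can}}$, which is $X$ itself. Let $G\subseteq H$ be the connected component of the identity; then $G$ is a connected algebraic group acting on $X$, so $\operatorname{Lie}(G)$ embeds into $H^0(X,T_X)$. But a surface of general type carries no nonzero holomorphic vector field, i.e. $H^0(X,T_X)=0$: a nonzero vector field would integrate to a nontrivial one-parameter group of automorphisms, whose general orbit closure is a rational or an elliptic curve, and these curves would cover $X$, forcing $\varkappa(X)\le 1$ and contradicting $\varkappa(X)=2$. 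Since we work in characteristic zero, a connected algebraic group with trivial Lie algebra is trivial, so $G=\{e\}$; hence $\dim H=0$, and a zero-dimensional algebraic group is finite. Therefore $\Aut(X)\cong\bar\rho(\Aut(X))\subseteq H$ is finite.

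I expect the only genuinely non-formal ingredients to be the bigness and semiampleness of $\KKK_X$, which yield the birational pluricanonical morphism and are part of the structure theory underlying Theorem~\ref{theorem:classification}, and the vanishing $H^0(X,T_X)=0$ for surfaces of general type (equivalently, the absence of a faithful action of a positive-dimensional algebraic group); this is the step where real content enters. Both are classical, and indeed the whole statement is standard, so for the purposes of this paper it suffices to cite~\cite[Chapter~VII]{BHPV-2004} and to indicate the argument above.
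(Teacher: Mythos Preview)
Your argument is correct and is essentially the classical Matsumura proof: embed $\Aut(X)$ into the projective stabilizer of the canonical model, then kill the identity component via $H^0(X,T_X)=0$. The only point you leave implicit is that a minimal surface of general type is projective (Theorem~\ref{theorem:classification}), which is what justifies speaking of Zariski closures and algebraic groups; once this is said, everything goes through. Your sketch of $H^0(X,T_X)=0$ via orbit closures is a bit loose, but you rightly flag it as the substantive step and point to~\cite{BHPV-2004}.

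The paper, by contrast, gives no argument at all: it simply observes projectivity and cites~\cite{HMX}. That reference proves a far stronger and more recent boundedness result for birational automorphisms in all dimensions; for the statement at hand it is overkill, and the classical surface case goes back to Matsumura (1963). Your approach is more self-contained and better matched to the level of the result; the paper's approach is expedient but opaque. Either is acceptable in a paper where this theorem is purely auxiliary.
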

\begin{proof}
The surface $X$ is projective, see Theorem~\ref{theorem:classification}.
Thus the group $\Aut(X)$ is finite, see for instance~\cite{HMX}
where a much more general result is established for varieties of general
type of arbitrary dimension.
\end{proof}

Now we consider the case of Kodaira dimension $1$.

\begin{lemma}[{cf. \cite[Lemma~3.3]{ProkhorovShramov-dim3}}]
\label{lemma:Kodaira-dimension-1}
Let $X$ be a minimal surface of Kodaira dimension~$1$.
Then the group $\Aut(X)$
is Jordan.
\end{lemma}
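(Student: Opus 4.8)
The plan is to exploit the pluricanonical fibration of a minimal surface $X$ of Kodaira dimension $1$. For $m\gg 0$ the linear system $|\KKK_X^{\otimes m}|$ defines, after Stein factorization, a morphism $\phi\colon X\to B$ onto a smooth curve $B$ whose general fiber is a smooth elliptic curve; this $\phi$ is canonical, hence $\Aut(X)$-equivariant, so there is an exact sequence
\[
1\longrightarrow\Aut(X)_{\phi}\longrightarrow\Aut(X)\longrightarrow\Gamma_B,
\]
where $\Aut(X)_\phi$ acts fiberwise and $\Gamma_B\subset\Aut(B)$. Since a finite subgroup acting faithfully and fiberwise on $X$ acts faithfully on a general fiber, which is an elliptic curve, every finite subgroup of $\Aut(X)_\phi$ has a normal abelian subgroup of index at most $6$; hence $\Aut(X)_\phi$ is Jordan. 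By Lemma~\ref{lemma:group-theory}(i) it therefore suffices to show that the image $\Gamma_B$ of $\Aut(X)$ in $\Aut(B)$ has bounded finite subgroups (or is itself Jordan of an easily controlled type).

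The next step is to produce a finite nonempty $\Gamma_B$-invariant subset of $B$, or to bound the genus of $B$ from below, so as to constrain $\Gamma_B$. First, if the genus $g(B)\ge 2$, then $\Aut(B)$ is finite and we are done. If $g(B)=1$, I would argue that the canonical bundle formula
\[
\KKK_X\sim\phi^*\bigl(\KKK_B\otimes\mathcal{L}\bigr)\otimes\OOO_X\Bigl(\sum (m_i-1)F_i\Bigr),
\]
with $\deg\mathcal{L}=\chi(\OOO_X)\ge 0$, together with $\varkappa(X)=1$ forces the presence of at least one multiple fiber or of $\deg\mathcal{L}>0$; in the multiple-fiber case the (nonempty, finite) set of points of $B$ under the multiple fibers is $\Gamma_B$-invariant, so $\Gamma_B$ fixes a point of an elliptic curve and hence embeds into $\GL_1(\CC)$ composed with a finite group — in any case it has bounded finite subgroups; if there are no multiple fibers then $\deg\mathcal{L}>0$ and $\mathcal{L}$ is a $\Gamma_B$-linearizable ample line bundle on the elliptic curve $B$, and one bounds finite subgroups of the stabilizer of such a polarization in $\Aut(B)$ directly. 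The case $g(B)=0$ is the crucial one: here $\Aut(B)\cong\PGL_2(\CC)$ is not finite, so one must again extract a finite invariant set. Since $\varkappa(X)=1$, the canonical bundle formula gives $\sum(1-1/m_i)+2\deg\mathcal{L}=2\deg\KKK_B+\dots$; more precisely the "orbifold canonical degree" $2g(B)-2+\deg\mathcal{L}+\sum(1-1/m_i)$ must be positive, and with $g(B)=0$ this forces $\deg\mathcal{L}+\sum(1-1/m_i)>2$, so there are at least three multiple fibers (or $\deg\mathcal{L}$ is positive, giving a $\Gamma_B$-invariant effective divisor on $\P^1$). The union of the images of the multiple fibers is then a $\Gamma_B$-invariant set of at least $3$ points in $\P^1$, so $\Gamma_B$ is finite; again Lemma~\ref{lemma:group-theory}(i) finishes the proof. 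This is exactly the mechanism used in the proof of Lemma~\ref{lemma:secondary-Kodaira-surface}.

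The main obstacle I anticipate is the bookkeeping in the $g(B)\le 1$ cases: one must be careful that $\deg\mathcal{L}=\chi(\OOO_X)$ can be $0$, in which case the existence and number of multiple fibers is the only source of rigidity, and one needs the numerical inequality from $\varkappa(X)=1$ (namely that the $\QQ$-divisor $\KKK_B+\mathcal{L}+\sum(1-1/m_i)P_i$ has positive degree) stated correctly to guarantee a $\Gamma_B$-invariant finite set of the right size. Once that inequality is in hand, each case reduces to one of: $\Aut(B)$ finite, a polarized elliptic curve (bounded finite subgroups), or a finite invariant subset of $\P^1$ forcing $\Gamma_B$ finite; in all cases $\Aut(X)$ is an extension of a group with bounded finite subgroups by the Jordan group $\Aut(X)_\phi$, hence Jordan by Lemma~\ref{lemma:group-theory}(i).
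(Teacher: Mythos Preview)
Your overall architecture---pluricanonical fibration, exact sequence, Jordan property for $\Aut(X)_\phi$, then bounding $\Gamma_B$ via an invariant finite subset of $B$---is exactly the paper's. The difference, and the gap, is in how you handle the possibility $\deg\mathcal{L}=\chi(\OOO_X)>0$.

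For $g(B)=0$ you write that $\deg\mathcal{L}>0$ ``gives a $\Gamma_B$-invariant effective divisor on $\PP^1$''. This is not justified: on $\PP^1$ every line bundle of degree $d$ is $\OOO(d)$, it is $\PGL_2$-linearizable, and it has no canonical effective section, so knowing $\deg\mathcal{L}>0$ alone produces no $\Gamma_B$-invariant finite set. One can still have, say, $\chi(\OOO_X)=1$ and only two multiple fibers (e.g.\ $m_1=m_2=3$), so your dichotomy ``$\ge 3$ multiple fibers or $\deg\mathcal{L}>0$'' does not by itself force $\Gamma_B\subset\PGL_2(\CC)$ to be finite. The analogous step for $g(B)=1$ (arguing that $\Gamma_B$ preserves the class of $\mathcal{L}$ in $\Pic(B)$) is plausible but you have not justified why $\phi^*$ is injective enough on $\Pic$ to deduce $g^*\mathcal{L}\cong\mathcal{L}$ from the equivariance of $\KKK_X$.

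The paper sidesteps this entirely. Before invoking the canonical bundle formula it observes: if some fiber $F$ has $F_{\red}$ not a smooth elliptic curve, then every component of $F$ is rational, and Lemma~\ref{lemma:rational-curve} applied to the (finite, $\Aut(X)$-invariant) set of such components shows $\Aut(X)$ is Jordan directly. In the remaining case every set-theoretic fiber is smooth elliptic, so $\chit(X)=0$, Noether gives $\chi(\OOO_X)=0$, hence $\deg\mathcal{L}=0$. Then the inequality $2g(B)-2+\sum(1-1/m_i)>0$ contains only the multiple-fiber contribution, and your counting (at least one multiple fiber if $g(B)=1$, at least three if $g(B)=0$) goes through cleanly. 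Inserting this preliminary reduction repairs your argument.
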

\begin{proof}
Let $\phi\colon X\to B$ be the pluricanonical
fibration, where $B$ is some (smooth) curve.
It is equivariant with respect to the action of $\Aut(X)$. Thus there
is an exact sequence of groups
$$
1\longrightarrow
\Aut(X)_{\phi}\longrightarrow\Aut(X)\longrightarrow\Gamma\longrightarrow 1,
$$
where the action of $\Aut(X)_{\phi}$ is fiberwise with
respect to $\phi$, and $\Gamma$ is a subgroup of~\mbox{$\Aut(B)$}.
As in the proof of Lemma~\ref{lemma:secondary-Kodaira-surface}, we see that
the group $\Aut(X)_{\phi}$ is Jordan.
Hence by Lemma~\ref{lemma:group-theory}(i) it is enough to check that
$\Gamma$ has bounded finite subgroups.
In particular, this holds if the genus of $B$ is at least~$2$,
since the group $\Aut(B)$ is finite in this case. Thus we will assume
that the genus of $B$ is at most~$1$.

Suppose that $\phi$ has a fiber $F$ such that $F_{\red}$
is not a smooth elliptic curve. Then every irreducible component of $F$
is a rational curve, see e.g.~\mbox{\cite[\S\,V.7]{BHPV-2004}}.
Hence Lemma~\ref{lemma:rational-curve} applied to the set of irreducible components
of fibers of the morphism~$\phi$ shows
that the group~\mbox{$\Aut(X)$} is Jordan.

Therefore, we will assume that all (set-theoretic) fibers of~$\phi$ are smooth
elliptic curves. Then the topological Euler characteristic~\mbox{$\chit(X)$}
equals~$0$. By the Noether's formula one has
\[
\chi(\OOO_X)=\frac{1}{12}\left(\cc_1(X)^2+\chit(X)\right)=0.
\]
By the canonical bundle formula
we have
\[
\KKK_X\sim\phi^*\left(\KKK_B\otimes\mathcal{L}\right)\otimes\OOO_X\left(\sum (m_i-1) F_i\right),
\]
where $F_i$ are all (reduced) multiple fibers of $\phi$, the fiber
$F_i$ has multiplicity $m_i$, and $\mathcal{L}$ is a line bundle
of degree $\chi(\OOO_X)=0$.
Since $X$ has Kodaira dimension~$1$,
we see that
\begin{equation}\label{eq:mult-fibers}
2g(B)-2+\sum (1-1/m_i)=\deg \left(\KKK_B\otimes\mathcal{L}\right)+ \sum (1-1/m_i)> 0.
\end{equation}

Suppose that $B$ is an elliptic curve, so that $g(B)=1$.
Then~\eqref{eq:mult-fibers} implies that $\phi$ has at least one
multiple fiber. This means that $\Gamma$ has a finite non-empty
invariant subset in $B$, so that $\Gamma$ is finite.

Now suppose that $B$ is a rational curve, so that $g(B)=0$.
Then~\eqref{eq:mult-fibers} implies that $\phi$ has at least three
multiple fibers, cf. the proof of Lemma~\ref{lemma:secondary-Kodaira-surface}.
This means that $\Gamma$ has a finite non-empty
invariant subset in $B$ that consists of at least three points.
Therefore, $\Gamma$ is finite in this case as well.
\end{proof}

Finally, we consider the case of Kodaira dimension $0$.
The following result is well known.

\begin{theorem}
\label{theorem:torus}
Let $X=\CC^n/\Lambda$ be a complex torus. Then
\begin{equation}\label{eq:complex-torus}
\Aut(X)\cong\left( \CC^n/\Lambda\right)\rtimes \Gamma,
\end{equation}
where $\Gamma$ is isomorphic to the stabilizer of the lattice $\Lambda$ in
$\GL_n(\CC)$.
\end{theorem}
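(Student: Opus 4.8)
The plan is to pin down the stabiliser of the origin inside $\Aut(X)$ and then reconstruct the whole group as a semi-direct product. Write $T=\CC^n/\Lambda$ for the group $X$ acting on itself by translations: for $p\in X$ put $t_p\colon x\mapsto x+p$, so that $p\mapsto t_p$ is an injective homomorphism $T\hookrightarrow\Aut(X)$. Any $f\in\Aut(X)$ can be written $f=t_{f(0)}\comp\bigl(t_{-f(0)}\comp f\bigr)$, and $t_{-f(0)}\comp f$ lies in the stabiliser $\Aut(X)_0$ of the point $0\in X$; moreover $T\cap\Aut(X)_0=\{1\}$ since a nontrivial translation has no fixed point. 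Hence $\Aut(X)=T\cdot\Aut(X)_0$, and the problem is reduced to analysing $\Aut(X)_0$.

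Next I would identify $\Aut(X)_0$ with the lattice stabiliser $\Gamma=\{M\in\GL_n(\CC):M\Lambda=\Lambda\}$. One direction is immediate: such an $M$ is a $\CC$-linear biholomorphism of $\CC^n$ preserving $\Lambda$, so it descends to an automorphism of $X$ fixing $0$, giving an injective homomorphism $\Gamma\to\Aut(X)_0$. For the converse, take $f\in\Aut(X)_0$ and lift it through the universal covering $\pi\colon\CC^n\to X$ to a biholomorphism $\tilde f\colon\CC^n\to\CC^n$ with $\tilde f(0)=0$. For each $\lambda\in\Lambda$ the map $\tilde f\comp t_{\lambda}\comp\tilde f^{-1}$ covers the identity of $X$ (because $t_\lambda$ does), hence is a deck transformation of $\pi$, i.e.\ a translation $t_{\phi(\lambda)}$ by some $\phi(\lambda)\in\Lambda$; the assignment $\lambda\mapsto\phi(\lambda)$ is a group automorphism of $\Lambda$, and equivalently
\[
\tilde f(z+\lambda)=\tilde f(z)+\phi(\lambda)\qquad(z\in\CC^n,\ \lambda\in\Lambda).
\]
Differentiating this relation shows that every entry of the Jacobian of $\tilde f$ is $\Lambda$-periodic, hence descends to a holomorphic function on the compact manifold $X$ and is therefore constant; thus $\tilde f$ is affine, and $\tilde f(0)=0$ forces $\tilde f(z)=Mz$ for a unique $M\in\GL_n(\CC)$. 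Plugging $\tilde f(z)=Mz$ back into the displayed relation gives $M\lambda=\phi(\lambda)\in\Lambda$ for all $\lambda$, and since $\phi$ is surjective we get $M\Lambda=\Lambda$, i.e.\ $M\in\Gamma$. So $\Aut(X)_0\cong\Gamma$.

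Finally I would assemble the semi-direct product. By the identification just made, an element of $\Aut(X)_0$ corresponding to $M\in\Gamma$ conjugates $t_p$ to $t_{Mp}$, so $\Aut(X)_0$ normalises $T$; as $T$ trivially normalises itself and $\Aut(X)=T\cdot\Aut(X)_0$, this gives $T\trianglelefteq\Aut(X)$. Together with $T\cap\Aut(X)_0=\{1\}$ we obtain $\Aut(X)\cong T\rtimes\Gamma\cong(\CC^n/\Lambda)\rtimes\Gamma$, the action of $\Gamma$ on $\CC^n/\Lambda$ being the one induced by the linear action of $\GL_n(\CC)$ on $\CC^n$; this is precisely~\eqref{eq:complex-torus}. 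The only non-formal step is the analytic one in the previous paragraph --- that the lift $\tilde f$ normalises $\Lambda$ and is consequently affine via the Liouville-type periodicity argument; everything else is routine bookkeeping with the universal cover, and I expect no serious obstacle.
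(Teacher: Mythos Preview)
Your proof is correct and follows essentially the same route as the paper: decompose $\Aut(X)$ into translations times the stabiliser of $0$, lift elements of the stabiliser to biholomorphisms of $\CC^n$ fixing the origin, and use the $\Lambda$-periodicity of the Jacobian together with Liouville's theorem to force linearity. The paper is terser about the semi-direct product bookkeeping and phrases the periodicity step via the map $z\mapsto g(z+\lambda)-g(z)$ taking values in the discrete set $\Lambda$, but the substance is identical.
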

\begin{proof}
The proof is standard,
but we include it for the reader's convenience.
Let $\Gamma$ be the stabilizer of the point $0\in X$.
Then the decomposition~\eqref{eq:complex-torus} holds,
and it remains to prove that $\Gamma$ is isomorphic to the stabilizer of the lattice $\Lambda$ in
$\GL_n(\CC)$.

Since $\CC^n$ is the universal cover of~$X$,
there is an embedding $\Gamma\hookrightarrow\Aut(\CC^n)$,
and there is a point in $\Lambda$ that is invariant with respect to $\Gamma$.
We may assume that this is the origin in $\CC^n$.

Let $g$ be an element of $\Gamma$. One has $g(\Lambda)=\Lambda$.
We claim that $g\in\GL_n(\CC)$.
Indeed, let~$\lambda$ be an arbitrary element of the lattice $\Lambda$.
Consider a holomorphic map
$$
f_{\lambda}\colon\CC^n\to\CC^n,\quad f_{\lambda}(z)=g(z+\lambda)-g(z).
$$
One has $f_{\lambda}(z)\in\Lambda$ for every $z\in\CC^n$. This means that
$f_{\lambda}(z)$ is constant, so that all partial derivatives of $f_{\lambda}$
vanish. Hence the partial derivatives of $g(z)$ are periodic with respect to the lattice $\Lambda$.
This in turn means that these partial derivatives are bounded and thus constant, so that~$g(z)$ is a linear function in~$z$.
\end{proof}

\begin{remark}
A complete classification of finite groups that can act by automorphisms of a two-dimensional
complex torus (preserving a point therein) was obtained in~\cite{Fujiki}.
\end{remark}

Theorem~\ref{theorem:torus} immediately implies the following result (which is already known in a more general setup, see~\cite[Theorem~1.4]{Riera-SomeMfds}, \cite[Corollary~1.7]{Ye2017}).

\begin{corollary}
\label{corollary:torus}
Let $X$ be a complex torus. Then the group $\Aut(X)$ is Jordan.
\end{corollary}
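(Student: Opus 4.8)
The plan is to read off the structure of $\Aut(X)$ from Theorem~\ref{theorem:torus} and then feed it into the group-theoretic machinery of \S\ref{section:Jordan}. Write $X=\CC^n/\Lambda$ (we only need $n=2$, but the argument is insensitive to the dimension). Theorem~\ref{theorem:torus} provides a split exact sequence
\[
1\longrightarrow \CC^n/\Lambda \longrightarrow \Aut(X)\longrightarrow \Gamma\longrightarrow 1,
\]
where $\Gamma$ is the stabilizer of the lattice $\Lambda$ in $\GL_n(\CC)$.

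The first step is to observe that $\Gamma$ has bounded finite subgroups. Every element of $\Gamma$ preserves $\Lambda$ and, since $\Lambda$ spans $\CC^n$ over $\CC$, is determined by its action on $\Lambda\cong\ZZ^{2n}$. This gives an embedding $\Gamma\hookrightarrow\GL_{2n}(\ZZ)\subset\GL_{2n}(\QQ)$, and by Minkowski's Theorem~\ref{theorem:Minkowski} the group $\GL_{2n}(\QQ)$ has bounded finite subgroups; hence so does $\Gamma$.

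The second step is that the kernel $\CC^n/\Lambda$ is abelian, and therefore Jordan. (In fact its torsion subgroup is isomorphic to $(\QQ/\ZZ)^{2n}$, so Lemma~\ref{lemma:diagonal-quotient-Z-1} applied with $m=0$ shows it is even strongly Jordan, which combined with Lemma~\ref{lemma:group-theory}(i) would give that $\Aut(X)$ is strongly Jordan.) Applying Lemma~\ref{lemma:group-theory}(i) to the exact sequence above — a Jordan normal subgroup whose quotient has bounded finite subgroups — we conclude that $\Aut(X)$ is Jordan.

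There is essentially no obstacle here: the geometric content has already been isolated in Theorem~\ref{theorem:torus}, and the rest is bookkeeping with the lemmas of \S\ref{section:Jordan}. The only point one must not overlook is that the point stabilizer $\Gamma$, although defined as a subgroup of $\GL_n(\CC)$, becomes a group of integral matrices once one records its action on the lattice $\Lambda$, which is precisely what makes Minkowski's theorem available.
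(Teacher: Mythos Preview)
Your proof is correct and follows essentially the same route as the paper: use the semidirect product structure from Theorem~\ref{theorem:torus}, observe that the kernel $\CC^n/\Lambda$ is abelian (hence Jordan), embed $\Gamma$ into $\GL_{2n}(\ZZ)\subset\GL_{2n}(\QQ)$ via its action on $\Lambda$ to invoke Minkowski's Theorem~\ref{theorem:Minkowski}, and conclude with Lemma~\ref{lemma:group-theory}(i). The parenthetical remark on strong Jordanness is a harmless bonus not present in the paper's terse version.
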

\begin{proof}
By Lemma~\ref{lemma:group-theory}(i) it is enough to check that in the notation
of Theorem~\ref{theorem:torus} the group $\Gamma$ has bounded finite subgroups.
Since $\Gamma$ is a subgroup in the automorphism group of $\Lambda$, the latter
follows from Theorem~\ref{theorem:Minkowski}.
\end{proof}

\begin{lemma}\label{lemma:K3-Enriques}
Let $X$ be either a $K3$ surface, or an Enriques surface.
Then the group~\mbox{$\Aut(X)$} has bounded finite subgroups.
\end{lemma}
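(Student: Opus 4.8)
The plan is to reduce both cases to the fact that a lattice automorphism group is finite-modulo-boundedness via Theorem~\ref{theorem:Minkowski}, after controlling the kernel of the action on cohomology. First I would treat the $K3$ case. A $K3$ surface $X$ is projective or not, but in any case $H^2(X,\ZZ)$ is a free abelian group of rank $22$ carrying a nondegenerate intersection form, and the action of $\Aut(X)$ on $H^2(X,\ZZ)$ lands in the orthogonal group of this lattice, hence in $\GL_{22}(\ZZ)$; by Theorem~\ref{theorem:Minkowski} the image of $\Aut(X)$ in $\GL(H^2(X,\ZZ))$ has bounded finite subgroups. So by Lemma~\ref{lemma:group-theory}(i) it suffices to bound the finite subgroups of the kernel $\overline{\Aut}(X)$ of this action. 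The key input is that for a $K3$ surface this kernel is actually trivial: an automorphism acting trivially on $H^2(X,\ZZ)$ acts trivially on $H^{2,0}(X)=\CC\omega_X$, hence preserves the holomorphic symplectic form, and by the global Torelli theorem (or, more elementarily, by a fixed-point/holomorphic Lefschetz argument as in Lemma~\ref{lemma:no-even-order}) it must be the identity. Thus $\Aut(X)$ embeds into $\GL_{22}(\ZZ)$ and has bounded finite subgroups.

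For the Enriques case I would pass to the universal cover, which is a $K3$ surface $\widetilde X$ with $X=\widetilde X/\iota$ for a fixed-point-free involution $\iota$. Any automorphism $g$ of $X$ lifts to an automorphism $\tilde g$ of $\widetilde X$ normalizing $\langle\iota\rangle$, giving an exact sequence
\[
1\longrightarrow\ZZ/2\ZZ\longrightarrow\widetilde{\Aut}(X)\longrightarrow\Aut(X)\longrightarrow 1,
\]
where $\widetilde{\Aut}(X)\subset\Aut(\widetilde X)$ is the normalizer of $\iota$. Since $\Aut(\widetilde X)$ has bounded finite subgroups by the $K3$ case, so does its subgroup $\widetilde{\Aut}(X)$; and then $\Aut(X)$, being a quotient of $\widetilde{\Aut}(X)$ by a group of order $2$, has finite subgroups of order at most $2B(\widetilde{\Aut}(X))$, hence bounded finite subgroups. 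Alternatively, and perhaps cleaner, one argues directly on $X$: the torsion-free part of $H^2(X,\ZZ)$ is the Enriques lattice of rank $10$, the full $H^*(X,\ZZ)$ is finitely generated, so $\Aut(X)/\overline{\Aut}(X)$ has bounded finite subgroups by Theorem~\ref{theorem:Minkowski}, and one checks that $\overline{\Aut}(X)$ is finite (indeed trivial or of order $2$) because an automorphism acting trivially on cohomology lifts to one acting trivially on $H^2(\widetilde X,\ZZ)$, which is the identity on $\widetilde X$ by the $K3$ case.

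The main obstacle is the triviality (or at least finiteness) of $\overline{\Aut}(X)$ for a $K3$ surface: this is the one place where a genuine input beyond Minkowski's theorem is needed. If one does not wish to invoke the global Torelli theorem, the safest route is the holomorphic Lefschetz fixed point formula together with the contradiction with Lemma~\ref{lemma-b2-fixed-points}-type counting, exactly in the spirit already used in Lemma~\ref{lemma:no-even-order}; but since a $K3$ surface does contain curves this needs a little care, so I expect the cleanest write-up simply cites the standard fact that $\overline{\Aut}(X)=\{1\}$ for a $K3$ surface. Everything else is a routine application of Lemma~\ref{lemma:group-theory}(i) and Theorem~\ref{theorem:Minkowski}.
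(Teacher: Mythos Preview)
Your argument is correct and takes a genuinely different route from the paper. The paper does not analyze $\overline{\Aut}(X)$ at all: for projective $K3$ and Enriques surfaces it simply cites \cite[Theorem~1.8(i)]{Prokhorov-Shramov-2013}, and for non-projective $K3$ surfaces it invokes Theorem~\ref{theorem:class-VII} (since $\chit(X)=24\neq 0$) or \cite[Theorem~1.5]{Oguiso08}. Your approach, by contrast, stays within the classical theory of $K3$ surfaces: you use the injectivity of $\Aut(X)\to\GL\big(H^2(X,\ZZ)\big)$ (a consequence of the global Torelli theorem) together with Theorem~\ref{theorem:Minkowski}, and then deduce the Enriques case from the $K3$ case via the universal cover. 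This buys independence from the external references and from the machinery of \S\ref{section:class-VII}, at the cost of importing Torelli. One small caveat: in your \emph{alternative} Enriques argument, the claim that a cohomologically trivial automorphism of $X$ lifts to one acting trivially on all of $H^2(\widetilde X,\ZZ)$ is not immediate, since the lift a priori only acts trivially on the $\iota$-invariant part; your first argument via the exact sequence $1\to\ZZ/2\ZZ\to\widetilde{\Aut}(X)\to\Aut(X)\to 1$ is the one that works without further input.
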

\begin{proof}
Suppose that $X$ is a $K3$ surface.
If $X$ is projective, then the assertion follows from \cite[Theorem~1.8(i)]{Prokhorov-Shramov-2013}.
If~$X$ is non-projective, then the assertion follows
from a stronger result of~\cite[Theorem~1.5]{Oguiso08}.

Now suppose that $X$ is an Enriques surface. Then it is projective (see Theorem~\ref{theorem:classification}),
so that the assertion again follows from~\cite[Theorem~1.8(i)]{Prokhorov-Shramov-2013}.
\end{proof}

Note that in the assumptions of Lemma~\ref{lemma:K3-Enriques},
the (weaker) assertion that the group~\mbox{$\Aut(X)$} is Jordan follows directly
from Theorem~\ref{theorem:class-VII} or Theorem~\ref{theorem:Riera}.

\begin{lemma}\label{lemma:bielliptic}
Let $X$ be a bielliptic surface. Then the group $\Aut(X)$ is Jordan.
\end{lemma}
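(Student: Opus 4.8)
The plan is to use the structure theory of bielliptic surfaces. Recall that a bielliptic (hyperelliptic) surface $X$ can be written as a quotient $(E_1 \times E_2)/F$, where $E_1$ and $E_2$ are elliptic curves and $F$ is a finite abelian group acting on $E_1$ by translations and on $E_2$ faithfully (so that $E_2/F \cong \P^1$), with $F$ being one of the seven groups in the classical Bagnera--de Franchis list. The two natural projections give two $\Aut(X)$-related fibrations: the map $X \to E_2/F \cong \P^1$ whose fibers are copies of $E_1$, and the map $\alpha\colon X \to E_1/F$ which is the Albanese fibration, with $E_1/F$ an elliptic curve and fibers isomorphic to $E_2$. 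The Albanese map is canonical, hence $\Aut(X)$-equivariant, giving an exact sequence
\[
1 \longrightarrow \Aut(X)_{\alpha} \longrightarrow \Aut(X) \longrightarrow \Gamma \longrightarrow 1,
\]
where $\Aut(X)_{\alpha}$ acts fiberwise on the elliptic fibration $\alpha$ and $\Gamma \subset \Aut(E_1/F)$.

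First I would handle $\Aut(X)_{\alpha}$: since it acts on a typical fiber, a smooth elliptic curve $E_2$, by automorphisms, every finite subgroup has a normal abelian subgroup of index at most $6$ (the same argument as in the proofs of Lemmas~\ref{lemma:secondary-Kodaira-surface} and~\ref{lemma:Kodaira-dimension-1}), so $\Aut(X)_{\alpha}$ is Jordan. By Lemma~\ref{lemma:group-theory}(i), it then suffices to show $\Gamma$ has bounded finite subgroups. Since $E_1/F$ is an elliptic curve, $\Aut(E_1/F)$ is an extension of a finite group (of order dividing $24$, coming from the complex multiplication/automorphisms fixing the origin) by the translation group $E_1/F$. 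So I need to bound finite subgroups of $\Gamma$ that lie inside the translation part, i.e. I need a finite $\Aut(X)$-invariant subset of the base elliptic curve $E_1/F$, or some other mechanism forcing the image of finite subgroups in the translations to be bounded.

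The key point here is that the \emph{second} fibration $X \to \P^1$ with fibers $E_1$ is also canonically defined (for instance, it is the fibration whose fibers are the images of the $\{pt\}\times E_2$ — no wait, of the $E_1 \times \{pt\}$; in any case it is intrinsic as it is the unique fibration over a rational curve, or equivalently it is determined by the Néron--Severi lattice). Hence $\Aut(X)$ also acts on $\P^1$, and since the multiple fibers of this fibration (which exist because $E_2/F = \P^1$ has the quotient orbifold structure with $3$ or $4$ orbifold points, exactly as in~\eqref{eq:mult-fibers}) form a finite non-empty $\Aut(X)$-invariant subset of $\P^1$, the image of $\Aut(X)$ in $\Aut(\P^1) \cong \PGL_2(\CC)$ has bounded finite subgroups. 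Combining the two fibrations: an element acting trivially on both bases acts by translation along fibers in two transverse directions, hence lies in a group isomorphic to a quotient of $E_1 \times E_2$ (in fact the connected component $\Aut^0(X)$, which is a complex torus or an extension thereof), and the action of $\Gamma$ on this torus-like group is through the finite group of lattice automorphisms; bounding finite subgroups of this combined kernel then reduces, via Theorem~\ref{theorem:Minkowski} applied to the action on $H^*(X,\ZZ)$ together with Corollary~\ref{corollary:torus}-type reasoning on $\Aut^0(X)$, to the torus case.

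The main obstacle I anticipate is bookkeeping the precise structure of $\Aut(X)$ for a bielliptic surface — in particular identifying $\Aut^0(X)$ and showing that finite subgroups of the group of fiberwise-on-$\alpha$, fiberwise-on-$X\to\P^1$ automorphisms are bounded. A cleaner route, which I would actually pursue, is to invoke Theorem~\ref{theorem:Minkowski}: pass to $\overline{\Aut}(X)$, the subgroup acting trivially on $H^*(X,\ZZ)$, which has finite index; then $\overline{\Aut}(X)$ preserves the Albanese fibration and acts trivially on the base homology, so it acts on $E_1/F$ by translations. An element of $\overline{\Aut}(X)$ acting as a nontrivial translation on $E_1/F$ still permutes the finitely many multiple fibers of the \emph{other} fibration $X\to\P^1$, but being in $\overline{\Aut}(X)$ it fixes each of them, hence fixes each of those finitely many fibers $\cong E_1$ setwise; restricting to one such fiber shows the element acts there as a translation of $E_1$, and tracking this gives an embedding of a finite subgroup $G \subset \overline{\Aut}(X)$ into $(E_1/F) \times \mathrm{Aut}$-data in a way controlled by Corollary~\ref{corollary:torus}. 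Either way, after peeling off the fiberwise parts and the homology action, the residual group is abelian-by-(bounded finite), so $G$ has a normal abelian subgroup of bounded index. Hence $\Aut(X)$ is Jordan.
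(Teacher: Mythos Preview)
Your approach is vastly more elaborate than what the paper does. The paper's proof is a single line: a bielliptic surface is projective (Theorem~\ref{theorem:classification}), and a bielliptic surface has Kodaira dimension~$0$, hence is not birational to $E\times\P^1$; therefore Theorem~\ref{theorem:Popov} (or any of the cited projective results such as~\cite{MengZhang}) already gives that $\Bir(X)$, and in particular $\Aut(X)$, is Jordan. No structure theory of bielliptic surfaces is invoked at all.

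Your direct structural argument, by contrast, unpacks the Bagnera--de~Franchis description and juggles the two elliptic fibrations. The outline is reasonable and can be made to work, but as written it is not complete. One concrete gap: in your ``cleaner route'' you assert that an element of $\overline{\Aut}(X)$ must fix each multiple fiber of the fibration $X\to\P^1$ setwise, because it acts trivially on $H^*(X,\ZZ)$. That does not follow: all fibers of $X\to\P^1$ are cohomologous, so triviality on $H^2(X,\ZZ)$ does not prevent permuting the multiple fibers. What you actually need is the (easy) fact that the image of $\Aut(X)$ in $\Aut(\P^1)\cong\PGL_2(\CC)$ has bounded finite subgroups because it preserves the set of $3$ or $4$ orbifold points; together with Lemma~\ref{lemma:group-theory}(i) applied to the product map $\Aut(X)\to\Aut(E_1/F)\times\Aut(\P^1)$ and an analysis of its kernel (which is the group of simultaneous fiberwise translations, an abelian group), this would finish. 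But once you realize you are doing a case of Theorem~\ref{theorem:Popov} by hand, the natural move is simply to cite it.
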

\begin{proof}
The surface $X$ is projective (see Theorem~\ref{theorem:classification}).
Thus the assertion follows from
Theorem~\ref{theorem:Popov} (or~\cite{BandmanZarhin2015}, or~\cite{MengZhang},
or \cite[Theorem~1.8(ii)]{Prokhorov-Shramov-2013}).
\end{proof}

For a more precise description of automorphism groups of bielliptic surfaces,
we refer the reader to~\cite{BennettMiranda}.

\begin{corollary}\label{corollary:Kodaira-dimension-0}
Let $X$ be a minimal surface of Kodaira dimension $0$.
Then the group~\mbox{$\Aut(X)$}
is Jordan.
\end{corollary}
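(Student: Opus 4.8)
The plan is to apply the Enriques--Kodaira classification (Theorem~\ref{theorem:classification}) to reduce the statement to the cases already handled in this section, together with the results of~\S\ref{section:Kodaira}. A minimal compact complex surface $X$ of Kodaira dimension $0$ is, by Theorem~\ref{theorem:classification}, one of the following: a complex torus, a $K3$ surface, an Enriques surface, a bielliptic surface, a primary Kodaira surface, or a secondary Kodaira surface. In each case the group $\Aut(X)$ has already been shown to be Jordan.

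First I would dispatch the torus case by Corollary~\ref{corollary:torus}, and the $K3$ and Enriques cases by Lemma~\ref{lemma:K3-Enriques} (which in fact gives the stronger statement that $\Aut(X)$ has bounded finite subgroups). Next, the bielliptic case follows from Lemma~\ref{lemma:bielliptic}. Finally, the two types of Kodaira surfaces are covered by Corollary~\ref{corollary:Kodaira-surface} (equivalently, by Lemmas~\ref{lemma:primary-Kodaira-surface} and~\ref{lemma:secondary-Kodaira-surface}). Since this list is exhaustive by Theorem~\ref{theorem:classification}, the corollary follows.

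There is no real obstacle here: the corollary is purely a bookkeeping summary that collects the case-by-case analysis carried out earlier in the section (and in~\S\ref{section:Kodaira}) under the single umbrella of ``Kodaira dimension $0$''. The only point requiring a moment's care is to make sure the enumeration of surface types is read off correctly from the table in Theorem~\ref{theorem:classification} — in particular that the six types listed there (tori, $K3$, Enriques, bielliptic, primary and secondary Kodaira surfaces) are precisely the minimal surfaces with $\varkappa(X)=0$, and that each has been treated. No new estimates or constructions are needed.
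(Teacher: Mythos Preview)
Your proposal is correct and follows essentially the same approach as the paper: both invoke Theorem~\ref{theorem:classification} to enumerate the minimal surfaces with $\varkappa(X)=0$ and then cite Corollary~\ref{corollary:torus}, Lemma~\ref{lemma:K3-Enriques}, Lemma~\ref{lemma:bielliptic}, and Corollary~\ref{corollary:Kodaira-surface} for the respective cases. As you observe, this corollary is just a bookkeeping summary of the preceding results.
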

\begin{proof}
We know from Theorem~\ref{theorem:classification} that $X$ is either a complex torus, or a $K3$
surface, or an Enriques surface, or a bielliptic surface, or a Kodaira surface.

If $X$ is a complex torus, then the assertion holds by
Corollary~\ref{corollary:torus}.
If $X$ is a $K3$ surface or an Enriques surface, then the assertion
is implied by Lemma~\ref{lemma:K3-Enriques}.
If $X$ is a bielliptic surface, then the assertion holds
by Lemma~\ref{lemma:bielliptic}.
If $X$ is a Kodaira surface, then the assertion holds by
Theorem~\ref{theorem:IK} and Lemma~\ref{lemma:secondary-Kodaira-surface}.
\end{proof}

\begin{proposition}\label{proposition:Aut-minimal}
Let $X$ be a minimal surface. Then the group $\Aut(X)$ is Jordan.
\end{proposition}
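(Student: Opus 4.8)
The plan is to run through the Enriques--Kodaira classification (Theorem~\ref{theorem:classification}) according to the value of $\varkappa(X)$. If $\varkappa(X)=2$, then $\Aut(X)$ is finite by Theorem~\ref{theorem:general-type}, hence Jordan. If $\varkappa(X)=1$, this is Lemma~\ref{lemma:Kodaira-dimension-1}, and if $\varkappa(X)=0$, it is Corollary~\ref{corollary:Kodaira-dimension-0}. So I may assume $\varkappa(X)=-\infty$, in which case $X$ is a rational surface, a ruled surface over a curve of genus $g\ge 1$, or a minimal surface of class~VII; the last case is covered by Corollary~\ref{corollary:class-VII}.

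It remains to deal with rational and ruled surfaces, both of which are projective (Theorem~\ref{theorem:classification}). If $X$ is minimal and rational, then $X$ is $\P^2$ or a Hirzebruch surface $\mathbb{F}_n$ with $n\neq 1$, and $\Aut(X)$ is a linear algebraic group, hence Jordan by Theorem~\ref{theorem:Jordan}. Now let $X$ be a minimal ruled surface with ruling $\pi\colon X\to C$, where $g(C)\ge 1$. Since $g(C)\ge 1$, the ruling is the unique $\P^1$-fibration on $X$ (it is recovered from the Albanese morphism), so it is $\Aut(X)$-equivariant, and there is an exact sequence
$$
1\longrightarrow\Aut_\pi(X)\longrightarrow\Aut(X)\longrightarrow\Aut(C),
$$
where $\Aut_\pi(X)$ is the subgroup acting trivially on $C$. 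The group $\Aut_\pi(X)$ consists of the $\CC$-points of the automorphism group scheme of the $\P^1$-bundle $\pi$, hence it is a linear algebraic group and therefore Jordan by Theorem~\ref{theorem:Jordan}. If $g(C)\ge 2$, then $\Aut(C)$ is finite, and $\Aut(X)$ is Jordan by Lemma~\ref{lemma:group-theory}(i).

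The remaining case --- a ruled surface $\pi\colon X\to C$ over an elliptic curve $C$ --- is the only genuinely delicate one: here the image of $\Aut(X)$ in $\Aut(C)$ need not have finite (let alone bounded) finite subgroups, and indeed $X$ is birational to $C\times\P^1$, so $\Bir(X)$ fails to be Jordan by Theorem~\ref{theorem:Popov}; the point is that we work with $\Aut$ rather than $\Bir$. Since $X$ is projective, the Jordan property for $\Aut(X)$ follows from~\cite{MengZhang}. Should one prefer a self-contained argument, write $X=\P(\mathcal{E})$ and distinguish two cases. Either only finitely many translations of $C$ lift to automorphisms of $X$, so that the image of $\Aut(X)$ in $\Aut(C)$ is finite and one concludes by Lemma~\ref{lemma:group-theory}(i) as above; or $\mathcal{E}$ is homogeneous, in which case, after a twist, $X$ is $C\times\P^1$ --- with $\Aut(X)\cong\Aut(C)\times\PGL_2(\CC)$, a product of Jordan groups --- or $X=\P(\mathcal{O}_C\oplus M)$ with $M$ a non-trivial degree-zero line bundle, or $X=\P(\mathcal{F})$ for the non-split rank-two self-extension $\mathcal{F}$ of $\mathcal{O}_C$; in the latter two cases $\Aut^0(X)$ is an extension of the abelian variety $C$ by a commutative linear algebraic group, hence commutative, while $\Aut(X)/\Aut^0(X)$ is finite, so every finite subgroup of $\Aut(X)$ has an abelian normal subgroup of bounded index. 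Thus the main work, if one wishes to avoid citing~\cite{MengZhang}, lies precisely in this elliptic-base analysis, which rests on the classification of vector bundles on elliptic curves.
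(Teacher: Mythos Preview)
Your proof is correct and follows essentially the same case-by-case route through the Enriques--Kodaira classification as the paper, invoking the same results for class~VII and for $\varkappa(X)\ge 0$. The only difference is cosmetic: for rational and ruled surfaces the paper simply notes that $X$ is projective and cites \cite[Corollary~1.6]{Zarhin2015} or \cite{MengZhang} uniformly, whereas you unpack the rational and genus $\ge 2$ ruled cases by hand (via Theorem~\ref{theorem:Jordan} and the exact sequence for the ruling) before falling back on \cite{MengZhang} for the elliptic-base case --- which is exactly what the paper does there as well.
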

\begin{proof}
We check the possibilities for the birational type of $X$ listed in
Theorem~\ref{theorem:classification} case by case.
If $X$ is rational or ruled,
then $X$ is projective (see Theorem~\ref{theorem:classification}),
and thus the group~\mbox{$\Aut(X)$} is Jordan by~\cite[Corollary~1.6]{Zarhin2015} or \cite{MengZhang}.
If $X$ is a surface of class~VII, then the group~\mbox{$\Aut(X)$} is Jordan by Corollary~\ref{corollary:class-VII}.
If the Kodaira dimension of~$X$ is~$0$, then
the group $\Aut(X)$ is Jordan by
Corollary~\ref{corollary:Kodaira-dimension-0}.
If the Kodaira dimension of~$X$ is~$1$, then
the group $\Aut(X)$ is Jordan by
Lemma~\ref{lemma:Kodaira-dimension-1}.
Finally, if the Kodaira dimension of~$X$ is~$2$, then
the group $\Aut(X)$ is finite by Theorem~\ref{theorem:general-type}.
\end{proof}

Now we are ready to prove Theorem~\ref{theorem:Aut}.

\begin{proof}[Proof of Theorem~\textup{\ref{theorem:Aut}}]
If $X$ is rational or ruled,
then $X$ is projective (see Theorem~\ref{theorem:classification}),
and thus the group $\Aut(X)$ is Jordan by~\cite{BandmanZarhin2015} or~\cite{MengZhang}.
Otherwise Proposition~\ref{proposition:Bir-vs-Aut} implies that there is a unique minimal
surface $X'$ birational to $X$, and
$$
\Aut(X)\subset\Bir(X)\cong\Bir(X')=\Aut(X').
$$
Now the assertion follows from Proposition~\ref{proposition:Aut-minimal}.
\end{proof}

Finally, we are going to prove Theorem~\textup{\ref{theorem:main}}.

\begin{proof}[Proof of Theorem~\textup{\ref{theorem:main}}]
There always exists a minimal surface birational to a given one, so we may assume that $X$ is a minimal surface
itself.

If $X$ is rational, then the group $\Bir(X)$ is Jordan
by Theorem~\ref{theorem:Serre}.
Also, by \cite[Theorem~4.2]{ProkhorovShramov-RC}
and Corollary~\ref{corollary:fixed-point}
every finite subgroup of $\Bir(X)$ contains a subgroup of bounded index that
can be embedded into $\GL_2(\CC)$. Hence every finite subgroup of~\mbox{$\Bir(X)$} can be generated by
a bounded number of elements.

If $X$ is ruled and non-rational, let $\phi\colon X\to B$ be the $\PP^1$-fibration over a
(smooth) curve. Since~$X$ is projective (see Theorem~\ref{theorem:classification}), the group
$\Bir(X)$ is Jordan if and only if $B$ is not an elliptic curve by
Theorem~\ref{theorem:Popov}.
Moreover, we always have an exact sequence of groups
$$
1\to\Bir(X)_\phi\to \Bir(X)\to\Aut(B),
$$
where the action of the subgroup $\Bir(X)_\phi$ is fiberwise with respect
to $\phi$. In particular, the group~\mbox{$\Bir(X)_\phi$} acts faithfully on the schematic general fiber of $\phi$,
which is a conic over the field~\mbox{$\CC(B)$}. This implies that finite subgroups
of $\Bir(X)_\phi$ are generated by a bounded number of elements. Also, finite subgroups
of $\Aut(B)$ are generated by a bounded number of elements.
Therefore, the same holds for finite subgroups
of $\Bir(X)$ as well.

In the remaining cases
we have $\Bir(X)=\Aut(X)$ by Proposition~\ref{proposition:Bir-vs-Aut}, so
the assertion follows from Proposition~\ref{proposition:Aut-minimal} and
Theorem~\ref{theorem:Riera-generators}.
\end{proof}


\def\cprime{$'$}

\end{document}